\documentclass[a4paper, 10pt]{article}
\usepackage{graphicx,grffile}
\usepackage{amsmath, amssymb, amsthm}
\usepackage{caption}
\usepackage{subfig}
\usepackage{diagbox}
\usepackage{xcolor}
\usepackage{grffile}
\usepackage{mathrsfs}
\usepackage{multirow}
\usepackage[normalem]{ulem}
\usepackage{bm}
\usepackage{enumerate}

\usepackage{indentfirst} 
\setlength{\parindent}{1em}%

\allowdisplaybreaks[4]

\newtheorem{theorem}{Theorem}
\newtheorem{lemma}[theorem]{Lemma}
\newtheorem{proposition}[theorem]{Proposition}
\newtheorem{definition}[theorem]{Definition}
\numberwithin{theorem}{section}

\newcommand\bx{\boldsymbol{x}}

\newcommand\bbR{\mathbb{R}}

\newcommand\dd{\,\mathrm{d}}

\newcommand\pd[2]{\dfrac{\partial {#1}}{\partial {#2}}}

\newcommand{\sP}{\mathscr{P}}

\numberwithin{equation}{section}

\setlength{\oddsidemargin}{0cm} 
\setlength{\evensidemargin}{0cm}
\addtolength{\voffset}{-10mm}
\setlength{\textwidth}{160mm}
\setlength{\textheight}{230mm}

\graphicspath{{images/}}

{\theoremstyle{remark} }

\newcommand\deletei{\bgroup\markoverwith{\textcolor{blue}{\rule[0.5ex]{2pt}{2pt}}}\ULon}

\title{$Q$-tensor gradient flow with quasi-entropy and discretizations preserving physical constraints\footnote{Yanli Wang is partially supported by NSFC No. 12171026,  U1930402 and 12031013. 
Jie Xu is partially supported by NSFC No. 12001524. }}

\author{Yanli Wang\footnote{Beijing Computational Science Research Center, Beijing, China (ylwang@csrc.ac.cn)}, 
Jie Xu\footnote{LSEC and NCMIS, Institute of Computational Mathematics and Scientific/Engineering Computing (ICMSEC), Academy of Mathematics and Systems Science (AMSS), Chinese Academy of Sciences, Beijing, China (xujie@lsec.cc.ac.cn)}
}

\begin{document}
\date{}
\maketitle

\begin{abstract}
We propose and analyze numerical schemes for the gradient flow of $Q$-tensor with the quasi-entropy. 
The quasi-entropy is a strictly convex, rotationally invariant elementary function, giving a singular potential constraining the eigenvalues of $Q$ within the physical range $(-1/3,2/3)$. 
Compared with the potential derived from the Bingham distribution, the quasi-entropy has the same asymptotic behavior and underlying physics. 
Meanwhile, it is very easy to evaluate because of its simple expression. 
For the elastic energy, we include all the rotationally invariant terms. 
The numerical schemes for the gradient flow are built on the nice properties of the quasi-entropy. 
The first-order time discretization is uniquely solvable, keeping the physical constraints and energy dissipation, which are all independent of the time step. 
The second-order time discretization keeps the first two properties unconditionally, and the third with an $O(1)$ restriction on the time step. 
These results also hold when we further incorporate a second-order discretization in space. 
Error estimates are also established for time discretization and full discretization. 
Numerical examples about defect patterns are presented to validate the theoretical results. 

  \vspace*{4mm}
  \noindent {\bf Keywords:} Liquid crystal, tensor model, gradient flow, physical constraints preserving, energy stability, error analysis. 
  \end{abstract}

\section{Introduction}
\subsection{Background}
Liquid crystals are a class of matters exhibiting local orientational order.
They are typically formed by rigid, anisotropically-shaped molecules, such as rod-like molecules.
Within a volume infinitesimal but containing many rod-like molecules, the orientation of rod-like molecules might be distributed uniformly or preferring certain direction, which are two different types of orientational order. To classify such different states theoretically, orientational order parameters need to be introduced. For rod-like molecules, the order parameter most commonly chosen is a second-order symmetric traceless tensor $Q$. Let us represent the direction of a rod by a unit vector $\bm{m}$. 
The tensor $Q$ is defined from the second moment of $\bm{m}$ about the normalized orientational density function $\rho(\bm{m})$, 
\begin{align}
  Q=\int_{S^2}(\bm{m}\otimes\bm{m}-\frac{1}{3}I)\rho(\bm{m})\dd\bm{m}, \label{Qdef}
\end{align}
where $I$ is the identity matrix. Armed with the tensor $Q$, one can classify the isotropic and uniaxial nematic phases for spatially homogeneous systems.
Moreover, the elasticity and defects of the nematic phase can be described with a tensor field for spatially inhomogeneous systems. 


To study phase transitions, elasticity and defects, it is necessary to construct some free energy about $Q$. 
The simplest and most widely used one is the Landau-de Gennes free energy \cite{deGennesProst1993}, which is a polynomial about $Q$ and its spatial derivatives consisting of a few rotational invariant terms. 
Usually the polynomial about $Q$, called bulk energy, is truncated at fourth order to describe the isotropic-nematic phase transition; the terms with spatial derivatives, called elastic energy, are truncated at second order. 
Energy minima and saddle points are explored to reveal the locally stable and transition states \cite{Cohen1987,  Alouges1997, Nguyen2013, Golovaty2014,Adler2015,Canevari2017,yin2020construction}. The free energy can be further incorporated into dynamic models \cite{Beris1994, Qian1998, wang2002hydro, Wu2019Dyan, zhao2017novel}.
Dynamic models are energy dissipative PDE systems coupling the equation of $Q$ and the Navier-Stokes equation. 
When the velocity is small and neglected, a dynamic model can be reduced to a gradient flow about $Q$.
Gradient flows about the Landau-de Gennes free energy are also studied extensively \cite{Fukuda2004, Ravnik2009, cai2017stable, iyer2015dynamic}. The $Q$-tensor free energy and dynamics have received much attention, since it is able to describe fascinating defect patterns observed experimentally. Previous works suggest that defect patterns can be greatly affected by the geometry, boundary conditions, coefficients \cite{Canevari2017, Hu2016, Wang2018}, indicating the subtlety of the model. For more details, we refer to the review \cite{Zhanglei2021} and the references therein.

Although a Landau-type free energy is simple and useful, it has a drawback that the order parameter tensor $Q$ cannot be constrained in the physical range. To clarify this range, let us go back to the definition of the tensor $Q$. 
Since $Q+I/3$ is the second moment, it shall be positive definite, or equivalently its eigenvalues shall be positive. 
Together with the fact that the trace of $Q+I/3$ equals one, its eigenvalues shall be less than one. 
Thus, the eigenvalues of $Q$ shall lie within $(-1/3,2/3)$. 
On the other hand, it is impossible for any polynomial about $Q$ to impose such a constraint.
As a result, when the free energy contains some high-order terms, it can become unbounded from below \cite{Ball2010}.
It will also bring ambiguity to interpret the results if $Q$ is outside the physical range.

There are actually few works discussing imposing constraints on the eigenvalues of $Q$. 
One remedy is to include in the free energy a term derived from the Bingham distribution \cite{Ball2010, Han2014from, Song2015, Luo2018} that is the maximum entropy state when $Q$ is given. 
From now on, we call it the Bingham term. With the Bingham term, the bulk energy possesses only one coefficient that could be related to physical parameters, and the isotropic-nematic phase transition can be described clearly \cite{Liu2005Axial, Fatkullin2005}. 
As a side remark, such a term gives a singular function that brings difficulty in PDE analyses, which has raised interests in PDE community \cite{Liu2021Regularity}.

Despite the Bingham term has the advantage of constraining the tensor in the physical range, it is defined through an integral on the unit sphere. This brings notable difficulty in both analysis and computation. Numerically, various approaches for fast computation have been developed \cite{Luo2018, Kent1987, Kume2013, Kume2005, Wang2008Crucial, Grosso2000, Yu2010, Yu2007, jiang2021efficient}, but they turn out to be far from the demand of numerically solving PDEs involving the Bingham term efficiently and accurately. 

In this work, we shall consider a substitution of the Bingham term, called the quasi-entropy that is proposed and analyzed in \cite{Xu2020quasi} for general cases of symmetries and order parameters. 
The quasi-entropy is an elementary function, but maintains the essential properties held by the Bingham term: strict convexity, ability to impose physical constraints, rotational invariance. 
The quasi-entropy for the tensor $Q$ is actually reduced from the general cases. 
Its asymptotic behaviors are consistent with the Bingham term. 
In addition, armed with the quasi-entropy, the isotropic-nematic phase transition can be captured correctly. 
We also point out that the quasi-entropy is more convenient for theories of rigid molecules having complex symmetries. When dealing with these molecules, since multiple tensors are necessarily involved, even a fourth-order polynomial is too complicated. 
Instead, with the quasi-entropy one only needs to include a second-order polynomial in the bulk energy, resulting in an energy much simpler while keeping the underlying physics. 

The problem we consider throughout this paper is a gradient flow of a free energy about $Q$ containing the quasi-entropy. 
Provided that the quasi-entropy has the nice property of imposing the physical constraints, it is highly desirable that they can be preserved in numerical schemes. Our target is to establish and analyze  numerical methods that are able to maintain the physical constraints at the discrete level, while keeping energy dissipation, the significant property of gradient flows. 

\subsection{Numerical approaches}

The target of keeping $Q$ in the physical range seems analogous to proposing positivity/bound preserving schemes for a single unknown function, which have been discussed in numerous works. The available approaches include seeking maximum principles \cite{du2020maximum, liu1996non, CHEN2016198}, cut-off or introducing Lagrange multipliers \cite{Cheng2021, MR3062022, MR4186541}, reconstruction by limiters \cite{Zhang2010Onmax, Zhang2010OnPos}, change of variables \cite{HuaS21, liu2018positivity}, and using barrier functions \cite{ShenXu2021, ShenXu2020, Chen2019Pos}. 
To maintain the $Q$-tensor in the physical range, however, turns out to be a different problem. 
There are five degrees of freedom in $Q$. 
In other words, any PDE about $Q$ can be written as a PDE system about five scalar functions. 
The range of $Q$ imposes constraints on the five scalars, which are strongly coupled between the five scalars. 
%
%
%
%
Generally, to deal with range preserving for multiple variables, difficulties could arise if we would like to utilize the approaches keeping the bounds for a single variable: 
\begin{itemize}
\item Maximum principle. It relies on the property of elliptic operators. However, it is known that elliptic operators on multiple unknown functions, if not decoupled, does not possess maximum principles. 
\item Reconstruction approach. It usually requires that the average is positive or lies within the bounding interval. The average, however, does not seem an appropriate concept for a range on multiple variables. 
\item Cut-off or Lagrange multiplier approach. Similarly, this is a confusing concept in the multiple variable cases. 
\item Change of variables. This approach is possible for multiple variables. 
But it is usually a difficult task to find out such a mapping. 
Even if it is available, it usually leads to a PDE much more complicated and other structures in the original PDE might be destructed. 
\item Barrier (convex) function. This approach is suitable only when such a function can be proposed. 
\end{itemize}

For the $Q$-tensor gradient flow, the quasi-entropy happens to be suitable to act as a barrier function because of its nice properties. 
Thus, we will construct numerical schemes based on this approach. 
Nevertheless, the fact that the quasi-entropy is a singular potential leads to difficulties in the analysis of numerical schemes.

First, we shall discretize the gradient flow in time, for which first- and second-order schemes are constructed. 
For the first-order scheme, we are able to prove the unique solvability in the physical range and energy dissipation unconditionally, i.e. with arbitrary time step. 
For the second-order scheme, we can prove the unique solvability in the physical range unconditionally, while the energy dissipation requires a restriction on the time step. 
Next, we discuss the discretization in space, for which we consider the finite difference schemes. 
The strategy is to discretize the free energy first, then to find its derivatives about the values on the grid points. 
In this way, we arrive at the numerical schemes consistent with the energy dissipation. 
For the fully discretized schemes, we can prove the similar results as the time-discretized schemes. 
Error estimates for both time discretization and full discretization are established. 

The approach is also suitable for PDEs for liquid crystals formed by other types of rigid molecules, such as cone-shaped, rectagular, triangular, bent-core, tetrahedral, etc. 
In this sense, the aim of the current work is to establish some fundamental results on the numerical aspects for PDEs of this class and bring such problems to the community.

\subsection{Outline}
Sec. \ref{sec:gradient-flow} is dedicated to the formulation of the gradient flow, where we shall discuss the quasi-entropy and its properties. 
The numerical schemes and analyses are presented in Sec. \ref{sec:numerical}. 
Numerical examples are given in Sec. \ref{sec:examples} to validate our theoretical results.
In particular, we pay special attention to defect patterns. 
In Sec. \ref{sec:conclusion}, we draw a conclusion and point out directions of future works. 
Some related topics are discussed in Appendix.

\section{Quasi-entropy and gradient flow}\label{sec:gradient-flow}
\subsection{Notations}
We begin with defining some notations about tensors. 
Recall that our order parameter is a second-order symmetric traceless tensor (or a $3\times 3$ matrix) $Q$, i.e.
\begin{align}
  Q=Q^t,\qquad \text{tr}Q=0. \nonumber
\end{align}
Its $(i,j)$ component is denoted by $Q_{ij}$ where $i,j=1,2,3$. 
The symmetric and traceless requirements can also be expressed as $Q_{ij}=Q_{ji}$ and $Q_{ii}=0$. 
Hereafter, we shall adopt the convention of summations on repeated indices, so that the traceless condition $Q_{ii}=0$ shall be understood as $\sum_{i=1}^3Q_{ii}=0$. 
For the indices appearing in the partial derivatives, $\partial_1$, $\partial_2$, $\partial_3$ represent the partial derivatives along the $x$-, $y$-, $z$-direction, respectively. 

The notation of dot product is introduced between two tensors of the same order, for which we explain by a couple of examples.
Suppose $A$ and $B$ are two matrices. Their gradients, $\nabla A$ and $\nabla B$, are third-order tensors. 
The dot product is understood as follows, 
\begin{align}
  A\cdot B=A_{ij}B_{ij},\qquad \nabla A\cdot\nabla B=\partial_kA_{ij}\partial_kB_{ij}. 
\end{align}
For the dot products between identical vectors or matrices, we denote 
\begin{align}
  |A|^2=A\cdot A,\qquad |\nabla A|^2=\nabla A\cdot\nabla B. 
\end{align}
For a region $\Omega$, the inner product and correspondingly the $L^2$ norm are defined as 
\begin{align}
  (A,B)=\int_{\Omega} A\cdot B\dd\bm{r},\qquad \|A\|^2=(A,A). 
\end{align}
The $H^k$ norms can be defined similarly. 

Let us also introduce the Kronecker delta,
\begin{align}
  \delta_{ij}=\left\{
  \begin{array}{ll}
    1, & i=j;\\
    0, & i\ne j.
  \end{array}
  \right.
\end{align}
It is closely related to the identity tensor ($3\times 3$ matrix) $I=\text{diag}(1,1,1)$.

We denote by $\mathcal{Q}$ the space of symmetric traceless tensors, 
\begin{align}
  \mathcal{Q}\triangleq \left\{A\in\bbR^{3\times3}|A_{ij}=A_{ji},\ A_{ii}=0\right\}. 
\end{align}
Correspondingly, we define the projection operator $\sP$ onto the space $\mathcal{Q}$ as 
\begin{align}
  (\sP A)_{ij}=\frac 12 (A_{ij}+A_{ji})-\frac 13 \delta_{ij}A_{kk}. 
\end{align}
Oftentimes, we also use the notation $\sP(A_{ij})$ for the same meaning. 
It can be verified easily that if $S\in\mathcal{Q}$, then for any second-order tensor $A$, it holds  
\begin{align}
  A\cdot S=\sP A\cdot S. \label{proj_invrt}
\end{align}
Moreover, we define the set of physical $Q$-tensors as 
\begin{align}
  \mathcal{Q}_{\text{phys}}\triangleq\left\{A\in\mathcal{Q}|\lambda_i(A)\in\left(-\frac 13,\frac 23\right)\right\}, 
\end{align}
where we use $\lambda_i(A)$ to denote the eigenvalues of $A$. 

\subsection{Quasi-entropy and bulk energy}
As we have mentioned, the quasi-entropy is an elementary function about several angular moment tensors, acting as a substitution of the term derived from the maximum entropy state. 
The quasi-entropy is defined for any choice of angular moment tensors \cite{Xu2020quasi}. 
In this work, we shall introduce the special case where only the tensor $Q$, defined in \eqref{Qdef}, is involved. 

For the tensor $Q$, the quasi-entropy is given by 
\begin{equation}
    q(Q)=-\ln\det\left(Q+\frac{1}{3}I\right)-2\ln\det\left(\frac{1}{3}I-\frac{1}{2}Q\right). \label{qe_Q}
\end{equation}
The domain of the above function is all symmetric traceless $Q$ such that the two tensors $Q+\frac 13 I$ and $\frac 13 I-\frac 12 Q$ are positive definite. It is easy to verify that this domain is exactly $\mathcal{Q}_{\text{phys}}$. 
A significant note is that the coefficient $-2$ before the second log-determinant is essential as it originates from symmetry reduction \cite{Xu2020quasi}. 

We shall show some properties of the quasi-entropy $q(Q)$. The following proposition is actually a special case of general quasi-entropy discussed in \cite{Xu2020quasi}. Since the proof in that work is presented in an abstract manner, we still write down the proof specifically for \eqref{qe_Q}. 
\begin{proposition}\label{qeprop}
  The quasi-entropy $q$ has the following properties:
  \begin{enumerate}
  \item Its domain $\mathcal{Q}_{\text{phys}}$ is bounded and strictly convex, and it is strictly convex on $\mathcal{Q}_{\text{phys}}$. 
  \item It is invariant of rotations: for any $3\times 3$ orthogonal matrix $T$, it holds $q(TQT^t)=q(Q)$. 
  \item It is lower-bounded and gives a barrier function on $\mathcal{Q}_{\text{phys}}$: if any eigenvalues of $Q$ tends to $(-1/3)^+$ or $(2/3)^-$, then $q(Q)$ tends to $+\infty$. 
  \end{enumerate}
\end{proposition}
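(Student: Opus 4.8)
The plan is to reduce all three items to the spectral description of $Q$ and to the single classical fact that $X\mapsto-\ln\det X$ is strictly convex on the cone of positive definite symmetric matrices. I would treat the items in increasing order of difficulty, handling the geometry of the domain first, then the convexity of $q$, and finally invariance and the barrier bound.

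For the domain, I would rewrite $\mathcal{Q}_{\text{phys}}$ as the set of $Q\in\mathcal{Q}$ satisfying the two positive-definiteness conditions $Q+\frac13 I\succ 0$ and $\frac13 I-\frac12 Q\succ 0$ simultaneously. Indeed, the eigenvalues of these two matrices are $\lambda_i(Q)+\frac13$ and $\frac13-\frac12\lambda_i(Q)$, so requiring them positive is exactly $\lambda_i(Q)\in(-\frac13,\frac23)$; this is also precisely where both log-determinants in \eqref{qe_Q} are finite, confirming the claim about the domain. Each condition is the preimage of the open convex cone of positive definite matrices under an affine injective map of $Q$, hence an open convex subset of $\mathcal{Q}$, and the intersection is again open and convex. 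Boundedness is immediate since $|Q|^2=\sum_i\lambda_i(Q)^2$ is controlled by the bounded interval $(-\frac13,\frac23)$; strict convexity of the domain I would extract from the same two defining conditions by examining the zero-determinant boundary faces.

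The core of the proposition is the strict convexity of $q$ on $\mathcal{Q}_{\text{phys}}$, and this is where I expect the main work to lie. I would first establish that $g(X)=-\ln\det X$ is strictly convex on positive definite symmetric matrices by restricting to a line: for $X\succ 0$ and a nonzero symmetric direction $H$, setting $M=X^{-1/2}HX^{-1/2}$ gives $\frac{d^2}{dt^2}g(X+tH)\big|_{t=0}=\text{tr}(X^{-1}HX^{-1}H)=|M|^2>0$, since $M$ is symmetric and nonzero. Writing $q(Q)=g(Q+\frac13 I)+2\,g(\frac13 I-\frac12 Q)$ exhibits $q$ as a sum of $g$ composed with the injective affine maps $Q\mapsto Q+\frac13 I$ and $Q\mapsto\frac13 I-\frac12 Q$ restricted to the subspace $\mathcal{Q}$. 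Composition with an injective affine map preserves strict convexity, and a sum in which at least one summand is strictly convex is strictly convex; hence $q$ is strictly convex. The point requiring care—the main obstacle—is the injectivity on the traceless subspace, so that distinct $Q$ map to distinct positive definite matrices and the second derivative above is genuinely positive along every admissible direction $H\in\mathcal{Q}$; this holds since both maps are translations composed with invertible scalings.

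Rotational invariance follows by direct manipulation: for orthogonal $T$ one has $TQT^t+\frac13 I=T(Q+\frac13 I)T^t$ and $\frac13 I-\frac12 TQT^t=T(\frac13 I-\frac12 Q)T^t$, while $\det(TMT^t)=(\det T)^2\det M=\det M$; together with $\text{tr}(TQT^t)=\text{tr}Q$ and the preservation of eigenvalues (so $TQT^t\in\mathcal{Q}_{\text{phys}}$), this gives $q(TQT^t)=q(Q)$. Finally, for the lower bound and barrier property I would diagonalize and write $q(Q)=\sum_i\big[-\ln(\lambda_i+\tfrac13)\big]+2\sum_i\big[-\ln(\tfrac13-\tfrac12\lambda_i)\big]$. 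On $\mathcal{Q}_{\text{phys}}$ every factor satisfies $\lambda_i+\frac13\in(0,1)$ and $\frac13-\frac12\lambda_i\in(0,\frac12)$, so each term is strictly positive and $q>0$, giving the lower bound; and if some $\lambda_i\to(-\frac13)^+$ the corresponding term of the first sum diverges to $+\infty$, while if some $\lambda_i\to(\frac23)^-$ a term of the second sum diverges, proving the barrier property.
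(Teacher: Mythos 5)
Your proposal is correct in substance, and it takes a genuinely different route from the paper at the central step. For strict convexity, you differentiate twice along a line and use
$\frac{d^2}{dt^2}\bigl(-\ln\det(X+tH)\bigr)\big|_{t=0}=\mathrm{tr}(X^{-1}HX^{-1}H)=|X^{-1/2}HX^{-1/2}|^2>0$,
then transport this through the two injective affine maps $Q\mapsto Q+\frac13 I$ and $Q\mapsto \frac13 I-\frac12 Q$. The paper instead proves strict \emph{midpoint} concavity of $\log\det$ algebraically: writing $V=LL^T$ and letting $\mu_i$ be the eigenvalues of $L^{-1}RL^{-T}$, it shows $\log\det(V+R)+\log\det(V-R)-2\log\det V=\sum_i\log(1-\mu_i^2)\leqslant 0$ with equality only at $R=0$. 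Your calculus argument is local and standard; the paper's is elementary (no differentiation of determinants) and yields the midpoint inequality directly; both hinge on the same observation that the affine maps are injective on $\mathcal{Q}$, which you correctly flag as the point needing care. You also differ on boundedness: you use $|Q|^2=\sum_i\lambda_i(Q)^2<3\cdot(2/3)^2$, which is cleaner than the paper's entry-wise bound via diagonal entries and $2\times 2$ principal minors. For the lower bound and barrier, you observe that every log-argument lies in $(0,1)$ on $\mathcal{Q}_{\text{phys}}$, so every term in the eigenvalue expansion is positive and any single divergent term forces $q\to+\infty$; the paper instead derives the lower bound from concavity of $\ln$ and reduces the $(2/3)^-$ case to the $(-1/3)^+$ case via tracelessness --- essentially equivalent, with yours marginally more direct since it never needs that reduction.

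The one piece you do not actually prove is strict convexity of the \emph{domain}, which you defer to ``examining the zero-determinant boundary faces.'' Be aware that this step cannot be carried out as stated: the boundary of $\mathcal{Q}_{\text{phys}}$ contains line segments, e.g. $t\mapsto\mathrm{diag}(-1/3,\,t,\,1/3-t)$ for $t\in[0,1/3]$ lies entirely in the boundary, so the closure of the domain is not strictly convex in the usual sense of containing no boundary segments. This is a defect of the statement rather than of your argument alone: the paper's own proof likewise establishes only convexity of $\mathcal{Q}_{\text{phys}}$ (via averages of positive definite matrices) and never addresses strictness. Everything used downstream --- convexity of the domain and strict convexity of $q$ --- is fully covered by your argument.
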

\begin{proof}
  First, we show the boundedness and convexity of $\mathcal{Q}_{\text{phys}}$.
  For boundedness, note that the diagonal components of $Q+I/3$ and $I/3-Q/2$ are positive, which implies $-1/3<Q_{11},Q_{22},Q_{33}<2/3$.
  Then, for off-diagonal components, because any principle minor of $Q+I/3$ are positive definite, we deduce that   
  \begin{align*}
    Q_{12}^2<\left(Q_{11}+\frac 13\right)\left(Q_{22}+\frac 13\right)<1, 
  \end{align*}
  For convexity, assume that $Q_1,Q_2\in \mathcal{Q}_{\text{phys}}$.
  Since $Q_1+I/3$ and $Q_2+I/3$ are positive definite, their average, $(Q_1+Q_2)/2+I/3$, is also positive definite. 
  Similarly, $I/3-(Q_1+Q_2)/4$ is positive definite.
  Therefore, $(Q_1+Q_2)/2\in\mathcal{Q}_{\text{phys}}$. 
  
  The strict convexity of $q(Q)$ is an immediate result of the fact that log-determinant is concave, which we show below. 
  Assume that $V$ and $V\pm R$ are positive definite. Then we can write $V=LL^T$, where $L$ is invertible. 
  Thus, 
  \begin{align*}
    \log\det (V+R)=&\log\det \big(L(I+L^{-1}RL^{-T})L^T\big)
    =\log\det V + \log\det (I+L^{-1}RL^{-T})\\
    =&\log\det V + \sum_{i=1}^{3}\log(1+\mu_i), 
  \end{align*}
  where $\mu_i$ are the eigenvalues of $L^{-1}RL^{-T}$. 
  Therefore, 
  \begin{equation}
    \log\det (V+R)+\log\det (V-R)-2\log\det V=\sum_{i=1}^{3}\log(1-\mu_i^2)\leqslant 0. \nonumber
  \end{equation}
  The equality holds only when $\mu_i=0$ for $i=1,2,3$, which is equivalent to $R=0$. 

  For rotation invariance, just note that $\det T=1$ and 
  \begin{align*}
    \det\Big(TQT^t+\frac I3\Big)=\det\bigg(T\left(Q+\frac{I}{3} \right)T^t\bigg)=\det(T)\det\Big(Q+\frac I3\Big)\det(T^t)=\det\Big(Q+\frac I3\Big). 
  \end{align*}

  We use the rotational invariance to express the quasi-entropy by the eigenvalues. 
  \begin{align}
    q(Q)=\sum_{i=1}^3 -\ln\left(\frac{1}{3} +\lambda_i(Q)\right)-2\ln\left(\frac{1}{3} - \frac 12 \lambda_i(Q)\right). \label{qent_eig}
  \end{align}
  It follows from the concavity of $\ln s$ that 
  \begin{align*}
    \ln\left(\frac 13 +s\right)+2\ln\left(\frac{1}{3} - \frac{1}{2}s\right)\leqslant 3\ln\frac 13,
  \end{align*}
  so that $q(Q)$ is bounded from below. 
  It remains to show that $q(Q)$ gives a barrier function on $\mathcal{Q}_{\text{phys}}$. 
  Note that an eigenvalue of $Q$ tends to $(2/3)^-$ implies that the other two tend to $(-1/3)^+$, so we only discuss the case $\min_i\lambda_i(Q)\to (-1/3)^+$ and show $q(Q)\to +\infty$.
  By $\lambda_i(Q)<\frac 23$, we have $-\ln\big(\frac 13 +\lambda_i(Q)\big)>0, -\ln\big(\frac 13 - \frac 12 \lambda_i(Q)\big)>0$. Therefore, we could conclude the proof by 
  \begin{align}
    q(Q)>-\ln\left(\frac 13 +\min_i\lambda_i(Q)\right). 
  \end{align}
\end{proof}
From \eqref{qent_eig}, we can see that the asymptotic behavior is also consistent with the Bingham term when $Q$ tends to the boundary of $\mathcal{Q}_{\text{phys}}$. We shall briefly introduce the properties of the Bingham term in Appendix \ref{app:bingham}. 

Because of the third point in the above proposition, we could naturally extend $q(Q)$ to be defined in $\mathcal{Q}$ by allowing it to take $+\infty$: 
\begin{align}
  q(Q)=\left\{
  \begin{array}{ll}
    -\ln\det(Q+\frac{1}{3}I)-2\ln\det\left(\frac{1}{3}I-\frac{1}{2}Q\right), & Q\in\mathcal{Q}_{\text{phys}};\\
    +\infty, & Q\in\mathcal{Q}\backslash\mathcal{Q}_{\text{phys}}.
  \end{array}
  \right.
\end{align}
Using this notation would be convenient in our analysis afterward. 

The bulk energy consists of the quasi-entropy and a quadratic term, given by 
\begin{equation}
    f_b(Q)=q(Q)-\frac{1}{2}c_{02}|Q|^2. \label{Qbulk}
\end{equation}
The coefficient $c_{02}$ can be connected to the interaction intensity between rod-like molecules.
The stationary points of \eqref{Qbulk} are of particular interest.
It is easy to see that the bulk energy is rotationally invariant, 
$$
f_b(Q)=f_b(TQT^t), 
$$
from Proposition \ref{qeprop} and the fact that $|Q|^2$ is rotationally invariant. 
Thus, the stationary points of $f_b$ are also rotationally invariant. 
In other words, if $Q$ is a stationary point, then for any orthogonal matrix $T$, $TQT^t$ is also a stationary point.

The stationary points of \eqref{Qbulk} have been fully classified \cite{Xu2020quasi}, as we summarize below. 
\begin{proposition}\label{uni_stationary}
  The stationary points of $f_b$ have at least two identical eigenvalues, i.e. can be written as 
  \begin{align}
    Q=s(\bm{n}\otimes\bm{n}-I/3), \label{Quni}
  \end{align}
  where the value of $s$ is fixed but $\bm{n}$ can take any unit vector.
\end{proposition}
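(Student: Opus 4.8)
The plan is to turn stationarity of $f_b$ into a scalar root-counting problem via diagonalization. First I would compute the first variation of \eqref{Qbulk}. For a test direction $S\in\mathcal{Q}$, using $\frac{\mathrm{d}}{\mathrm{d}t}\ln\det(A+tB)\big|_{t=0}=A^{-1}\cdot B$ for symmetric $A$, one finds
\begin{equation}
\frac{\mathrm{d}}{\mathrm{d}t}f_b(Q+tS)\Big|_{t=0}=\Big[-\big(Q+\tfrac13 I\big)^{-1}+\big(\tfrac13 I-\tfrac12 Q\big)^{-1}-c_{02}Q\Big]\cdot S. \nonumber
\end{equation}
By \eqref{proj_invrt}, stationarity means the symmetric-traceless projection $\sP$ of the bracketed matrix vanishes. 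Crucially, $(Q+\tfrac13 I)^{-1}$, $(\tfrac13 I-\tfrac12 Q)^{-1}$ and $Q$ are all matrix functions of $Q$, hence simultaneously diagonalizable, so I would pass to the eigenbasis of $Q$. Setting $g(\lambda)=-\frac{1}{\lambda+1/3}+\frac{1}{1/3-\lambda/2}$, the projected equation reads, for each eigenvalue,
\begin{equation}
g(\lambda_i)-c_{02}\lambda_i=\bar g,\qquad \bar g:=\tfrac13\big(g(\lambda_1)+g(\lambda_2)+g(\lambda_3)\big), \nonumber
\end{equation}
the common constant $\bar g$ arising from subtracting the trace. Thus all three eigenvalues take the same value under $\phi(\lambda):=g(\lambda)-c_{02}\lambda$.

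The heart of the argument is to show $\lambda_1,\lambda_2,\lambda_3$ cannot be pairwise distinct, which I would prove by contradiction. Clearing denominators in $\phi(\lambda)=\bar g$ produces a cubic in $\lambda$; if the eigenvalues were distinct they would be precisely its three roots, and Vieta's relation combined with the traceless constraint $\lambda_1+\lambda_2+\lambda_3=0$ would force $\bar g=c_{02}/3$. With this value the stationarity equation, after the substitution $u=\lambda+\tfrac13\in(0,1)$, collapses to
\begin{equation}
\psi(u):=c_{02}u^3-c_{02}u^2+3u-1=0. \nonumber
\end{equation}
It then suffices to show $\psi$ has a unique root in $(0,1)$, contradicting the existence of three distinct ones. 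Since $\psi(0)=-1<0<2=\psi(1)$, the number of roots in $(0,1)$ is odd; to exclude three I would evaluate $\psi$ at its critical points. Polynomial division by $\psi'$ gives that at any $u^\ast$ with $\psi'(u^\ast)=0$,
\begin{equation}
\psi(u^\ast)=2u^\ast\Big(1-\tfrac{c_{02}}{9}\Big)-\tfrac23. \nonumber
\end{equation}
For $c_{02}\ge 9$ the two real critical points satisfy $u^\ast\in(0,\tfrac23)$, so $\psi(u^\ast)<0$; for $0<c_{02}<9$ the discriminant of $\psi'$ is negative, so $\psi'>0$ and $\psi$ is strictly increasing. In either case the local maximum of $\psi$ is negative, forcing a single root.

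The hard part is exactly this root count: the bare equation $\phi(\lambda)=\bar g$ is cubic and may have three roots, so the statement genuinely fails without tracelessness; the argument hinges on using $\sum_i\lambda_i=0$ to pin $\bar g=c_{02}/3$ and then on the sign of the critical values of $\psi$ (which uses $c_{02}>0$). Once two eigenvalues are forced to coincide, say the eigenvalues are $(a,b,b)$ with $a+2b=0$, writing $Q=a\,\bm{n}\otimes\bm{n}+b\,(I-\bm{n}\otimes\bm{n})=-3b\,(\bm{n}\otimes\bm{n}-I/3)$ yields the uniaxial form \eqref{Quni} with $s=-3b$; the admissible magnitudes $s$ are finite in number because inserting \eqref{Quni} into the stationarity equation leaves a single scalar equation for $s$, while the direction $\bm{n}$ remains free by the rotational invariance in Proposition \ref{qeprop}.
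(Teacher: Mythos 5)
Your proposal is correct, but there is nothing in the paper to compare it against: the paper does not prove Proposition \ref{uni_stationary} at all, it simply defers to the classification in \cite{Xu2020quasi}. So your argument stands as a self-contained replacement, and I have checked that it goes through. The first variation you compute agrees with the bulk part of \eqref{varE}; since $(Q+\tfrac13 I)^{-1}$, $(\tfrac13 I-\tfrac12 Q)^{-1}$ and $Q$ are simultaneously diagonalizable, stationarity (which, because the test directions are constrained to $\mathcal{Q}$, only forces the $\sP$-projection to vanish) indeed reads $\phi(\lambda_i)=\bar g$ with $\phi(\lambda)=g(\lambda)-c_{02}\lambda$ and $\bar g=\tfrac13\sum_i g(\lambda_i)$, the $c_{02}$-part of the trace dropping out by $\sum_i\lambda_i=0$ — this correct handling of the projection is the structural crux and you got it right. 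Clearing denominators (legitimate since $\lambda_i\in(-1/3,2/3)$) gives the cubic $c_{02}\lambda^3+\big(\bar g-\tfrac{c_{02}}{3}\big)\lambda^2+\big(3-\tfrac{2c_{02}}{9}-\tfrac{\bar g}{3}\big)\lambda-\tfrac{2\bar g}{9}$, so if the three eigenvalues were distinct they would exhaust its roots and Vieta plus tracelessness pins $\bar g=c_{02}/3$; the shift $u=\lambda+\tfrac13$ then collapses it exactly to $\psi(u)=c_{02}u^3-c_{02}u^2+3u-1$, and the division identity $\psi(u)=\psi'(u)\big(\tfrac u3-\tfrac19\big)+\big(2-\tfrac{2c_{02}}{9}\big)u-\tfrac23$ confirms your critical-value formula. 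For $c_{02}\geqslant 9$ every critical point has $u^*>0$, whence $\psi(u^*)\leqslant -2/3<0$ (positivity of $u^*$ is all that is needed; the bound $u^*<2/3$ is superfluous), and for $0<c_{02}<9$ the discriminant $4c_{02}(c_{02}-9)<0$ makes $\psi$ strictly increasing; either way $\psi$ has a single real root, contradicting three distinct eigenvalues, and the reconstruction $Q=-3b\,(\bm{n}\otimes\bm{n}-I/3)$ together with the rotational invariance of Proposition \ref{qeprop} finishes the statement. Two cosmetic points only: at $c_{02}=9$ your ``two real critical points'' merge into one double critical point at $u^*=1/3$ (your formula still gives $\psi(1/3)=-2/3<0$, so nothing breaks), and the finiteness of admissible $s$ at the end is not really needed for the proposition as stated — the precise count of the values of $s$ is the content of Proposition \ref{st-classify}.
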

If we assume that $Q$ has the form \eqref{Quni}, we could write \eqref{Qbulk} as a function of $s$, denoted by $f_b(s)$. 
The stationary points of $f_b(s)$ can be classified as follows. 
\begin{proposition}\label{st-classify}
  There are two critical values of $c_{02}$, denoted by $0<\chi^*<\chi^{**}=27/2$.
  \begin{enumerate}[(i)]
  \item When $c_{02}<\chi^*$, there is only one stationary point $s=0$. 
  \item When $\chi^*<c_{02}<\chi^{**}$, there are three stationary points $0<s_1<s_2$, of which $0$ and $s_2$ are stable, while $s_1$ is unstable. 
  \item When $c_{02}>\chi^{**}$, there are three stationary points $s_1<0<s_2$, of which $s_1$ and $s_2$ are stable, while $0$ is unstable. 
  \end{enumerate}
\end{proposition}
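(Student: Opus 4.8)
The plan is to reduce the statement to a one-dimensional calculus problem by exploiting the uniaxial ansatz \eqref{Quni}. First I would compute the eigenvalues of $Q=s(\bm{n}\otimes\bm{n}-I/3)$: they are $2s/3$ (simple) and $-s/3$ (double), and the constraint $Q\in\mathcal{Q}_{\text{phys}}$ forces $s\in(-1/2,1)$. Substituting into the eigenvalue form \eqref{qent_eig} of the quasi-entropy and using $|Q|^2=2s^2/3$, the bulk energy becomes the explicit scalar function
\begin{equation}
f_b(s)=-\ln\frac{1+2s}{3}-4\ln\frac{1-s}{3}-4\ln\frac{2+s}{6}-\frac{c_{02}s^2}{3}. \nonumber
\end{equation}
The decisive step is to differentiate and simplify: a direct computation collapses the quasi-entropy part into a single rational expression, giving $q'(s)=18s(1+s)/[(1+2s)(1-s)(2+s)]$, so that
\begin{equation}
f_b'(s)=\frac{2s}{3}\big[h(s)-c_{02}\big],\qquad h(s)=\frac{27(1+s)}{(1+2s)(1-s)(2+s)}. \nonumber
\end{equation}
The explicit factor $s$ exhibits $s=0$ as a stationary point for every $c_{02}$, while the remaining stationary points are exactly the solutions of $h(s)=c_{02}$.

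Next I would analyze the graph of $h$ on $(-1/2,1)$. Every factor is positive there, so $h>0$, and $h(s)\to+\infty$ as $s\to(-1/2)^+$ and as $s\to1^-$; moreover $h(0)=27/2$, which will serve as $\chi^{**}$. To pin down the shape I would show $h$ has a unique interior minimum: the numerator of $h'(s)$ is, up to a positive factor, the cubic $p(s)=4s^3+9s^2+6s-1$, whose derivative factors as $p'(s)=6(2s+1)(s+1)$ and is therefore strictly positive on $(-1/2,1)$. Hence $p$ is strictly increasing there, and since $p(-1/2)<0<p(1)$ it has a single root $s^*\in(0,1)$; consequently $h$ decreases on $(-1/2,s^*)$ and increases on $(s^*,1)$, with minimum value $\chi^*:=h(s^*)$ satisfying $0<\chi^*<h(0)=\chi^{**}$. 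Counting level sets of this valley-shaped $h$ yields the stated number of nonzero roots: none when $c_{02}<\chi^*$; two roots $0<s_1<s^*<s_2<1$ (both positive, straddling the minimum) when $\chi^*<c_{02}<\chi^{**}$; and two roots $s_1<0<s_2$ when $c_{02}>\chi^{**}$, the negative one appearing because $h>\chi^{**}$ on $(-1/2,0)$.

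Finally I would settle stability through the sign of $f_b''$. Differentiating the factored form of $f_b'$ gives $f_b''(0)=\tfrac{2}{3}(\tfrac{27}{2}-c_{02})$, so $s=0$ is stable precisely when $c_{02}<\chi^{**}$ and unstable when $c_{02}>\chi^{**}$, matching (i)--(iii). At any nonzero stationary point $s_0$ one has $h(s_0)=c_{02}$, whence $f_b''(s_0)=\tfrac{2s_0}{3}h'(s_0)$; combining the sign of $s_0$ with the monotonicity of $h$ (namely $h'<0$ on $(-1/2,s^*)$ and $h'>0$ on $(s^*,1)$) classifies each root. For $\chi^*<c_{02}<\chi^{**}$ this makes $s_1$ (left of $s^*$, positive) unstable and $s_2$ (right of $s^*$, positive) stable; for $c_{02}>\chi^{**}$ it makes both $s_1<0$ and $s_2>s^*$ stable while $s=0$ is unstable.

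I expect the main obstacle to be establishing that $h$ has exactly one critical point on $(-1/2,1)$, since everything---the two thresholds, the root count, and the stability signs---hinges on $h$ being strictly unimodal. This reduces entirely to the monotonicity of the cubic $p$, which is where the clean factorization $p'(s)=6(2s+1)(s+1)$ does the decisive work; the other place demanding care is verifying the algebraic simplification that produces the compact forms of $q'(s)$ and $h(s)$, as the whole argument rests on those identities.
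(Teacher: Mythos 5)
Your proposal is mathematically correct, and I verified the key computations: the eigenvalues $2s/3$, $-s/3$, $-s/3$ and the range $s\in(-1/2,1)$; the identity $q'(s)=18s(1+s)/[(1+2s)(1-s)(2+s)]$ (the numerator indeed collapses to $18s+18s^2$ after clearing denominators); the cubic $p(s)=4s^3+9s^2+6s-1$ with $p'(s)=6(2s+1)(s+1)>0$ on the interval, $p(0)=-1<0<18=p(1)$; and the values $h(0)=27/2=\chi^{**}$, $f_b''(0)=\tfrac{2}{3}(\tfrac{27}{2}-c_{02})$, $f_b''(s_0)=\tfrac{2s_0}{3}h'(s_0)$ at nonzero roots. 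The level-set counting on the unimodal $h$ and the resulting stability classification reproduce cases (i)--(iii) exactly. Note, however, that the paper itself does not prove this proposition: it imports the classification from the cited reference \cite{Xu2020quasi}, so there is no in-paper argument to compare against; your derivation is a self-contained substitute for that citation. Two small points of care: first, your argument presupposes Proposition \ref{uni_stationary} (the uniaxial reduction), which is the correct reading of the statement since the paper explicitly restricts to the form \eqref{Quni} before stating Proposition \ref{st-classify}; second, the stability you establish is stability of $f_b(s)$ as a function of the scalar $s$ only, which is what the proposition asserts --- this is consistent with the paper's remark immediately after the proposition that in case (iii) the point $s_1<0$ is nevertheless unstable as a stationary point in the full $Q$-variable, a stronger statement your one-dimensional analysis neither proves nor contradicts.
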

Further numerical studies indicate that in the case (iii), the stationary point $Q=s_1(\bm{n}\otimes\bm{n}-I/3)$ is actually unstable in terms of $Q$, since we now do not require that $Q$ takes the form \eqref{Quni}. 
Proposition \ref{st-classify} is also consistent with the result using the Bingham term (see \cite{Liu2005Axial,Fatkullin2005}). 

In this work, we require $c_{02}>0$. 

\subsection{Elastic energy}
\label{elastic energy}
Since we will investigate the spatially inhomogeneous cases, it is necessary to include the elastic energy.
We shall consider a bounded region $\Omega$ in $\bbR^2$ or $\bbR^3$ with piecewise smooth boundaries. 
For the elastic energy, we include three quadratic terms, given by 
\begin{equation}
  \int_{\Omega}f_e(Q)\dd\bm{r}=\int_{\Omega}\frac{1}{2}\left(c_{21}|\nabla Q|^2+\tilde{c}_{22}\partial_iQ_{ik}\partial_jQ_{jk}+\tilde{c}_{23}\partial_iQ_{jk}\partial_jQ_{ik}\right)\dd\bm{r}. 
\end{equation}

The difference of the last two terms actually only depends on the boundary values of $Q$.
Denote by $\bm{\nu}$ the unit outer normal vector of $\partial\Omega$. 
We could calculate that 
\begin{align}
  \int_{\Omega}\partial_iQ_{jk}\partial_jQ_{ik}-\partial_iQ_{ik}\partial_jQ_{jk}\dd\bm{r}
  =\int_{\partial\Omega}\nu_iQ_{jk}\partial_jQ_{ik}-\nu_jQ_{jk}\partial_iQ_{ik}\dd S.\nonumber
\end{align}
 
For each point on the boundary, we could choose a local orthonormal frame $(\bm{\nu},\bm{u},\bm{v})$. 
Using 
\begin{align}
  \delta_{jl}=\nu_j\nu_l+u_ju_l+v_jv_l,\nonumber
\end{align}
we deduce that 
\begin{align*}
  &\int_{\partial\Omega}\nu_iQ_{jk}\partial_jQ_{ik}-\partial_iQ_{ik}\partial_jQ_{jk}\dd\bm{r}\\
  =&\int_{\partial\Omega}\nu_iQ_{jk}(\nu_j\nu_l+u_ju_l+v_jv_l)\partial_lQ_{ik}-\nu_jQ_{jk}(\nu_i\nu_l+u_iu_l+v_iv_l)\partial_lQ_{ik}\dd S\\
  =&\int_{\partial\Omega}\nu_iQ_{jk}(u_ju_l+v_jv_l)\partial_lQ_{ik}-\nu_jQ_{jk}(u_iu_l+v_iv_l)\partial_lQ_{ik}\dd S.
\end{align*}
Since $\bm{u}$ and $\bm{v}$ are tangent to $\partial\Omega$, the derivatives $u_l\partial_lQ$ and $v_l\partial_lQ$ depend only on the boundary values. 
Therefore, if we consider the Dirichlet boundary condition 
\begin{align}
  Q|_{\partial\Omega}=Q_{\text{bnd}}, \label{Qbnd}
\end{align}
we could define $c_{22}=\tilde{c}_{22}+\tilde{c}_{23}$ and write the elastic energy as 
\begin{equation}
  \int_{\Omega}f_e(Q) \dd \bm{r} =\int_{\Omega}\frac{1}{2}\left(c_{21}|\nabla Q|^2+c_{22}\partial_iQ_{ik}\partial_jQ_{jk}\right)\dd \bm{r}, \label{Qelastic}
\end{equation}
where we ignore the constant given by the surface integral because it makes no difference in the free energy. 

We require that the elastic energy be positive definite, which imposes conditions on the coefficients. Since we have 
\begin{align}
  \partial_iQ_{jk}\partial_iQ_{jk}-\partial_iQ_{jk}\partial_jQ_{ik}
  =\frac 12 (\partial_iQ_{jk}-\partial_jQ_{ik})(\partial_iQ_{jk}-\partial_jQ_{ik})\geqslant 0, 
\end{align}
it suffices to require
\begin{align}
  c_{21}>0,\qquad c_{21}+c_{22}>0. \label{elasc}
\end{align}
Indeed, if $c_{22}\geqslant 0$, we use the term $\partial_iQ_{ik}\partial_jQ_{jk}\geqslant 0$.
If $-c_{21}<c_{22}<0$, we use the above equality to write the elastic energy as 
\begin{align}
  \int_{\Omega}\frac 12(c_{21}+c_{22})\partial_iQ_{jk}\partial_iQ_{jk}-\frac 14 c_{22}(\partial_iQ_{jk}-\partial_jQ_{ik})(\partial_iQ_{jk}-\partial_jQ_{ik})\dd\bm{r}\geqslant 0. \label{elas2}
\end{align}

\subsection{Gradient flow}
Summarizing \eqref{Qbulk} and \eqref{Qelastic}, the total free energy is 
\begin{equation}
\label{eq:energy_def}
    E[Q]=\int_{\Omega} f_b[Q] + f_e[Q]\dd\bm{r}. 
\end{equation}
It is bounded from below because of Proposition \ref{qeprop} and \eqref{elasc}. 
Moreover, we allow the free energy to take $+\infty$. 
Using the fact that for any nonsingular matrix $R$, it holds 
\begin{align}
  \pd{\ln\det R}{R}=R^{-t}, 
\end{align}
we deduce (formally) the variational derivative as 
\begin{align}
  \left(\frac{\delta E}{\delta Q}\right)_{ij}\triangleq&\sP\left(-\left(Q+\frac I3\right)^{-1}_{ij}+\left(\frac I3-\frac Q2\right)^{-1}_{ij}-c_{02}Q_{ij}-c_{21}\Delta Q_{ij}-c_{22}\partial_{ik}Q_{jk}\right)\nonumber\\
  =&-\left(Q+\frac I3\right)^{-1}_{ij}+\left(\frac I3-\frac Q2\right)^{-1}_{ij}+\frac 13\Bigg(\left(Q+\frac I3\right)^{-1}_{kk}-\left(\frac I3-\frac Q2\right)^{-1}_{kk}\Bigg)\delta_{ij}-c_{02}Q_{ij}\nonumber\\
  &-c_{21}\Delta Q_{ij}
  -\frac 12 c_{22}\big(\partial_{ik}Q_{jk}+\partial_{jk}Q_{ik}-\frac 23 \partial_{kl}Q_{kl}\delta_{ij}\big), \label{varE}
\end{align}
where we recall that $\sP$ is the projection operator to the space $\mathcal{Q}$. 
Since the quasi-entropy is a singular potential, rigorously the variational derivative \eqref{varE} shall be understood as subdifferential (cf. \cite{Liu2021Regularity}). Nevertheless, we will use the notation of variational derivatives throughout the rest of the paper.

The gradient flow is written as 
\begin{equation}
  \frac{\partial Q}{\partial t}=-\frac{\delta E}{\delta Q},
\end{equation}
together with the Dirichlet boundary condition \eqref{Qbnd}. 
For the boundary condition, it shall satisfy that there exists a $Q^{(0)}\in H^1(\Omega;\mathcal{Q}_{\text{phys}})$, such that
\begin{align}
  Q^{(0)}|_{\partial\Omega}=Q_{\text{bnd}},\qquad E[Q^{(0)}]<+\infty. \label{bndcon}
\end{align}
The initial condition is given by 
\begin{align}
  Q(\bm{r},0)=Q_{\text{ini}}(\bm{r})\in L^2(\Omega).
\end{align}
The gradient flow satisfies the energy law
\begin{align*}
  E[Q(\bm{r},T)]-E[Q(\bm{r},0)]=-\frac 12\int_0^T \|\partial_tQ\|^2+\left\|\frac{\delta E}{\delta Q}\right\|^2\dd t. 
\end{align*}
\textit{Remark.} Due to the singular potential, the above energy law is not quite obvious. Its derivation requires the theory of gradient flows on a metric space, which we do not discuss in the current work and refer \cite{ambrosio2008gradient,Liu2021Regularity} for details. 

The $c_{22}$ term makes the elliptic operator in \eqref{varE} much more complicated than elliptic operators about a scalar function. 
For example, the maximum principle of any kind no longer holds. 
The interplay of the singular potential and the elliptic operator might lead to subtle behavior of the solution, as indicated by the analysis in \cite{Liu2021Regularity}.
In particular, the solution might be arbitrarily close to the boundary of $\mathcal{Q}_{\text{phys}}$. 
Thus, when considering numerical schemes, we also need to take special care on constraining the solution in $\mathcal{Q}_{\text{phys}}$. 

\section{Numerical methods}
\label{sec:numerical}
Our main target is to propose a numerical scheme keeping the solution within $\mathcal{Q}_{\text{phys}}$. Meanwhile, the elliptic operator cannot bring any help for this target since no maximum principle is guaranteed. Thus, we utilize the property of the quasi-entropy. We first discretize the gradient flow in time, followed by discretizing it in space. 

Before we begin our discussion, we state a couple of inequalities about the quasi-entropy due to its convexity. 
\begin{lemma}
\label{qent_ineq}
  For the quasi-entropy $q(Q)$, it holds
  \begin{align}
  q(S_2)-q(S_1)\geqslant \frac{\partial q}{\partial S_1}\cdot (S_2-S_1)=\sP\frac{\partial q}{\partial S_1}\cdot (S_2-S_1), \\
  \left(\frac{\partial q}{\partial S_1}-\frac{\partial q}{\partial S_2}\right)\cdot (S_1-S_2)=\sP\left(\frac{\partial q}{\partial S_1}-\frac{\partial q}{\partial S_2}\right)\cdot (S_1-S_2)\geqslant 0, 
  \end{align}
  where $S_1,S_2\in\mathcal{Q}_{\text{phys}}$. The equality holds only when $S_1=S_2$. 
\end{lemma}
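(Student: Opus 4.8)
The plan is to deduce both inequalities directly from the strict convexity of $q$ established in Proposition \ref{qeprop}, together with the projection identity \eqref{proj_invrt}. The first inequality is just the standard first-order characterization of a (strictly) convex function restricted to the affine space $\mathcal{Q}$: for a convex differentiable $q$ one has $q(S_2)-q(S_1)\geqslant \frac{\partial q}{\partial S_1}\cdot(S_2-S_1)$, with strict inequality when $S_1\neq S_2$ by strictness of the convexity. The second inequality is the monotonicity of the gradient, which is the symmetrized consequence of the first: adding the first-order inequality for the pair $(S_1,S_2)$ to the same inequality with the roles of $S_1$ and $S_2$ swapped makes the function values cancel and leaves exactly $\left(\frac{\partial q}{\partial S_1}-\frac{\partial q}{\partial S_2}\right)\cdot(S_1-S_2)\geqslant 0$, again strict unless $S_1=S_2$.

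The one genuine subtlety is the replacement of $\frac{\partial q}{\partial S_1}$ by $\sP\frac{\partial q}{\partial S_1}$ in each displayed equation. Here I would invoke \eqref{proj_invrt}: since $S_2-S_1\in\mathcal{Q}$ (differences of symmetric traceless tensors are symmetric traceless), and likewise $S_1-S_2\in\mathcal{Q}$, the contraction of any second-order tensor against this difference equals the contraction of its projection against the same difference. This justifies both equalities at once and explains why the bare gradient $\frac{\partial q}{\partial S_1}$ and its projection are interchangeable inside these dot products. I would state this observation first, so that afterward it suffices to work with the unprojected gradient.

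Thus the concrete order of steps is: first note $S_2-S_1\in\mathcal{Q}$ and apply \eqref{proj_invrt} to establish the two equalities; then obtain the first-order convexity inequality, reducing it to a one-variable statement along the segment $t\mapsto S_1+t(S_2-S_1)$ by considering $\phi(t)=q(S_1+t(S_2-S_1))$, whose convexity on $[0,1]$ follows from the strict convexity of $q$ and gives $\phi(1)-\phi(0)\geqslant \phi'(0)$; finally derive the monotonicity inequality by adding the first-order inequality to its swap. The main technical care, rather than a real obstacle, is the handling of the singular potential: the tangent contraction $\frac{\partial q}{\partial S_1}\cdot(S_2-S_1)$ and the one-variable reduction are only legitimate for $S_1,S_2\in\mathcal{Q}_{\text{phys}}$, where $q$ is smooth; along the segment one must also know that $S_1+t(S_2-S_1)$ stays in $\mathcal{Q}_{\text{phys}}$, which is exactly the convexity of the domain $\mathcal{Q}_{\text{phys}}$ already proved in Proposition \ref{qeprop}. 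The equality cases track the strict convexity of $q$ directly: in the first inequality $\phi$ is strictly convex unless $S_1=S_2$, and in the second the sum of two nonnegative strict inequalities vanishes only when $S_1=S_2$.
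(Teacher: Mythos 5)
Your proposal is correct and takes essentially the same approach as the paper: both arguments reduce to the one-variable convex function $h(t)=q\big(S_1+t(S_2-S_1)\big)$ along the segment (legitimate because $\mathcal{Q}_{\text{phys}}$ is convex), obtain the first inequality from $h(1)-h(0)\geqslant h'(0)$, justify the $\sP$ equalities via \eqref{proj_invrt}, and get strictness from strict convexity. The only cosmetic difference is that you derive the gradient-monotonicity inequality by adding the first-order inequality to its swapped version, whereas the paper reads it off as $h'(1)\geqslant h'(0)$; these are equivalent one-line consequences of the same convexity.
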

\begin{proof}
  Since $q$ is strictly convex, the function 
  \begin{align}
    h(t)=q(tS_2+(1-t)S_1)
  \end{align}
  is convex about $t$, and is strictly convex if $S_1\ne S_2$. Its derivative is calculated as 
  \begin{align}
    h'(t)=\left.\frac{\partial q}{\partial Q}\right|_{Q=tS_2+(1-t)S_1}\cdot (S_2-S_1). 
  \end{align}
  By $h(1)-h(0)\geqslant h'(0)$ and $h'(1)\geqslant h'(0)$,  we obtain the two inequalities.
  When $S_1\ne S_2$, the strict inequalities hold because $h$ is strictly convex. 
\end{proof}
In addition, let us state the definition of subdifferential for a strictly convex functional. 
\begin{definition}
\label{subdiff}
  Let $F[\phi]$ be a strictly convex functional about $\phi$. The subdifferential of $F$ at $\phi_0$ is a set consisting of all $\psi$ that satisfies: for arbitrary $\phi_1$, it holds 
  \begin{equation}
    F[\phi_1]-F[\phi_0]\geqslant (\psi,\phi_1-\phi_0). \nonumber
  \end{equation}
\end{definition}
Note that it is possible that the subdifferential is an empty set. 

\subsection{Time discretization}
\subsubsection{First-order scheme}
We begin with a first-order scheme, 
\begin{align}
\label{tfirst}
  \frac{Q_{ij}^{n+1}-Q_{ij}^n}{\delta t}=&-\sP\left(\frac{\partial q}{\partial Q^{n+1}}\right)_{ij}+c_{02}Q_{ij}^n+c_{21}\Delta Q_{ij}^{n+1}\nonumber\\
  &+\frac{c_{22}}{2}\big(\partial_{ik}Q^{n+1}_{kj}+\partial_{jk}Q^{n+1}_{ki}-\frac 23\partial_{kl}Q^{n+1}_{kl}\delta_{ij}\big),\nonumber\\
  Q^{n+1}|_{\partial \Omega}=&\,Q_{\text{bnd}}. 
\end{align}
In the above, we discretize the quasi-entropy implicitly, leading to a nonlinear scheme. We shall prove that the scheme indeed possesses the properties we desire. 
\begin{theorem}\label{tfirst_thm}
  For arbitrary $\delta t$, the scheme \eqref{tfirst} has a unique solution in $H^1(\Omega)$ satisfying $Q^{n+1}(\bm{r})\in \mathcal{Q}_{\text{phys}}$ almost everywhere in $\Omega$ and $E[Q^{n+1}]<+\infty$. Moreover, it satisfies the energy law, 
  \begin{align}
    E[Q^{n+1}]+\frac{1+c_{02}\delta t}{2\delta t}\|Q^{n+1}-Q^n\|^2+\int_{\Omega}f_e(Q^{n+1}-Q^n)\dd\bm{r}\leqslant E[Q^n]. \label{enfirst}
  \end{align}
\end{theorem}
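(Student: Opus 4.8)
The plan is to recast the scheme \eqref{tfirst} as the Euler--Lagrange equation of a single strictly convex functional and then extract all three assertions from that variational structure. On the affine set $\mathcal{A}=\{Q\in H^1(\Omega;\mathcal{Q}):Q|_{\partial\Omega}=Q_{\text{bnd}}\}$ I would introduce
\[
J[Q]=\frac{1}{2\delta t}\|Q-Q^n\|^2+\int_\Omega q(Q)\dd\bm{r}+\int_\Omega f_e(Q)\dd\bm{r}-c_{02}(Q^n,Q),
\]
whose first variation is exactly \eqref{tfirst}; freezing the concave bulk term $-\tfrac12 c_{02}|Q|^2$ at $Q^n$ is what produces the explicit $+c_{02}Q^n$, so this is the natural convex--concave splitting of $E$. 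The functional $J$ is strictly convex, inheriting this from the $\tfrac1{2\delta t}\|Q-Q^n\|^2$ term together with the strict convexity of $q$ (Proposition \ref{qeprop}); the quadratic elastic form is convex by \eqref{elasc}, and the last term is linear.

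For the first two assertions I would run the direct method of the calculus of variations. Coercivity on $H^1$ follows since the elastic energy controls $\|\nabla Q\|^2$ through \eqref{elasc}, the quadratic term controls $\|Q\|^2$, $q$ is bounded below by Proposition \ref{qeprop}, and the linear term is absorbed by Young's inequality. Weak lower semicontinuity holds because $q$, extended by $+\infty$ off $\mathcal{Q}_{\text{phys}}$, is convex and lower semicontinuous, so $\int_\Omega q(Q)\dd\bm{r}$ is weakly l.s.c., while the remaining terms are weakly l.s.c. or weakly continuous. A minimizing sequence then has a weak limit $Q^{n+1}\in\mathcal{A}$, and strict convexity makes it the unique minimizer. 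To obtain $Q^{n+1}\in\mathcal{Q}_{\text{phys}}$ a.e. and $E[Q^{n+1}]<+\infty$, I would compare with the finite-energy competitor guaranteed by \eqref{bndcon} (or with $Q^n$ for $n\geqslant1$, by induction), giving $J[Q^{n+1}]<+\infty$; finiteness of $J$ forces $\int_\Omega q(Q^{n+1})\dd\bm{r}<+\infty$, and the barrier property in Proposition \ref{qeprop} then forces $Q^{n+1}(\bm{r})\in\mathcal{Q}_{\text{phys}}$ for a.e. $\bm{r}$.

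For the energy law I would test the Euler--Lagrange equation against the increment $W:=Q^{n+1}-Q^n$, which lies in $\mathcal{Q}$ and vanishes on $\partial\Omega$. The time-difference term yields $\tfrac1{\delta t}\|W\|^2$; the quasi-entropy term, after using \eqref{proj_invrt} to drop $\sP$ against $W\in\mathcal{Q}$ and then Lemma \ref{qent_ineq}, is bounded below by the increment of $\int_\Omega q\dd\bm{r}$; the elastic terms, integrated by parts with no boundary contribution since $W|_{\partial\Omega}=0$, produce through the polarization identity for the quadratic elastic form precisely $\int_\Omega f_e(Q^{n+1})-f_e(Q^n)+f_e(W)\dd\bm{r}$, which is the origin of the extra dissipation term $\int_\Omega f_e(W)\dd\bm{r}$ in \eqref{enfirst}; finally the explicit contribution $c_{02}(Q^n,W)$ is merged with the two $\tfrac{c_{02}}2\|\cdot\|^2$ pieces of $E$ by completing the square, which supplies the $c_{02}\delta t$ part of the coefficient. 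Collecting these contributions yields \eqref{enfirst}; in fact the time-difference term alone contributes $\tfrac1{\delta t}\|W\|^2$, so the bound so obtained is at least as strong as stated. Because the singular derivative $\partial q/\partial Q^{n+1}$ enters only through the convexity inequality of Lemma \ref{qent_ineq} (equivalently the subdifferential inequality of Definition \ref{subdiff}), this step needs no pointwise control of $q$ near $\partial\mathcal{Q}_{\text{phys}}$.

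I expect the singular potential to be the main obstacle. The two delicate points are (i) confirming that the minimizer genuinely lies in the interior $\mathcal{Q}_{\text{phys}}$ almost everywhere, so that the first variation and the convexity inequalities are meaningful, and (ii) making the testing against $W$ rigorous even though $\partial q/\partial Q^{n+1}$ is unbounded. Both are handled most cleanly by working throughout with the minimization and subdifferential characterization rather than with the pointwise PDE, so that the only properties of $q$ actually invoked are its convexity, lower bound, and barrier behaviour established in Proposition \ref{qeprop}.
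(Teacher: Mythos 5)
Your variational framework coincides with the paper's: the functional $J$ you introduce is exactly the functional $F$ in \eqref{En_scheme}, your direct-method existence argument (coercivity from \eqref{elasc}, lower bound of $q$, barrier property forcing $Q^{n+1}\in\mathcal{Q}_{\text{phys}}$ a.e.) matches the paper's, apart from the immaterial substitution of weak lower semicontinuity of the convex integrand for the paper's Fatou-lemma argument along an a.e.\ convergent subsequence, and your derivation of the energy law (testing with $W=Q^{n+1}-Q^n$, Lemma \ref{qent_ineq} for the singular term, polarization identities for the quadratic terms) is the paper's derivation, including the correct observation that the raw coefficient obtained is stronger than the one stated in \eqref{enfirst}.

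There is, however, a genuine gap in the uniqueness claim. Strict convexity of $J$ gives uniqueness of the \emph{minimizer}, but Theorem \ref{tfirst_thm} asserts uniqueness of solutions to the \emph{scheme} \eqref{tfirst}, and for a singular potential these are not interchangeable: a solution of the scheme need not be a point at which the subdifferential of $J$ is nonempty, so it need not be captured by the minimization problem at all. The paper is explicit about this (``it is possible that there is a solution where the subdifferential of the functional $F$ is an empty set'') and therefore supplies a separate argument: subtract two putative solutions, test the difference equation with $S=Q^{n+1,(1)}-Q^{n+1,(2)}$ (which vanishes on $\partial\Omega$), and use the monotonicity inequality of Lemma \ref{qent_ineq} together with $\int_{\Omega}f_e(S)\dd\bm{r}\geqslant 0$ to force $S=0$ a.e. Your proposal contains no such step. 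The same issue affects your existence assertion in the other direction: your statement that the first variation of $J$ ``is exactly \eqref{tfirst}'' is precisely what must be proved, namely that at the minimizer the subdifferential --- which contains zero --- consists of the single element given by the residual of \eqref{tfirst}; the paper discharges this by invoking Lemma 3.3 of \cite{Liu2021Regularity}. You correctly flag both points as delicate, but your proposed resolution (staying entirely within the minimization/subdifferential formulation) does not close them, because the theorem is a statement about the scheme, not about the minimization problem.
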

\begin{proof}
  We could reformulate the scheme as the unique minimizer of a strictly convex functional. 

  Consider the functional 
  \begin{align}
    F[Q^{n+1}]=\frac{1}{2\delta t}\|Q^{n+1}-Q^n\|^2+\int_{\Omega} q(Q^{n+1})+f_e(Q^{n+1})-c_{02}Q^n\cdot Q^{n+1}\dd\bm{r}. \label{En_scheme}
  \end{align}
  This functional is strictly convex about $Q^{n+1}$ by looking into each term:
  \begin{itemize}
  \item It is obvious that $\frac{1}{2\delta t}\|Q^{n+1}-Q^n\|^2$ is convex.
  \item The quasi-entropy term $\int_{\Omega} q(Q^{n+1})\dd\bm{r}$ is strictly convex. 
  \item By \eqref{elasc}, the quadratic elastic term $\int_{\Omega}f_e(Q^{n+1})\dd\bm{r}$ is convex. 
  \item The last term $-c_{02}(Q^n,Q^{n+1})$ is linear, thus convex. 
  \end{itemize}
  Furthermore, the first three terms are bounded from below, and $Q^{n+1}\in L^{\infty}$ can be controlled by the quadratic terms. 
  Therefore, $F[Q^{n+1}]$ is bounded from below. 
  Next, we show that there exists a unique minimizer of \eqref{En_scheme}. Recall that we have assumed \eqref{bndcon}. Since 
  \begin{align*}
    F[Q^{n+1}]-E[Q^{n+1}]=&\frac{1}{2\delta t}\|Q^{n+1}-Q^n\|^2
    +\frac 12 c_{02}\|Q^{n+1}\|^2 -c_{02}(Q^n,Q^{n+1})\\
    \leqslant &C(\|Q^{n}\|^2+\|Q^{n+1}\|^2). 
  \end{align*}
  Thus, we set $Q^{n+1}=Q^{(0)}$ to arrive at $F[Q^{(0)}]<+\infty$, so that $-\infty<\inf F[Q^{n+1}]<+\infty$. 
  Therefore, let us choose a sequence $Q^{(k)}$ such that 
  \begin{align*}
    \lim_{k\to+\infty}F[Q^{(k)}]=\inf F[Q^{n+1}]. 
  \end{align*}
  By \eqref{elasc}, it is clear that the functional $F$ can control the $H^1$ norm, so that the sequence $Q^{(k)}$ is $H^1$ bounded. Therefore, we could choose a subsequence that converges $H^1$-weakly to $\bar{Q}$. This subsequence satisfies 
  \begin{align}
    \lim_{k\to+\infty}F_1[Q^{(k)}]=\inf F_1[\bar{Q}]. \label{quad_cont}
  \end{align}
  where
  \begin{align*}
    F_1[Q^{n+1}]=\frac{1}{2\delta t}\|Q^{n+1}-Q^n\|^2+\int_{\Omega} f_e(Q^{n+1})-c_{02}Q^n\cdot Q^{n+1}\dd\bm{r}. 
  \end{align*}
  Then, using Riesz Theorem, we could further choose an a.e. convergent subsequence. Since $q$ is bounded from below, we use Fatou's lemma to derive that
  \begin{align}
    \int_{\Omega}q(\bar{Q})\dd\bm{r}\leqslant \liminf_{k\to+\infty}\int_{\Omega}q(Q^{(k)})\dd\bm{r}. \label{qFatou}
  \end{align}
  Combining \eqref{quad_cont} and \eqref{qFatou}, we arrive at 
  \begin{align}
    \inf F[Q^{n+1}]\leqslant F[\bar{Q}]\leqslant \liminf_{k\to+\infty}F_1[Q^{(k)}]+\lim_{k\to+\infty}\int_{\Omega}q(Q^{(k)}) \dd \bm{r}\leqslant \inf F[Q^{n+1}], 
  \end{align}
  which implies that $\bar{Q}$ is a minimizer of $F[Q^{n+1}]$.
  Its uniqueness follows directly from strict convexity. 
  
  We verify that $\bar{Q}$ is a.e. in $\mathcal{Q}_{\text{phys}}$. 
  From $F[Q^{(k)}]<+\infty$, we deduce that $\int_{\Omega}q(Q^{(k)})<+\infty$.
  It implies that each $Q^{(k)}$ lies within $\mathcal{Q}_{\text{phys}}$ a.e. in $\Omega$. 
  Therefore, the limiting function $\bar{Q}$ satisfies $\lambda(\bar{Q})\in [-1/3,2/3]$ a.e. in $\Omega$.
  The measure of the set where $\lambda(\bar{Q})=1/3$ or $-2/3$ must be zero. 
  Otherwise, we deduce that $F[\bar{Q}]\geqslant \int_{\Omega}q(\bar{Q}) \dd \bm{r} -C=+\infty$, which is a contradiction. 

  It remains to show that the minimizer is a solution to the scheme.
  At the unique global minimizer, the zero function is a subdifferential by definition. 
  We could use similar arguments of \cite[Lemma 3.3]{Liu2021Regularity} to claim that: when subdifferential exists, it only has one element that equals to the left-hand side of \eqref{tfirst} minus its right-hand side. 
  Thus, the difference of both-hand sides of \eqref{tfirst} must be zero, so that the unique minimizer gives a solution to the scheme. 
  
  Although $F[Q^{n+1}]$ has a unique minimizer, it does not necessarily imply that the solution to the scheme \eqref{tfirst} is unique. This is because it is possible that there is a solution where the subdifferential of the functional $F$ is an empty set. 
  For this reason, we still need to show the uniqueness of the solution to \eqref{tfirst}. 
  Suppose that we have two solutions $Q^{n+1,(1)}$ and $Q^{n+1,(2)}$. Denote $S=Q^{n+1,(1)}-Q^{n+1,(2)}$. Then we have 
  \begin{align}
    \frac{S_{ij}}{\delta t}=&-\sP\left(\frac{\partial q}{\partial Q^{n+1,(1)}}-\frac{\partial q}{\partial Q^{n+1,(2)}}\right)_{ij}+c_{21}\Delta S_{ij}^{n+1}\nonumber\\
    &+\frac{c_{22}}{2}\big(\partial_{ik}S^{n+1}_{kj}+\partial_{jk}S^{n+1}_{ki}-\frac 23\partial_{kl}S^{n+1}_{kl}\delta_{ij}\big), \label{Sunique}\\
    S|_{\partial \Omega}=&0. \nonumber
  \end{align}
  Taking the inner product of \eqref{Sunique} with $S$ and noting \eqref{proj_invrt}, we obtain 
  \begin{align}
    \frac{1}{\delta t}\|S\|^2+\int_{\Omega}\sP\left(\frac{\partial q}{\partial Q^{n+1,(1)}}-\frac{\partial q}{\partial Q^{n+1,(2)}}\right)\cdot (Q^{n+1,(1)}-Q^{n+1,(2)}) +2f_e(S)\dd\bm{r}=0. \label{Seng}
  \end{align}
  Since $S$ is zero on $\partial \Omega$, it is not difficult to check that $\int_{\Omega}f_e(S)\dd\bm{r}\geqslant 0$. Moreover, since both $Q^{n+1,(1)}$ and $Q^{n+1,(2)}$ are a.e. within $\mathcal{Q}_{\text{phys}}$, we use Lemma \ref{qent_ineq} to obtain that 
  \begin{align}
    \sP\left(\frac{\partial q}{\partial Q^{n+1,(1)}}-\frac{\partial q}{\partial Q^{n+1,(2)}}\right)\cdot (Q^{n+1,(1)}-Q^{n+1,(2)})\geqslant 0,\quad \text{a.e. in }\Omega. 
  \end{align}
  Thus, the left-hand side of \eqref{Seng} is nonnegative, and takes zero only when $S=0$ a.e. in $\Omega$. 
  
  Now we turn to energy dissipation. The case $E[Q^n]=+\infty$ is obvious. We assume $E[Q^n]<+\infty$, so that $Q^n\in \mathcal{Q}_{\text{phys}}$ a.e. in $\Omega$. 
  Taking inner product of \eqref{tfirst} with $Q^{n+1}-Q^n$, we arrive at 
  \begin{align}
    \frac{\|Q^{n+1}-Q^n\|^2}{\delta t}=&-\left(\sP\Big(\frac{\partial q}{\partial Q^{n+1}}\Big),Q^{n+1}-Q^n\right)+c_{02}(Q^n,Q^{n+1}-Q^n)\nonumber\\
    &+c_{21}(\Delta Q^{n+1},Q^{n+1}-Q^n)\nonumber\\
    &+c_{22}\int_{\Omega}\partial_{ik}Q^{n+1}_{kj}(Q_{ij}^{n+1}-Q_{ij}^n)\dd\bm{r}. \label{enfirst1}
  \end{align}
  Using Lemma \ref{qent_ineq} we have
  \begin{align*}
    -\sP\Big(\frac{\partial q}{\partial Q^{n+1}}\Big)\cdot(Q^{n+1}-Q^n)\leqslant -q(Q^{n+1})+q(Q^n),\quad \text{a.e. in }\Omega. 
  \end{align*}
  For the other terms on the right-hand side, some direct calculations along with $Q^{n+1}-Q^n=0$ on $\partial \Omega$ yield 
  \begin{align*}
    (Q^n,Q^{n+1}-Q^n)=&\frac 12 (\|Q^{n+1}\|^2-\|Q^n\|^2-\|Q^{n+1}-Q^n\|^2), \\
    (\Delta Q^{n+1},Q^{n+1}-Q^n)=&-\Big(\nabla Q^{n+1},\nabla(Q^{n+1}-Q^n)\Big)\nonumber\\
    =&\frac 12 (-\|\nabla Q^{n+1}\|^2+\|\nabla Q^n\|^2-\|\nabla (Q^{n+1}-Q^n)\|^2), \\
    \int_{\Omega}\partial_{ik}Q^{n+1}_{kj}(Q_{ij}^{n+1}-Q_{ij}^n)\dd\bm{r}=&
    -\int_{\Omega}\partial_jQ^{n+1}_{jk}\partial_i(Q_{ik}^{n+1}-Q_{ik}^n)\dd\bm{r}\nonumber\\
    =&\frac 12\int_{\Omega}-\partial_iQ^{n+1}_{ik}\partial_jQ^{n+1}_{jk}+\partial_iQ^n_{ik}\partial_jQ^n_{jk}\nonumber\\
    &\qquad -\partial_i(Q_{ik}^{n+1}-Q_{ik}^n)\partial_j(Q_{jk}^{n+1}-Q_{jk}^n)\dd\bm{r}. 
  \end{align*}
  Taking the inequalities above into \eqref{enfirst1}, we arrive at \eqref{enfirst}. 
\end{proof}

We then carry out the error analysis of the scheme \eqref{tfirst}. Define $R^n(\bm{r})=Q^n(\bm{r})-Q(\bm{r},t^n)$. 
\begin{theorem}\label{errfirst_thm}
  Assume that $\partial_tQ, \partial_{tt}Q\in L^2\big(0,T;L^2(\Omega)\big)$. For the scheme \eqref{tfirst}, the following error estimate holds, 
\begin{equation}
    \|R^n\|^2\leqslant C\exp\Big((1-C\delta t)^{-1}t^n\Big)\delta t^2\int_0^{t^n}\|\partial_tQ\|^2+\|\partial_{tt}Q\|^2\dd t, \label{errfirst}
\end{equation}
where the constant $C\sim c_{02}$. 
\end{theorem}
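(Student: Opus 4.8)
The plan is to run the standard energy method for error estimates, the decisive feature being that the singular nonlinear term is never estimated quantitatively: only its monotonicity is used.

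First I would derive the error equation. Evaluating the continuous gradient flow at $t^{n+1}$ and applying Taylor's theorem with integral remainder, the exact solution satisfies
\[
\frac{Q(t^{n+1})-Q(t^n)}{\delta t}=-\sP\frac{\partial q}{\partial Q(t^{n+1})}+c_{02}Q(t^{n+1})+c_{21}\Delta Q(t^{n+1})+\frac{c_{22}}{2}\big(\ldots\big)+\tau^n,
\]
where the truncation error $\tau^n=-\tfrac{1}{\delta t}\int_{t^n}^{t^{n+1}}(t-t^n)\partial_{tt}Q\dd t$ obeys $\|\tau^n\|^2\leqslant\tfrac{\delta t}{3}\int_{t^n}^{t^{n+1}}\|\partial_{tt}Q\|^2\dd t$ by Cauchy--Schwarz. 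Subtracting this from the scheme \eqref{tfirst} and recalling $R^n=Q^n-Q(\bm{r},t^n)$ gives an equation for $(R^{n+1}-R^n)/\delta t$. The only point needing care is the explicit $c_{02}$ term: I would split $c_{02}Q^n-c_{02}Q(t^{n+1})=c_{02}R^n+c_{02}\big(Q(t^n)-Q(t^{n+1})\big)$, so that the lag of the explicit treatment contributes $-c_{02}\int_{t^n}^{t^{n+1}}\partial_tQ\dd t$, an $O(\delta t)$ quantity controlled by $\partial_tQ$.

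Next I would take the $L^2$ inner product of the error equation with $R^{n+1}$. The left side yields $\tfrac{1}{2\delta t}\big(\|R^{n+1}\|^2-\|R^n\|^2+\|R^{n+1}-R^n\|^2\big)$. On the right side two terms have a favorable sign and can simply be discarded. The quasi-entropy difference pairs with $R^{n+1}=Q^{n+1}-Q(t^{n+1})$, and since both arguments lie in $\mathcal{Q}_{\text{phys}}$ almost everywhere (the discrete one by Theorem \ref{tfirst_thm}, the exact one by the regularity hypothesis), the second monotonicity inequality of Lemma \ref{qent_ineq} gives $-\big(\sP(\tfrac{\partial q}{\partial Q^{n+1}}-\tfrac{\partial q}{\partial Q(t^{n+1})}),R^{n+1}\big)\leqslant0$. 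The elastic contribution equals $-2\int_{\Omega}f_e(R^{n+1})\dd\bm{r}$, and because $R^{n+1}$ vanishes on $\partial\Omega$ (both solutions satisfy the same Dirichlet data \eqref{Qbnd}) the coercivity \eqref{elasc} renders it $\leqslant0$ as well. This is the heart of the argument and the place I expect the only genuine difficulty: the singular potential admits no Lipschitz bound near $\partial\mathcal{Q}_{\text{phys}}$, so a direct estimate of the nonlinear term is hopeless; one must instead exploit its monotone structure, which supplies exactly the sign needed.

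Finally, after discarding the nonpositive terms and the nonnegative $\|R^{n+1}-R^n\|^2$, I am left with $\tfrac{1}{2\delta t}\big(\|R^{n+1}\|^2-\|R^n\|^2\big)\leqslant c_{02}(R^n,R^{n+1})+c_{02}\big(Q(t^n)-Q(t^{n+1}),R^{n+1}\big)-(\tau^n,R^{n+1})$. Applying Cauchy--Schwarz and Young's inequality termwise, together with $\|Q(t^n)-Q(t^{n+1})\|^2\leqslant\delta t\int_{t^n}^{t^{n+1}}\|\partial_tQ\|^2\dd t$ and the bound on $\|\tau^n\|$, produces the recursion $(1-C\delta t)\|R^{n+1}\|^2\leqslant(1+C\delta t)\|R^n\|^2+C\delta t^2\int_{t^n}^{t^{n+1}}\big(\|\partial_tQ\|^2+\|\partial_{tt}Q\|^2\big)\dd t$ with $C\sim c_{02}$. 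For $\delta t<1/C$ I would divide by $1-C\delta t$ and iterate a discrete Gr\"onwall inequality from $R^0=0$; bounding $\big(\tfrac{1+C\delta t}{1-C\delta t}\big)^{n}\leqslant\exp\big((1-C\delta t)^{-1}Ct^n\big)$ then yields precisely \eqref{errfirst}.
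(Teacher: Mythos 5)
Your proposal is correct and follows essentially the same route as the paper's proof: the same error equation with truncation errors $T_1^n$ (Taylor remainder) and $T_2^n$ (lag of the explicit $c_{02}$ term), testing with $R^{n+1}$, discarding the singular term via the monotonicity in Lemma \ref{qent_ineq} and the elastic term via the sign of $\int_{\Omega}f_e(R^{n+1})\dd\bm{r}$ under the homogeneous Dirichlet data, and closing with a discrete Gr\"onwall argument. The only differences are cosmetic (explicit constants in the truncation bounds and the Gr\"onwall growth factor), so no changes are needed.
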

\begin{proof}
  We deduce the equation for $R^n$, 
  \begin{align}
    \frac{R_{ij}^{n+1}-R_{ij}^n}{\delta t}+T_1^n = &-\sP\left(\frac{\partial q}{\partial Q^{n+1}(\bm{r})}-\frac{\partial q}{\partial Q(\bm{r},t^{n+1})}\right)_{ij}+c_{02}R_{ij}^n+c_{21}\Delta R_{ij}^{n+1}\nonumber\\
    &+\frac{c_{22}}{2}\left(\partial_{ik}R^{n+1}_{kj}+\partial_{jk}R^{n+1}_{ki}-\frac{2}{3}\partial_{kl}R^{n+1}_{kl}\delta_{ij}\right)+c_{02}T_2^n. \label{Rfirst}
  \end{align}
  The truncation errors are given by 
  \begin{align}
    T_1^n = &\frac{1}{\delta t}\Big(Q(\bm{r},t^{n+1})-Q(\bm{r},t^n)\Big)-\partial_tQ(\bm{r},t^{n+1})=\frac{1}{\delta t}\int_{t^n}^{t^{n+1}}(t^n-s)\partial_{tt}Q\dd s,\nonumber\\
    T_2^n = &Q(\bm{r},t^{n+1})-Q(\bm{r},t^n)=\int_{t^n}^{t^{n+1}}\partial_tQ\dd s. \label{t-trunc}
  \end{align}
  Take inner product of \eqref{Rfirst} with $R^{n+1}$.
  Notice that 
  \begin{equation}
    \sP\Big(-\frac{\partial q}{\partial Q^{n+1}(\bm{r})}+\frac{\partial q}{\partial Q(\bm{r},t^{n+1})}\Big)\cdot \Big(Q^{n+1}(\bm{r})-Q(\bm{r},t^{n+1})\Big)\leqslant 0. \nonumber
  \end{equation}
  Moreover, since $R^{n+1}|_{\partial\Omega}=0$, we deduce that 
  \begin{align*}
    &\int_{\Omega}R_{ij}^{n+1}\bigg(c_{21}\Delta R_{ij}^{n+1}+\frac{c_{22}}{2}\left(\partial_{ik}R^{n+1}_{kj}+\partial_{jk}R^{n+1}_{ki}-\frac{2}{3}\partial_{kl}R^{n+1}_{kl}\delta_{ij}\right)\bigg)\dd\bm{r}\nonumber\\
    =&-\int_{\Omega}f_e(R^{n+1})\dd\bm{r}\leqslant 0. 
  \end{align*}
  Therefore, we could derive that 
  \begin{align*}
    \frac{1}{2\delta t}(\|R^{n+1}\|^2-\|R^n\|^2)\leqslant (c_{02}+1)(\|R^{n+1}\|^2+\|R^n\|^2)+(c_{02}+1)(\|T_1^n\|^2+\|T_2^n\|^2). 
  \end{align*}
  The truncation errors can be estimated as 
  \begin{align*}
    \|T_1^n\|^2\leqslant C\delta t\int_{t^n}^{t^{n+1}}\|\partial_{tt}Q\|^2\dd s,\qquad
    \|T_2^n\|^2\leqslant C\delta t\int_{t^n}^{t^{n+1}}\|\partial_tQ\|^2\dd s.
  \end{align*}
  Using Gronwall's inequality (see, for example, \cite{Quarter2008}), we deduce \eqref{errfirst}. 
\end{proof}

\subsubsection{Second-order scheme}
We could build a second-order scheme based on BDF2 as 
\begin{align}
\label{tsecond}
  \frac{3Q_{ij}^{n+1}-4Q_{ij}^n+Q_{ij}^{n-1}}{2\delta t}=&-\sP\left(\frac{\partial q}{\partial Q^{n+1}}\right)_{ij}+c_{02}(2Q_{ij}^n-Q_{ij}^{n-1})+c_{21}\Delta Q_{ij}^{n+1}\nonumber\\
  &+\frac{c_{22}}{2}\left(\partial_{ik}Q^{n+1}_{kj}+\partial_{jk}Q^{n+1}_{ki}-\frac 23\partial_{kl}Q^{n+1}_{kl}\delta_{ij}\right). 
\end{align}
Following the similar routine in the last section, we could prove the theorems below. 
\begin{theorem}\label{tsecond_thm}
  For arbitrary $\delta t$, the scheme \eqref{tsecond} has a unique solution in $H^1(\Omega)$ satisfying $Q^{n+1}(\bm{r})\in \mathcal{Q}_{\text{phys}}$ a.e. in $\Omega$ and $E[Q^{n+1}]<+\infty$. Furthermore, if $c_{02}\delta t\leqslant 2$, the following energy dissipation holds, 
  \begin{align}
    E[Q^{n+1}]+\frac{1+2c_{02}\delta t}{4\delta t}\|Q^{n+1}-Q^n\|^2+\int_{\Omega}f_e(Q^{n+1}-Q^n)\dd\bm{r}\nonumber\\
    \leqslant E[Q^n]+\frac{1+2c_{02}\delta t}{4\delta t}\|Q^n-Q^{n-1}\|^2. \label{ensecond}
  \end{align}
\end{theorem}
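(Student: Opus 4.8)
The plan is to follow the same two-stage strategy used for Theorem~\ref{tfirst_thm}: first establish unique solvability together with the physical constraints by recasting \eqref{tsecond} as the subdifferential condition of a strictly convex functional, then derive the modified energy law by testing against $Q^{n+1}-Q^n$.

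For solvability, I would minimize
\begin{align*}
  F[Q^{n+1}] = &\frac{1}{4\delta t}\|Q^{n+1}-2Q^n+Q^{n-1}\|^2 + \frac{1}{2\delta t}\|Q^{n+1}-Q^n\|^2 \\
  &+ \int_{\Omega} q(Q^{n+1}) + f_e(Q^{n+1}) - c_{02}(2Q^n-Q^{n-1})\cdot Q^{n+1} \dd\bm{r}.
\end{align*}
A direct computation shows that the variational derivative of the two quadratic time-difference terms is exactly $(3Q^{n+1}-4Q^n+Q^{n-1})/(2\delta t)$, so a minimizer of $F$ solves \eqref{tsecond}. Each term of $F$ is convex (the quasi-entropy strictly so, the elastic part by \eqref{elasc}, the cross term linear), hence $F$ is strictly convex, bounded below, and coercive in $H^1$, since the leading quadratic controls $\|Q^{n+1}\|$ and $f_e$ controls the gradient while the linear $c_{02}$ term is absorbed. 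Existence of a minimizer lying in $\mathcal{Q}_{\text{phys}}$ a.e.\ with finite energy then follows verbatim from Theorem~\ref{tfirst_thm}: extract an $H^1$-weakly convergent minimizing sequence, pass to the limit using weak lower semicontinuity of the quadratic terms and Fatou's lemma for the lower-bounded quasi-entropy, and rule out $\lambda(\bar Q)\in\{-1/3,2/3\}$ on a positive-measure set by finiteness of $F$. Uniqueness of the scheme solution (as opposed to the minimizer) again reduces to testing the difference equation with $S=Q^{n+1,(1)}-Q^{n+1,(2)}$ and invoking the monotonicity in Lemma~\ref{qent_ineq} together with $\int_\Omega f_e(S)\dd\bm{r}\geqslant 0$.

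For the energy law I would take the inner product of \eqref{tsecond} with $Q^{n+1}-Q^n$. Writing $D^{n+1}=Q^{n+1}-Q^n$ and $D^n=Q^n-Q^{n-1}$, the left-hand side is treated by the exact identities $3Q^{n+1}-4Q^n+Q^{n-1}=3D^{n+1}-D^n$ and $(D^n,D^{n+1})=\tfrac12\|D^{n+1}\|^2+\tfrac12\|D^n\|^2-\tfrac12\|D^{n+1}-D^n\|^2$, which produce the difference-of-squares structure matching \eqref{ensecond}. The quasi-entropy term is bounded by convexity via Lemma~\ref{qent_ineq}, contributing $-\int_\Omega q(Q^{n+1})-q(Q^n)\dd\bm{r}$, and the elastic terms telescope exactly as in Theorem~\ref{tfirst_thm}, producing $\int_\Omega f_e(Q^n)-f_e(Q^{n+1})-f_e(D^{n+1})\dd\bm{r}$. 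The quadratic part of the bulk energy is reconstructed from $c_{02}(Q^n,D^{n+1})=\tfrac{c_{02}}2(\|Q^{n+1}\|^2-\|Q^n\|^2-\|D^{n+1}\|^2)$; collecting these identities assembles $E[Q^{n+1}]-E[Q^n]$ on the correct side.

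I expect the main obstacle to be controlling the explicit extrapolation $c_{02}(2Q^n-Q^{n-1})$, which is the sole source of the time-step restriction. After all reconstructions, the residual requirement is that
\begin{align*}
  \frac{5}{4\delta t}\|D^{n+1}\|^2 - \Big(c_{02}+\frac{1}{2\delta t}\Big)(D^{n+1},D^n) + \Big(\frac{1}{4\delta t}+\frac{c_{02}}{2}\Big)\|D^n\|^2 \geqslant 0
\end{align*}
hold, i.e.\ that the associated $2\times 2$ quadratic form in $(\|D^{n+1}\|,\|D^n\|)$ be positive semidefinite. Its determinant condition simplifies, upon setting $\alpha=c_{02}\delta t$, to $(2\alpha+1)(\alpha-2)\leqslant 0$; since $\alpha>0$ this is precisely $c_{02}\delta t\leqslant 2$. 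Under this condition the residual is nonnegative and the modified energy inequality \eqref{ensecond} follows. The subtlety, in contrast to the first-order case, is that the dissipation cannot be salvaged for arbitrary $\delta t$: the non-dissipative cross term $c_{02}(D^n,D^{n+1})$ must be dominated by the coercive contribution of the BDF2 difference operator, and this domination fails exactly when $c_{02}\delta t>2$.
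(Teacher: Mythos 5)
Your proposal is correct, and its overall architecture is the same as the paper's: a strictly convex functional whose Euler--Lagrange condition is the scheme (your completed-square functional equals the paper's $\frac{3}{4\delta t}\|Q^{n+1}\|^2+\frac{1}{2\delta t}(-4Q^n+Q^{n-1},Q^{n+1})+\dots$ up to an additive constant independent of $Q^{n+1}$), followed by testing \eqref{tsecond} with $Q^{n+1}-Q^n$ and the same convexity and telescoping identities. The one place where you genuinely diverge is the derivation of the time-step restriction. The paper splits the residual as
\begin{equation*}
  \frac{3+c_{02}\delta t}{2\delta t}\|Q^{n+1}-Q^n\|^2-\frac{1+2c_{02}\delta t}{2\delta t}(Q^{n+1}-Q^n,Q^n-Q^{n-1}),
\end{equation*}
uses $3+c_{02}\delta t\geqslant 1+2c_{02}\delta t$ (valid when $c_{02}\delta t\leqslant 2$) to peel off a nonnegative multiple of $\|Q^{n+1}-Q^n\|^2$, and then applies the polarization identity $\|a\|^2-(a,b)=\tfrac12(\|a\|^2-\|b\|^2+\|a-b\|^2)$; this yields $c_{02}\delta t\leqslant 2$ as a sufficient condition by inspection. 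You instead reduce the residual (your coefficients $\frac{5}{4\delta t}$, $c_{02}+\frac{1}{2\delta t}$, $\frac{1}{4\delta t}+\frac{c_{02}}{2}$ are exactly right) to positive semidefiniteness of a $2\times 2$ quadratic form in $(\|D^{n+1}\|,\|D^n\|)$ after Cauchy--Schwarz on the cross term, and the discriminant computation $(2\alpha+1)(\alpha-2)\leqslant 0$ with $\alpha=c_{02}\delta t$ is correct. What your route buys is the additional information that $c_{02}\delta t\leqslant 2$ is not merely sufficient but \emph{sharp} for this estimate: for $c_{02}\delta t>2$ the quadratic form is indefinite, so no constant-coefficient bound of the form \eqref{ensecond} can be extracted by this argument. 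What the paper's route buys is brevity, since the threshold appears directly from a coefficient comparison without any discriminant computation. One minor caution: like the paper, you pass from ``minimizer of $F$'' to ``solution of \eqref{tsecond}'' somewhat quickly; because $q$ is a singular potential, this step rests on the subdifferential argument of Theorem \ref{tfirst_thm} (via \cite[Lemma 3.3]{Liu2021Regularity}), which your appeal to ``verbatim from Theorem \ref{tfirst_thm}'' implicitly covers but is worth stating explicitly.
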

\begin{proof}
  The proof is similar to Theorem \ref{tfirst_thm}. We simply point out the differences.

  For the existence and uniqueness, we also formulate the solution to the scheme as the unique minimizer of a strictly convex functional. The functional is now given by 
  \begin{align}
    F[Q^{n+1}]=&\frac{3}{4\delta t}\|Q^{n+1}\|^2+\frac{1}{2\delta t}(-4Q^n+Q^{n-1},Q^{n+1})\nonumber\\
    &+\int_{\Omega} q(Q^{n+1})+f_e(Q^{n+1})-c_{02}(2Q^n-Q^{n-1})\cdot Q^{n+1}\dd\bm{r}. 
  \end{align}
  Follow the same route of Theorem \ref{tfirst_thm} to arrive at existence and Uniqueness. 
  
  Taking the inner product of \eqref{tsecond} with $Q^{n+1}-Q^n$, we deduce that 
  \begin{align}
    &\frac{1}{2\delta t}(3Q^{n+1}-4Q^n+Q^{n-1},Q^{n+1}-Q^n)\nonumber\\
    =
    &c_{02}(2Q^n-Q^{n-1},Q^{n+1}-Q^n)\nonumber\\
    &-\left(\sP\Big(\frac{\partial q}{\partial Q^{n+1}}\Big),Q^{n+1}-Q^n\right)\nonumber\\
    &+c_{21}\left(\Delta Q^{n+1},Q^{n+1}-Q^n\right)\nonumber\\
    &+c_{22}\int_{\Omega}\partial_{ik}Q^{n+1}_{kj}(Q_{ij}^{n+1}-Q_{ij}^n)\dd\bm{r}. \label{ensecond1}
  \end{align}
  The last three lines are dealt with in the same way as in Theorem \ref{tfirst_thm}.
  For the first two lines, we use the equalities below, 
  \begin{align*}
    (2Q^n-Q^{n-1},Q^{n+1}-Q^n)=&\frac 12 (\|Q^{n+1}\|^2-\|Q^n\|^2)\nonumber\\
    &-\frac 12\|Q^{n+1}-Q^n\|^2+(Q^{n+1}-Q^n,Q^n-Q^{n-1}),\\
    (3Q^{n+1}-4Q^n+Q^{n-1},Q^{n+1}-Q^n)=&\frac 32\|Q^{n+1}-Q^n\|^2-\frac 12(Q^{n+1}-Q^n,Q^n-Q^{n-1}). 
  \end{align*}
  Thus, we arrive at 
  \begin{align*}
    &E[Q^{n+1}]+\int_{\Omega}f_e(Q^{n+1}-Q^n)\dd\bm{r}\nonumber\\
    &\qquad +\frac{3+c_{02}\delta t}{2\delta t}\|Q^{n+1}-Q^n\|^2-\frac{1+2c_{02}\delta t}{2\delta t}(Q^{n+1}-Q^n,Q^n-Q^{n-1})
    \leqslant E[Q^n]. 
  \end{align*}
  When $c_{02}\delta t\leqslant 2$, we have $3+c_{02}\delta t\geqslant 1+2c_{02}\delta t$.
  Use 
  \begin{align*}
    &\|Q^{n+1}-Q^n\|^2-(Q^{n+1}-Q^n,Q^n-Q^{n-1})\nonumber\\
    =&\frac 12 \left(\|Q^{n+1}-Q^n\|^2-\|Q^n-Q^{n-1}\|^2+\|Q^{n+1}-2Q^n+Q^{n-1}\|^2\right)
  \end{align*}
  to obtain \eqref{ensecond}. 
\end{proof}
Similar to Theorem \ref{errfirst_thm}, we have the following error estimate. 
\begin{theorem}\label{errsecond_thm}
  Assume that $\partial_{tt}Q, \partial_{ttt}Q\in L^2\big(0,T;L^2(\Omega)\big)$. For the scheme \eqref{tsecond}, the following error estimate holds, 
\begin{equation}
  \|R^n\|^2+\|2R^n-R^{n-1}\|^2\leqslant C\exp\Big((1-C\delta t)^{-1}t^n\Big)\delta t^4\int_0^{t^n}\|\partial_{tt}Q\|^2+\|\partial_{ttt}Q\|^2\dd t, \label{errsecond}
\end{equation}
where the constant $C\sim c_{02}$. 
\end{theorem}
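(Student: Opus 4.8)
The plan is to follow the route of Theorem~\ref{errfirst_thm}, with the backward-Euler energy identity replaced by the G-stability structure of BDF2. First I would subtract the exact gradient flow evaluated at $t^{n+1}$ from the scheme~\eqref{tsecond} to obtain the equation for $R^{n+1}=Q^{n+1}-Q(\bm{r},t^{n+1})$,
\begin{align*}
  \frac{3R_{ij}^{n+1}-4R_{ij}^n+R_{ij}^{n-1}}{2\delta t}+T_1^n
  =&-\sP\Big(\frac{\partial q}{\partial Q^{n+1}(\bm{r})}-\frac{\partial q}{\partial Q(\bm{r},t^{n+1})}\Big)_{ij}+c_{02}(2R_{ij}^n-R_{ij}^{n-1})\\
  &+c_{21}\Delta R_{ij}^{n+1}+\frac{c_{22}}{2}\big(\partial_{ik}R^{n+1}_{kj}+\partial_{jk}R^{n+1}_{ki}-\tfrac23\partial_{kl}R^{n+1}_{kl}\delta_{ij}\big)+c_{02}T_2^n,
\end{align*}
where $T_1^n=\frac{3Q(t^{n+1})-4Q(t^n)+Q(t^{n-1})}{2\delta t}-\partial_tQ(t^{n+1})$ is the BDF2 consistency error and $T_2^n=2Q(t^n)-Q(t^{n-1})-Q(t^{n+1})$ is the error of the second-order extrapolation of the $c_{02}$ term. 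A Taylor expansion with integral remainder shows both are $O(\delta t^2)$, with $\|T_1^n\|^2\leqslant C\delta t^3\int_{t^{n-1}}^{t^{n+1}}\|\partial_{ttt}Q\|^2\dd s$ and $\|T_2^n\|^2\leqslant C\delta t^3\int_{t^{n-1}}^{t^{n+1}}\|\partial_{tt}Q\|^2\dd s$; these two factors are exactly what will produce the $\partial_{ttt}Q$ and $\partial_{tt}Q$ terms in \eqref{errsecond}.

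Next I would take the inner product with $R^{n+1}$ and use \eqref{proj_invrt}. The heart of the second-order estimate is the G-stability identity
\begin{align*}
  (3R^{n+1}-4R^n+R^{n-1},R^{n+1})=\frac12(\Phi^{n+1}-\Phi^n)+\frac12\|R^{n+1}-2R^n+R^{n-1}\|^2,
\end{align*}
with the quadratic quantity $\Phi^n\triangleq\|R^n\|^2+\|2R^n-R^{n-1}\|^2$, which is precisely the combination appearing on the left of \eqref{errsecond}. The last square is nonnegative and may be discarded, leaving $\frac{1}{4\delta t}(\Phi^{n+1}-\Phi^n)$ on the left.

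On the right, the singular nonlinearity is controlled exactly as in the first-order case: by the monotonicity in Lemma~\ref{qent_ineq}, provided the exact solution stays in $\mathcal{Q}_{\text{phys}}$, the term $-\sP(\partial q/\partial Q^{n+1}-\partial q/\partial Q(t^{n+1}))\cdot R^{n+1}\leqslant0$ and is dropped; the elliptic terms integrate by parts (since $R^{n+1}|_{\partial\Omega}=0$) into $-\int_\Omega f_e(R^{n+1})\dd\bm{r}\leqslant0$ by \eqref{elasc} and are dropped. The remaining terms $c_{02}(2R^n-R^{n-1},R^{n+1})$, $(T_1^n,R^{n+1})$ and $c_{02}(T_2^n,R^{n+1})$ are bounded by Young's inequality, and after using $\|R^{n-1}\|^2\leqslant C\Phi^n$ (from $R^{n-1}=2R^n-(2R^n-R^{n-1})$) the right-hand side becomes $C(\Phi^{n+1}+\Phi^n)+C(\|T_1^n\|^2+\|T_2^n\|^2)$. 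This yields $\Phi^{n+1}-\Phi^n\leqslant C\delta t(\Phi^{n+1}+\Phi^n)+C\delta t(\|T_1^n\|^2+\|T_2^n\|^2)$; for $\delta t$ small enough that $1-C\delta t>0$, the discrete Gronwall inequality together with $\sum_k\delta t\|T_1^k\|^2\leqslant C\delta t^4\int_0^{t^n}\|\partial_{ttt}Q\|^2\dd s$ and the analogous bound for $T_2^k$ delivers \eqref{errsecond}.

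The main obstacle is the G-stability step: unlike backward Euler, BDF2 has no elementary $a^2-b^2$ energy identity, and one must recognize that $\Phi^n=\|R^n\|^2+\|2R^n-R^{n-1}\|^2$ is the correct Lyapunov quantity and that its increment produces the nonnegative square $\|R^{n+1}-2R^n+R^{n-1}\|^2$ with the right sign. A second, more structural point is that applying Lemma~\ref{qent_ineq} requires the exact solution $Q(\bm{r},t^{n+1})$ to lie in $\mathcal{Q}_{\text{phys}}$, which rests on the regularity theory for the singular gradient flow referenced in the paper. Finally, as a two-step method BDF2 needs an accurate start: the estimate presupposes $\Phi^1=O(\delta t^4)$, so the first step should be computed by a sufficiently accurate one-step method, which I would state as a hypothesis.
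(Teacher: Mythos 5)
Your proposal is correct and follows exactly the route the paper intends: it is the proof of Theorem \ref{errfirst_thm} adapted to BDF2, with the error equation, the $O(\delta t^2)$ truncation terms $T_1^n$ and $T_2^n$, the monotonicity of $\partial q$ and the sign of the elastic term handled identically, and the G-stability identity producing precisely the quantity $\Phi^n=\|R^n\|^2+\|2R^n-R^{n-1}\|^2$ appearing in \eqref{errsecond} (the paper omits the proof, saying only that it is similar to the first-order case). Your additional remark that the two-step method requires a starting value with $\Phi^1=O(\delta t^4)$ (e.g.\ one step of the first-order scheme suffices) is a legitimate hypothesis that the paper leaves implicit.
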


\subsection{Full discretization}
The properties of time discretizations can be inherited by full discretizations if we carefully keep the integration by parts. 
Let us consider a square region in 2D, $\Omega=[0,L]^2$. 
The strategy is to discretize the free energy in space, then take derivatives about the values on the discretized points to arrive at the scheme.

We divide $[0,L]^2$ into $N^2$ cells that are small squares of the same size. 
For $0\leqslant l, m\leqslant N$, the index $(l,m)$ represents the point $(lh,mh)$ where $h=L/N$. 
Since the bulk energy can be discretized trivially, we focus on the elastic energy. 
Note that the elastic energy is quadratic about first-order spatial derivatives. 
Thus, we shall start from discretizing the first-order derivatives on each cell. 
In the cell $\Gamma$ whose lower left index is $(l,m)$, the discretization of $\partial_i u, i = 1, 2$  for a scalar function $u$ is given by
\begin{equation}
    \label{eq:spatial_der} 
    \begin{aligned}
\partial_1 u  \approx D_1 u & = \frac{1}{h}\left(-\frac{1}{2}u_{l,m}-\frac{1}{2}u_{l,m+1}+\frac{1}{2}u_{l+1,m}+\frac{1}{2}u_{l+1,m+1}\right), \\
\partial_2 u \approx D_2 u & = \frac{1}{h}\left(-\frac{1}{2}u_{l,m}+\frac{1}{2}u_{l,m+1}-\frac{1}{2}u_{l+1,m}+\frac{1}{2}u_{l+1,m+1}\right).
    \end{aligned}
\end{equation}
When considering the gradient flow in 2D, we actually need to deal with two types of derivatives: $\partial_{11}$ ($\partial_{22}$ is done in the same way) and $\partial_{12}$. 
They actually originate from two different types of terms in the free energy, which we discuss below. 
\begin{itemize}
\item For the term of the form $\partial_1 u \partial_1 v$ in the free energy, its variational derivative about $u$ gives $-\partial_{11}v$. 
Based on \eqref{eq:spatial_der}, the approximation of $\partial_1 u \partial_1 v$ in the cell $\Gamma$ is given by 
\begin{equation}
    \label{eq:spatial_x1x1}
      \begin{aligned}
D_1 u D_1 v = \frac{1}{h^2}&\left(-\frac{1}{2}u_{l,m}-\frac{1}{2}u_{l,m+1}+\frac{1}{2}u_{l+1,m}+\frac{1}{2}u_{l+1,m+1}\right)\\
    &\left(-\frac{1}{2}v_{l,m}-\frac{1}{2}v_{l,m+1}+\frac{1}{2}v_{l+1,m}+\frac{1}{2}v_{l+1,m+1}\right).
  \end{aligned}
\end{equation}
The spatial discretization of $-\partial_{11}v$ at an interior node $(l,m)\;(1\leqslant l,m\leqslant N-1)$ is given by taking the derivatives about $u_{l,m}$ in the discretized free energy. 
Because the node $(l,m)$ is involved in four cells, we need to sum up the derivatives in these cells. 
If we denote the discretized second-order derivative as $D_{11}$, we deduce that at the node $(l,m)$, 
 \begin{equation}
     \label{eq:spatial_xx}
       \begin{aligned}
    -D_{11} v =   &\frac{1}{h^2}\Big[v_{l,m}+\frac{1}{2}v_{l,m+1}+\frac{1}{2}v_{l,m-1}\\
    &-\frac{1}{2}(v_{l+1,m}+\frac{1}{2}v_{l+1,m+1}+\frac{1}{2}v_{l+1,m-1})\\
    &-\frac{1}{2}(v_{l-1,m}+\frac{1}{2}v_{l-1,m+1}+\frac{1}{2}v_{l-1,m-1})\Big]. 
  \end{aligned}
 \end{equation}
By the Taylor expansion, the truncation error has the estimate 
\begin{equation}
    \label{eq:spatial_xx_trun1}
    |\partial_{11} v-D_{11}v| \leqslant \frac{h^2}{3}\max_{\Gamma}\max_{i_1,i_2,i_3,i_4=1,2}|\partial_{i_1i_2i_3i_4}v|. 
\end{equation}

If $u=v$, it is the term $\frac 12 \partial_1u\partial_1u$ that generates $-\partial_{11}u$, and the derivation above still holds. 

\item For the term $\partial_1u\partial_2v$, its variational derivative about $u$ is $-\partial_{12}v$. The discretization of $\partial_1 u \partial_2 v$ on $\Gamma$ reads 
\begin{equation}
    \label{eq:spatial_x1y2}
     \begin{aligned}
D_1 uD_2 v =  \frac{1}{h^2}&\left(-\frac{1}{2}u_{l,m}-\frac{1}{2}u_{l,m+1}+\frac{1}{2}u_{l+1,m}+\frac{1}{2}u_{l+1,m+1}\right)\\
    &\left(-\frac{1}{2}v_{l,m}+\frac{1}{2}v_{l,m+1}-\frac{1}{2}v_{l+1,m}+\frac{1}{2}v_{l+1,m+1}\right).
      \end{aligned}
\end{equation}
Similarly, we collect the derivatives about $u_{l,m}$ in the four cells and obtain the approximation of $-\partial_{12}v$, 
\begin{equation}
    \label{eq:spatial_xy}
    \begin{aligned}
    -D_{12}v=  -\frac{1}{4}v_{l+1,m+1}+\frac{1}{4}v_{l+1,m-1}+\frac{1}{4}v_{l-1,m+1}-\frac{1}{4}v_{l-1,m-1}.
    \end{aligned} 
\end{equation}
The truncation error can be estimated as 
\begin{equation}
    |\partial_{12}v-D_{12}v|\leqslant \frac{2h^2}{3}\max_{\Gamma}\max_{i_1,i_2,i_3,i_4=1,2}|\partial_{i_1i_2i_3i_4}v|. \label{eq:spatial_xx_trun2}
\end{equation}
\end{itemize}

For any term $a$ defined on nodes, we denote by $\sum_P a_P$ the summation over all interior nodes. 
For any term $b$ defined in cells, we denote by $\sum_\Gamma b_{\Gamma}$ the summation of $b$ over all cells. 
Let us write down the equations about summation by parts that we will use later. 
\begin{itemize}
\item For any $u$ and $v$, 
\begin{equation}
    \sum_{\Gamma}(D_1uD_2v-D_1vD_2u)_{\Gamma} \label{disbnd}
\end{equation}
only depends on the values of $u$ and $v$ on the boundary nodes. 
\item If $u$ takes zero on all boundary nodes ($l=0,N$ or $m=0,N$), then we have 
\begin{subequations}
\label{eq:partial_sum}
\begin{align}
   & -\sum_{P} (u D_{11}v)_{P}=\sum_{\Gamma} (D_1 uD_1 v)_{\Gamma}, 
    \label{eq:partial_sum1}  \\
 & -\sum_{P} (u D_{12}v)_{P}=\sum_{\Gamma} (D_2 uD_1 v)_{\Gamma} =\sum_{\Gamma} (D_1 uD_2 v)_{\Gamma},
\label{eq:partial_sum2}
\end{align}
\end{subequations}
\end{itemize}
The derivation of the above equations is straightforward. 
As an example, we show the first equality in \eqref{eq:partial_sum2} in Appendix \ref{app:partial_sum}.

We are now ready to define the discrete energy. 
For the bulk energy, in each cell it is defined from the average of $f_b$ at the four nodes multiplied by $h^2$. 
When summing up over all the cells, because of the Dirichlet boundary conditions, we arrive at 
\begin{align}
    \widehat{E}_b[Q]=h^2\sum_P f_b(Q)_P+A_1, 
\end{align}
where $A_1$ is a constant determined by the value of $Q$ on the boundary nodes. 
For the elastic energy, we define
\begin{align}
\label{eq:pos_E_e}
    \widehat{E}_e[Q]= \frac{h^2}{2}\sum_{\Gamma} c_{21}(D_sQ_{ij}D_sQ_{ij})_{\Gamma}+c_{22}(D_iQ_{ij}D_sQ_{sj})_{\Gamma}.
\end{align}
Using \eqref{disbnd}, we have 
\begin{align}
    \widehat{E}_e[Q]=\frac{h^2}{2}\sum_{\Gamma} c_{21}(D_sQ_{ij}D_sQ_{ij})_{\Gamma}+c_{22}(D_sQ_{ij}D_iQ_{sj})_{\Gamma}+A_2,
     \label{eq:pos_E_e2}
\end{align}
where $A_2$ is a constant determined by the value of $Q$ on the boundary nodes, and $A_2=0$ if $Q$ is zero on all boundary nodes. 
Using the same arguments below \eqref{elasc}, we deduce that the discrete elastic energy $\widehat{E}_e$ is bounded from below, uniformly about $h$. 

The discrete total energy is then defined as  
\begin{equation}
    \label{eq:dis_energy_def}
    \widehat{E}[Q] = \widehat{E}_b[Q]+\widehat{E}_e[Q].
\end{equation}

Define
\begin{equation}
    \label{eq:der_L}
    L_h[Q]_{ij} = -c_{21} D_{kk} Q_{ij} -  c_{22}D_{ik}Q_{kj}.
\end{equation}
The fully discretized schemes of the first and second order are 
\begin{subequations}
\label{eq:full_shceme}
    \begin{align}
      \frac{Q_{ij}^{n+1}-Q_{ij}^n}{\delta t}=& -\sP\left(\frac{\partial q}{\partial Q^{n+1}}\right)_{ij}+c_{02}Q_{ij}^n
     - \sP\left( L_h[Q^{n+1}]\right)_{ij}, 
     \label{eq:full_shcheme_fisrt}  \\
      \frac{3Q_{ij}^{n+1}-4Q_{ij}^n+Q_{ij}^{n-1}}{2\delta t}=&-\sP\left(\frac{\partial q}{\partial Q^{n+1}}\right)_{ij}+c_{02}(2Q_{ij}^n-Q_{ij}^{n-1})- \sP\left(L_h[Q^{n+1}] \right)_{ij}.
  \label{eq:full_shcme_second}
    \end{align}
\end{subequations}
These schemes satisfy the same properties as the corresponding time discretizations, and the proof is also similar. 
Below, we only discuss the first-order scheme, for which we focus on the difference from the time discretization.

\begin{theorem}
\label{thm:dis_energy}
For arbitrary $\delta t$ and $h$, the scheme \eqref{eq:full_shcheme_fisrt} has a unique solution on the domain $\mathcal{Q}_{\rm phys}$. Moreover, the discrete energy law holds, 
\begin{equation}
    \label{eq:dis_energy_dissp}
    \widehat{E}[Q^{n+1}] +h^2\left( \frac{1 + c_{02}\delta t}{2\delta t} \sum_{P}(Q^{n+1}_P - Q^n_P)^2 + \sum_{\Gamma} f_e(Q^{n+1} - Q^n)_{\Gamma}\right)
    \leqslant \widehat{E}[Q^{n}]. 
\end{equation}
\end{theorem}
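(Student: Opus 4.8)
The plan is to mirror the proof of Theorem \ref{tfirst_thm}, replacing the function-space arguments by their finite-dimensional discrete counterparts. First I would recast the scheme \eqref{eq:full_shcheme_fisrt} as the Euler--Lagrange equation of the discrete functional
\begin{align*}
  \widehat{F}[Q^{n+1}]=\frac{h^2}{2\delta t}\sum_P(Q^{n+1}_P-Q^n_P)^2+h^2\sum_P q(Q^{n+1})_P+\widehat{E}_e[Q^{n+1}]-c_{02}h^2\sum_P(Q^n\cdot Q^{n+1})_P,
\end{align*}
which is the natural discretization of \eqref{En_scheme}. Differentiating with respect to the nodal value $Q^{n+1}_P$ at an interior node and invoking the discrete summation-by-parts identities \eqref{eq:partial_sum} (to transfer the discrete second-order operator onto the test value, so that the variational derivative of $\widehat{E}_e$ becomes exactly $\sP(L_h[Q^{n+1}])$), then using that $Q^{n+1},Q^n\in\mathcal{Q}$ to drop the outer projection via \eqref{proj_invrt}, recovers \eqref{eq:full_shcheme_fisrt} after dividing by $h^2$.

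Next I would establish existence and uniqueness of the minimizer, which is where the discrete setting is genuinely simpler than the continuous one. Strict convexity of $\widehat{F}$ follows termwise exactly as in Theorem \ref{tfirst_thm}: the discrete quasi-entropy sum is strictly convex because $q$ is strictly convex at each node, $\widehat{E}_e$ is convex by \eqref{elasc} via \eqref{eq:pos_E_e2}, and the remaining terms are quadratic or linear. Because the unknowns are the finitely many interior nodal values living in the bounded set $\prod_P\mathcal{Q}_{\text{phys}}$, I can avoid the weak-compactness and Fatou arguments altogether: the barrier property (Proposition \ref{qeprop}, item 3) forces $\widehat{F}\to+\infty$ whenever any nodal value approaches $\partial\mathcal{Q}_{\text{phys}}$, so every nonempty sublevel set is a compact subset of the open set $\prod_P\mathcal{Q}_{\text{phys}}$. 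Hence the infimum is attained at an interior point, strict convexity makes it unique, and the minimizer automatically satisfies $Q^{n+1}_P\in\mathcal{Q}_{\text{phys}}$ at every interior node. Since the minimizer is interior, $q$ is smooth there and the subdifferential subtlety of the continuous proof disappears: any solution of \eqref{eq:full_shcheme_fisrt} constrained to $\mathcal{Q}_{\text{phys}}$ is a critical point of the smooth strictly convex $\widehat{F}$, hence the unique global minimizer, giving both existence and uniqueness of the scheme's solution.

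For the discrete energy law I would pair \eqref{eq:full_shcheme_fisrt} with $Q^{n+1}-Q^n$ in the discrete inner product $h^2\sum_P(\cdot)_P$, reproducing the three ingredients of \eqref{enfirst1} at the discrete level. The quasi-entropy term is controlled nodewise by Lemma \ref{qent_ineq}, since both $Q^{n+1}_P$ and $Q^n_P$ lie in $\mathcal{Q}_{\text{phys}}$; the $c_{02}$ term is handled by the elementary polarization identity for sums; and because $Q^{n+1}-Q^n$ vanishes on all boundary nodes, the discrete summation-by-parts formulas \eqref{eq:partial_sum} apply to the $\sP(L_h[Q^{n+1}])$ term, producing the telescoping combination $\widehat{E}_e[Q^{n+1}]-\widehat{E}_e[Q^n]+h^2\sum_\Gamma f_e(Q^{n+1}-Q^n)_\Gamma$ in exact analogy with the continuous computation. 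Collecting these yields \eqref{eq:dis_energy_dissp}.

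The step I expect to require the most care is the elastic-energy bookkeeping: verifying that the discrete quadratic form $\widehat{E}_e$ is the potential whose gradient is precisely $\sP(L_h)$ through \eqref{eq:partial_sum}, and that pairing $\sP(L_h[Q^{n+1}])$ against $Q^{n+1}-Q^n$ reconstructs the energy of the difference, $h^2\sum_\Gamma f_e(Q^{n+1}-Q^n)_\Gamma$, rather than merely a sign-definite quantity. This is the discrete surrogate for the integration-by-parts identities used in \eqref{enfirst1}, and the Dirichlet condition $Q^{n+1}|_{\partial\Omega}=Q^n|_{\partial\Omega}=Q_{\text{bnd}}$ is exactly what makes the boundary nodal contributions cancel so that \eqref{eq:partial_sum} is applicable.
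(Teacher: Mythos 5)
Your proposal is correct and follows essentially the same route as the paper: recasting the scheme as the Euler--Lagrange equation of the strictly convex discrete functional $\widehat{F}$ (identical to \eqref{eq:fun_f}), using the barrier property of $q$ from Proposition \ref{qeprop} to obtain an interior minimizer, and deriving \eqref{eq:dis_energy_dissp} by pairing with $Q^{n+1}-Q^n$ together with Lemma \ref{qent_ineq} and the summation-by-parts identities \eqref{eq:partial_sum}. The only difference is one of exposition: you spell out the finite-dimensional compactness of sublevel sets and the disappearance of the subdifferential subtlety, which the paper leaves implicit by simply asserting existence of the minimizer and referring back to Theorem \ref{tfirst_thm}.
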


\begin{proof}
The proof is similar to Theorem \ref{tfirst_thm}, for which we point out the differences. We still reformulate the scheme as the unique minimizer of a strictly convex function. 
Now the function reads 
\begin{equation}
    \label{eq:fun_f}
    \widehat{F}[Q^{n+1}] = h^2\left(\frac{1}{2 \delta t}\sum_{P}(Q^{n+1} - Q^n)_P^2 + \sum_{P}q(Q^{n+1})_{P} + \sum_{\Gamma} f_e(Q^{n+1})_{\Gamma} - c_{02} \sum_{P}(Q^{n} \cdot Q^{n+1})_P\right).
\end{equation}
By Proposition \ref{qeprop} and the discussion above on the elastic energy, we could verify that \eqref{eq:fun_f} is strictly convex on $\mathcal{Q}_{\rm phys}$ and bounded from below. 
Moreover, when $Q$ at any grid point tends to the boundary of $\mathcal{Q}_{\rm phys}$, Proposition \ref{qeprop} implies that $\widehat{F}\to+\infty$. 
Therefore, the function $\widehat{F}[Q^{n+1}]$ has unique minimizer satisfying 
\begin{equation}
    \sP\left(\frac{\partial \widehat{F}}{\partial (Q^{n+1})_P}\right)=0 \label{EL-dis}
\end{equation}
for any interior node $P$. 
Since our spatial discretization is constructed by taking derivatives about the discrete energy, we have 
\begin{equation}
    \frac{\partial}{\partial (Q^{n+1})_P}\sum_{\Gamma} f_e(Q^{n+1})_{\Gamma}=L_h[Q]_P. 
\end{equation}
Thus, we find that \eqref{EL-dis} is exactly \eqref{eq:full_shcheme_fisrt}.

For the energy dissipation, we take dot product of \eqref{eq:full_shcheme_fisrt} with $Q^{n+1} -  Q^n$ and take the sum $\sum_P$. Notice that by \eqref{eq:partial_sum},  the following summation by parts holds, 
\begin{align*}
    &- \sum_P \Big((Q^{n+1} - Q^n) \cdot\sP\left( L_h[Q^{n+1}]\right)\Big)_P\\
    &\quad=\sum_{\Gamma} c_{21}\Big(D_s(Q^{n+1}-Q^n)_{ij}D_sQ^{n+1}_{ij}\Big)_{\Gamma}+c_{22}\Big(D_i(Q^{n+1}-Q^n)_{ij}D_sQ^{n+1}_{sj}\Big)_{\Gamma}.
\end{align*}
Hence, the derivation in Theorem \ref{tfirst_thm} can be followed completely to arrive at \eqref{eq:dis_energy_dissp}. 
\end{proof}

Recall that $R^n(\bm{r})=Q^n(\bm{r})-Q(\bm{r},t^n)$. 
We define the error for the full discretization as 
\begin{equation}\label{error-full}
    \mathcal{E}\triangleq \sqrt{h^2\sum_{P} \left|R^n\right|^2_P}.
\end{equation}

\begin{theorem}
\label{thm:error_esti}
Assume that $Q(\bm{r},t)\in C^4(\Omega\times [0,T])$. The error of the scheme \eqref{eq:full_shcheme_fisrt} has the estimate
  \begin{align}
  \label{eq:dis_error_esti_def}
    \mathcal{E}^2=h^2\sum_{P} \left|R^n\right|^2_P  \leqslant C(\delta t^2+h^4). 
  \end{align}\
  The constant $C$ depends on $T$, the coefficients $c_{02}$, $c_{21}$, $c_{22}$, and the maximum derivatives (up to fourth order) of $Q$ in $\Omega\times [0,T]$. 
\end{theorem}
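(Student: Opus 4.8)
The plan is to follow the template of Theorem~\ref{errfirst_thm} for the time-discrete error estimate, but with the continuous $L^2$ inner product and integration by parts replaced by their discrete counterparts $h^2\sum_P$ and the summation-by-parts identities \eqref{eq:partial_sum}, and with one extra source of error to track: the spatial consistency error of the finite-difference operator $L_h$. First I would write the equation satisfied by $R^{n+1}=Q^{n+1}-Q(\cdot,t^{n+1})$ at the interior nodes. Inserting the exact solution into \eqref{eq:full_shcheme_fisrt} produces, besides the temporal truncation terms $T_1^n,T_2^n$ of \eqref{t-trunc}, a spatial truncation term $T_3^n$ measuring the difference between the continuous elliptic operator $L$ (the elastic part of \eqref{varE}) and its discretization $L_h$ applied to $Q(\cdot,t^{n+1})$. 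The error equation then reads, at each interior node,
\[
\frac{R^{n+1}-R^n}{\delta t}+T_1^n
=-\sP\Big(\frac{\partial q}{\partial Q^{n+1}}-\frac{\partial q}{\partial Q(\cdot,t^{n+1})}\Big)
+c_{02}R^n+c_{02}T_2^n-\sP\big(L_h[R^{n+1}]\big)+T_3^n .
\]

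Second, I would estimate the three truncation terms in the discrete norm $\|\cdot\|_h^2\triangleq h^2\sum_P|\cdot|_P^2$ using the assumed $C^4(\Omega\times[0,T])$ regularity. Taylor expansion in time, exactly as in \eqref{t-trunc}, gives $\|T_1^n\|_h^2\leqslant C\delta t^2$ and $\|T_2^n\|_h^2\leqslant C\delta t^2$. For the spatial part, the pointwise consistency bounds \eqref{eq:spatial_xx_trun1} and \eqref{eq:spatial_xx_trun2} give $|T_3^n|_P\leqslant Ch^2\max_{\Omega\times[0,T]}|\partial^4 Q|$, hence $\|T_3^n\|_h^2\leqslant Ch^4$. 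Here one uses that $Q(\cdot,t)$ stays in a compact subset of $\mathcal{Q}_{\text{phys}}$, so that the fourth derivatives appearing in these estimates are uniformly bounded and, in particular, $\partial q/\partial Q(\cdot,t^{n+1})$ is well defined and smooth.

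Third, I would take the discrete inner product of the error equation with $R^{n+1}$, i.e. multiply by $R^{n+1}$ and apply $h^2\sum_P$. The quasi-entropy difference term is eliminated by the monotonicity inequality of Lemma~\ref{qent_ineq}, applied nodewise: both $Q^{n+1}_P\in\mathcal{Q}_{\text{phys}}$ (by Theorem~\ref{thm:dis_energy}) and $Q(\cdot,t^{n+1})_P\in\mathcal{Q}_{\text{phys}}$, so its contribution is nonpositive and is discarded. Since $R^{n+1}$ vanishes on all boundary nodes (both $Q^{n+1}$ and the exact solution satisfy the same Dirichlet condition), the summation-by-parts identities \eqref{eq:partial_sum} convert the $L_h[R^{n+1}]$ term into the discrete elastic energy of $R^{n+1}$, which is nonnegative by the argument below \eqref{elasc} and is likewise discarded. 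The remaining terms $c_{02}R^n$, $c_{02}T_2^n$, $T_1^n$, $T_3^n$ are controlled by Cauchy--Schwarz and Young's inequality, yielding
\[
\|R^{n+1}\|_h^2-\|R^n\|_h^2\leqslant C\delta t\big(\|R^{n+1}\|_h^2+\|R^n\|_h^2\big)+C\delta t\,(\delta t^2+h^4).
\]
Summing over the time levels with $R^0=0$ and applying the discrete Gronwall inequality then gives $\mE^2=\|R^n\|_h^2\leqslant C(\delta t^2+h^4)$, which is \eqref{eq:dis_error_esti_def}.

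I expect the main obstacle to be the bookkeeping that makes the discrete energy estimate parallel the continuous one: one must verify that the discrete summation-by-parts \eqref{eq:partial_sum} reproduces exactly the cancellation of the elastic operator that integration by parts provided in Theorem~\ref{errfirst_thm}, and that the projection $\sP$ is compatible with this procedure so that the monotone quasi-entropy term and the nonnegative discrete elastic energy both carry the correct sign. The spatial truncation term $T_3^n$ is the genuinely new ingredient relative to Theorem~\ref{errfirst_thm}; everything hinges on the $O(h^2)$ pointwise consistency \eqref{eq:spatial_xx_trun1}--\eqref{eq:spatial_xx_trun2} surviving the discrete $L^2$ summation to produce the claimed $O(h^4)$, while the temporal $O(\delta t^2)$ contribution is inherited essentially verbatim from the time-discrete analysis.
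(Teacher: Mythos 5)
Your proposal is correct and follows essentially the same route as the paper's proof: write the error equation at interior nodes, bound the temporal and spatial truncation errors (which the paper lumps into a single term $T_h^n$ with $|T_h^n|\leqslant C(\delta t+h^2)$ rather than tracking $T_1^n,T_2^n,T_3^n$ separately), discard the quasi-entropy difference by the monotonicity of Lemma \ref{qent_ineq} and the $L_h$ term by summation by parts \eqref{eq:partial_sum} giving the nonnegative discrete elastic energy, then conclude with Cauchy--Schwarz and Gronwall as in Theorem \ref{errfirst_thm}. The additional points you flag explicitly --- that the exact solution stays in a compact subset of $\mathcal{Q}_{\text{phys}}$ so $\partial q/\partial Q(\cdot,t^{n+1})$ is well defined, and that $Q^{n+1}_P\in\mathcal{Q}_{\text{phys}}$ by Theorem \ref{thm:dis_energy} --- are left implicit in the paper but are exactly the right hypotheses to invoke.
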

\begin{proof}
We deduce the equation for $R^n$ as 
\begin{equation}
\label{eq:error_esti}
    \frac{R^{n+1}_{ij} - R^{n}_{ij}}{\delta t} 
    = -\sP\left(\dfrac{\partial q}{\partial Q^{n+1} } - \dfrac{\partial q}{\partial Q(t^{n+1}) } \right)_{ij} + c_{02}R^n_{ij} -\sP\left(L_h[R^{n+1}]\right)_{ij} +T_h^n,
\end{equation}
where the truncation error can be estimated from \eqref{t-trunc}, \eqref{eq:spatial_xx_trun1} and \eqref{eq:spatial_xx_trun2} as 
\begin{equation*}
    |T_h^n|\leqslant C(\delta t+h^2), 
\end{equation*}
where the constant $C$ depends on the maximum derivatives on $\Omega\times [0,T]$. 
Taking dot product of \eqref{eq:error_esti} with $R^{n+1}$ and summing up, 
by \eqref{eq:partial_sum} we deduce that 
\begin{equation*}
    \begin{aligned}
    \sum_{P}(R^{n+1} \cdot \sP L_h[R^{n+1}])_P = \widehat{E}_e[R^{n+1}] \geqslant 0.
    \end{aligned}
\end{equation*}
Follow the same derivation of Theorem \ref{errfirst_thm} to arrive at \eqref{eq:error_esti}. 
\end{proof}

\textit{Remark.} It is noted that the analyses above could be carried out for the Bingham term as well. However, the resulting schemes are not practical at all, since it needs to calculate the derivative of the Bingham term about $Q$, which requires calculating integrals that bring greater difficulties. See Appendix  \ref{app:bingham} for more details.

\section{Numerical examples}
\label{sec:examples}

Let us consider the case where the solution is homogeneous in the $z$-direction, so that we reduce the problem to 2D. 
In our numerical examples, the gradient flow is solved in the square $[0,1]\times [0,1]$. 

We choose $c_{02}>\chi^{**}=13.5$. By Proposition \ref{uni_stationary}, the minimum of the bulk energy $f_b$ is given by $Q_0=s_2(\bm{n}\otimes\bm{n}-I/3)$, where $s_2$ is a function of $c_{02}$ but $\bm{n}$ could take arbitrary unit vector. 
We set the boundary values as the bulk energy minimum, 
\begin{equation}
\label{eq:ini}
    Q=s_2(\bm{n}(\bx)\otimes\bm{n}(\bx)-I/3), 
\end{equation}
where $\bm{n}(\bx)$ depends on the location. 

The numerical schemes are solved using the Newton's iteration. 
To ensure that the iteration lies within $\mathcal{Q}_{\text{phys}}$, we adopt a simple damping strategy: if a Newton's step drives the iteration out of $\mathcal{Q}_{\text{phys}}$, we halve the step length along the Newton's direction, possibly for several times until the iteration falls within $\mathcal{Q}_{\text{phys}}$. 

To illustrate the numerical results, we investigate the principal eigenvector of $Q$. 
Moreover, we define the biaxiality that reflects the features of the eigenvalues (see \cite{Mkaddem2000}). 
If $Q$ is nonzero, the biaxiality is defined as 
\begin{align}
\label{eq:biaaxes}
    1-6\frac{(\mathrm{tr}(Q^3))^2}{(\mathrm{tr}(Q^2))^3}. 
\end{align}
The biaxiality ranges in $[0,1]$. 
It takes zero if and only if $Q$ has two identical eigenvalues, 
and takes one if and only if $Q$ has three eigenvalues $\lambda$, $0$ and $-\lambda$.

\subsection{Accuracy test}
\label{sec:acc}

We first validate the accuracy of the schemes. 
We choose the coefficients 
\begin{equation}
    \label{eq:ex1_ini}
    c_{02} = 20, \qquad c_{21} = 6, \qquad c_{22} = 2.
\end{equation}
The initial condition is chosen as a variation from a steady state, 
\begin{equation}
    \label{ex1_ini}
    Q = Q_0 + \epsilon Q_1(x, y), \qquad (x, y) \in [0, 1] \times [0, 1],
\end{equation}
where the constant tensor $Q_0$ takes the form \eqref{eq:ini} with $\bm{n} = (1, 0, 0)^t$, $\epsilon = 0.05$, and the variation $Q_1(x, y)$ is given by 
\begin{equation}
    \label{eq:ex1_Q1}
    Q_1 = \sin(2\pi x) \sin(2 \pi y). 
\end{equation}
It is noticed that $Q_1$ is zero on the boundary, so that the boundary conditions are set as the constant vector $Q_0$. 
The reference solution at $t=0.01$ is calculated using the second-order scheme with $N=64$ and $\delta t=3.125\times 10^{-5}$. 

Let us begin with the time accuracy. 
We fix $N=64$ 
and choose the time step as $\delta t = 2\times 10^{-3},\, 10^{-3},\, 5\times 10^{-4},\, 2.5\times 10^{-4}$. 
The numerical error $\mathcal{E}$ for the first-order and second-order schemes are plotted in Figure \ref{fig:ex1_first_dt_l2} and \ref{fig:ex1_second_dt_l2}, respectively. We can see clearly the first-order and second-order convergence.

\begin{figure}[!htb]
  \centering
   \subfloat[first-order scheme]{\includegraphics[width=0.45\textwidth,clip]{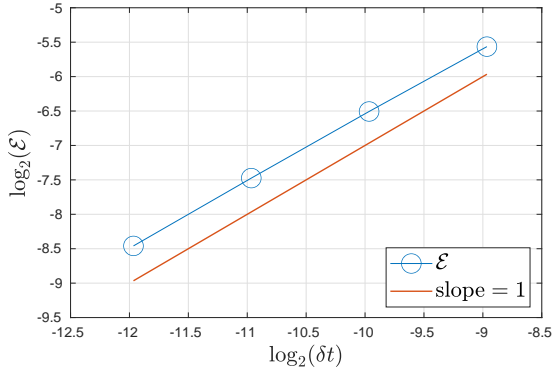}
   \label{fig:ex1_first_dt_l2}
   }\hfill
  \subfloat[second-order scheme]{\includegraphics[width=0.45\textwidth,clip]{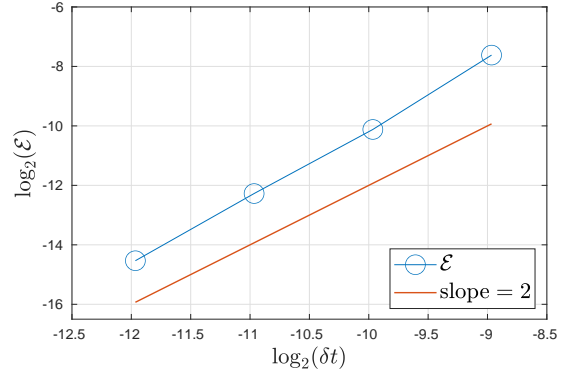}
  \label{fig:ex1_second_dt_l2}
  }
  \caption{ The error $\mathcal{E}$ of numerical solutions to \eqref{eq:full_shcheme_fisrt}, \eqref{eq:full_shcme_second}, with different $\delta t$.  } \label{fig:ex1_dt_l2}
\end{figure}

We turn to the spatial accuracy. Since the time accuracy has been validated, we could match the order in time and the order in space by imposing certain relations between the time step and the grid size. In \eqref{eq:full_shcheme_fisrt}, we consider 
\begin{equation}
    \label{eq:ex1_first_dt}
    \delta t = 0.004 h^2, 
\end{equation}
while in \eqref{eq:full_shcme_second} we consider 
\begin{equation}
    \label{eq:ex1_second_dt}
    \delta t = 0.004 h. 
\end{equation}
Then, we let the number of grid points vary as $N = 2, 4, 8, 16$, and $32$. 
The error $\mathcal{E}$ defined in \eqref{error-full} between the numerical solution and the same reference solution is plotted in Figure \ref{fig:ex1_dx_l2}. 
The second-order convergence is observed, which is consistent with theoretical results.

\begin{figure}[!htb]
  \centering
   \subfloat[first-order scheme]{\includegraphics[width=0.45\textwidth]{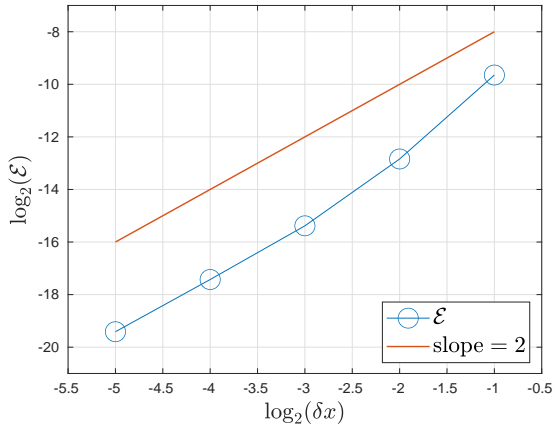}
   \label{fig:ex1_first_dx_l2}
   }\hfill
  \subfloat[second-order scheme]{\includegraphics[width=0.45\textwidth]{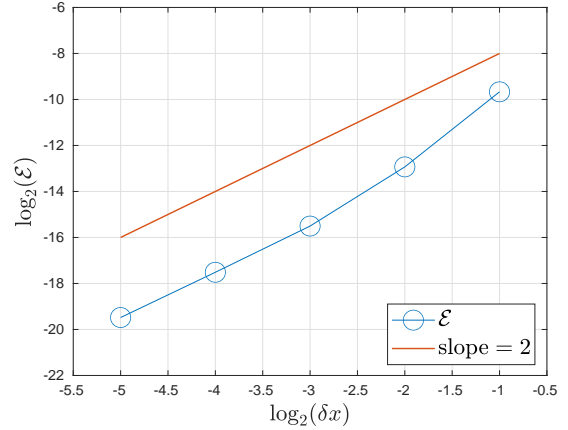}
  \label{fig:ex1_second_dx_l2}
  }
  \caption{The error $\mathcal{E}$ of the numerical solutions to \eqref{eq:full_shcheme_fisrt} and \eqref{eq:full_shcme_second}, with the time steps and grid sizes related by \eqref{eq:ex1_first_dt} and \eqref{eq:ex1_second_dt}, respectively.} 
  \label{fig:ex1_dx_l2}
\end{figure}

\subsection{Solutions with large $c_{02}$}\label{Large-c02}
When $c_{02}$ is large, the scalar $s_2$ minimizing the bulk energy becomes close to one, so that the eigenvalues  are close to $-1/3$ and $2/3$. This is also the case for the solution to the gradient flow, which leads to difficulties numerically. 
Thus, let us test our numerical schemes when the solution is close to the boundary of $\mathcal{Q}_{\text{phys}}$. 

We choose $c_{02}=100$ while keeping $c_{21}=6$ and $c_{22}=2$. 
The initial value is still set according to \eqref{ex1_ini} with $Q_1$ given by \eqref{eq:ex1_Q1} and $\epsilon = 0.001$. 
The boundary condition is set by \eqref{eq:ini} with 
\begin{equation}
    \label{eq:ex2_bou}
    \bm{n}(\bx) = \left\{
    \begin{array}{ll}
     (1, 0, 0)^t,    & x = 0, ~{\rm or}~ x = 1, \\
     
     (0, 1, 0)^t,    & y = 0, ~{\rm or}~ y = 1.
    \end{array}
    \right.
\end{equation}
We use the second-order scheme with $N = 24$ and the time step $\delta t = 0.005$, to evolve the gradient flow till $t = 0.5$ when the system has reached the steady state. 
The maximum and minimum eigenvalues of the matrix $Q$ all over the region, and their distance from $2/3$ or $-1/3$, with respect to the time, are presented in Figure \ref{fig:ex2_eigen_value} and  \ref{fig:ex2_eigen_error}, respectively. 
The eigenvalues are indeed close to the boundary, but still lie within $(-1/3,2/3)$. 

\begin{figure}[!htb]
  \centering
   \subfloat[Maximum and minimum eigenvalues]{\includegraphics[width=0.45\textwidth,height=0.3\textwidth]{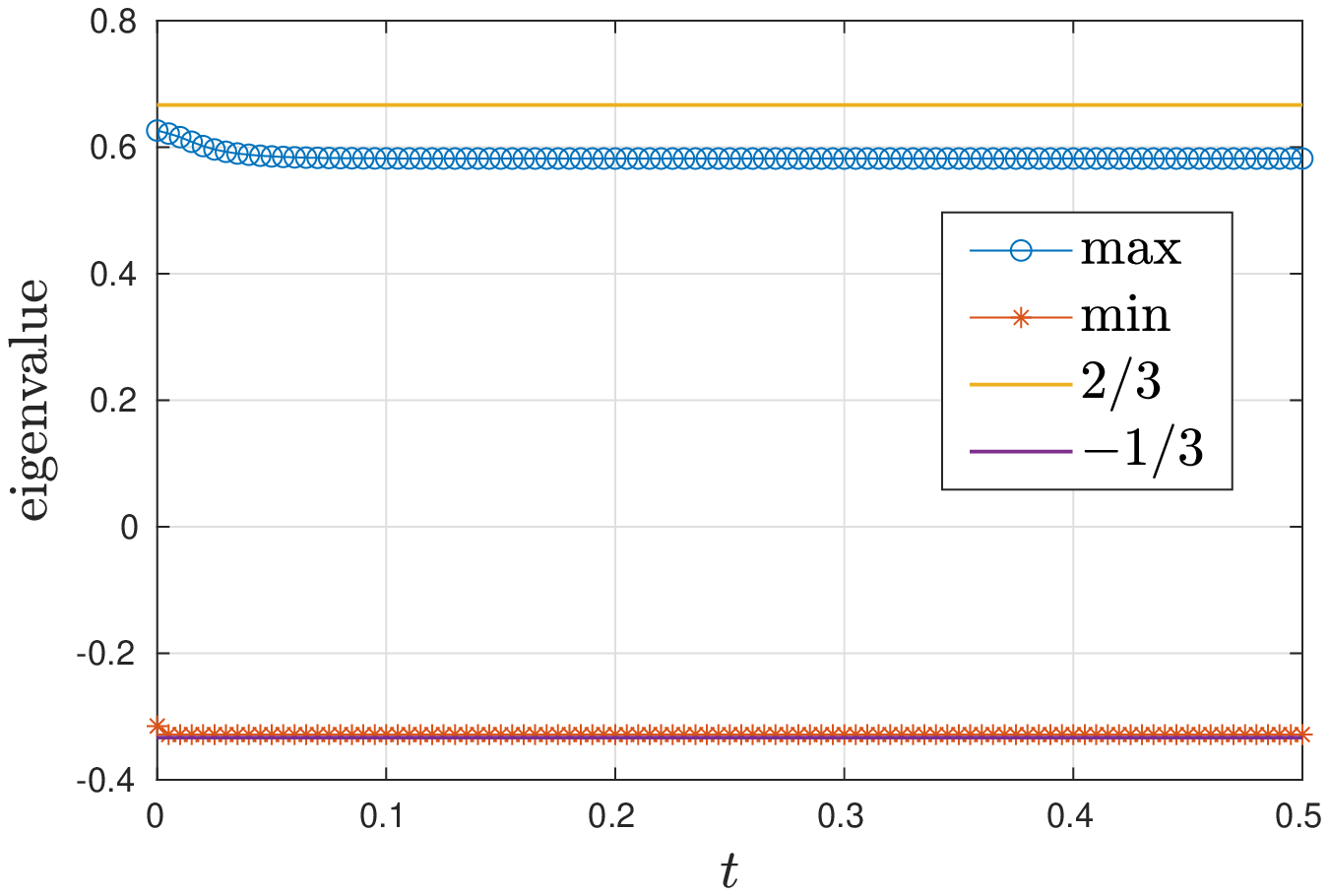}
   \label{fig:ex2_eigen_value}
   }\hfill
  \subfloat[Distance to $2/3$ for the maximum eigenvalue, and to $-1/3$ for the minimum eigenvalue. ]
  {\includegraphics[width=0.45\textwidth,height=0.3\textwidth]{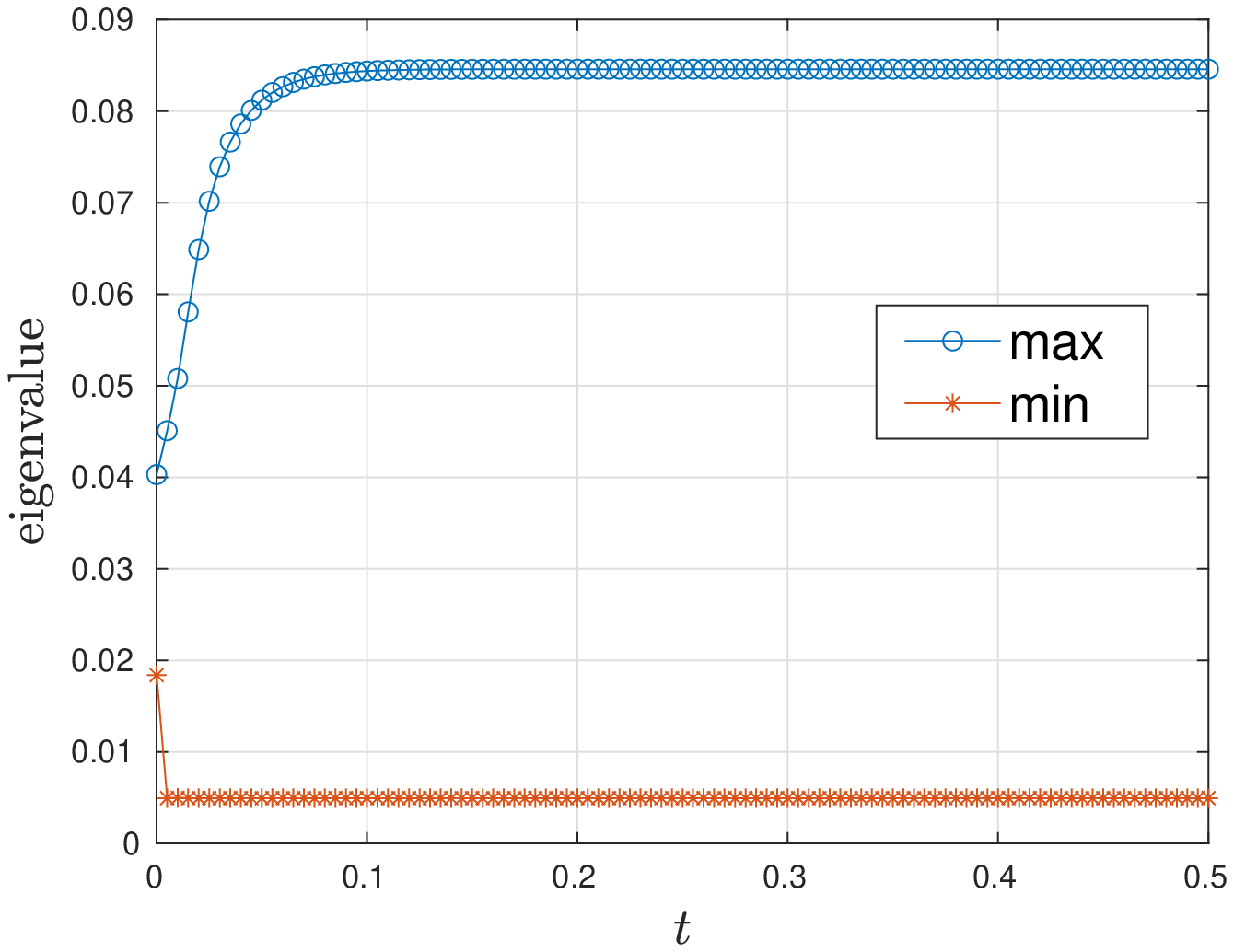}
  \label{fig:ex2_eigen_error}
  }
  \caption{(Section \ref{Large-c02}) The maximum and minimum eigenvalues of the tensor $Q$ all over the region and their minimum distance to $2/3$ or $-1/3$ about the time. } 
  \label{fig:ex2_eig}
\end{figure}

The steady state pattern is shown in Figure \ref{fig:ex2_eig_bia}. 
From the principal eigenvector, we find that the whole region is divided into four subregions by two diagonals. 
Within each of the four subregions, the principal eigenvector is identical to that on the boundary, thus either horizontal or vertical. 
Biaxiality emerges in the middle of each subregion. 
This pattern is called the well order-reconstruction solution in \cite{yin2020construction}.

\begin{figure}[!htb]
  \centering
  \subfloat[Principal eigenvector]{\includegraphics[width=0.45\textwidth, height=0.35\textwidth]{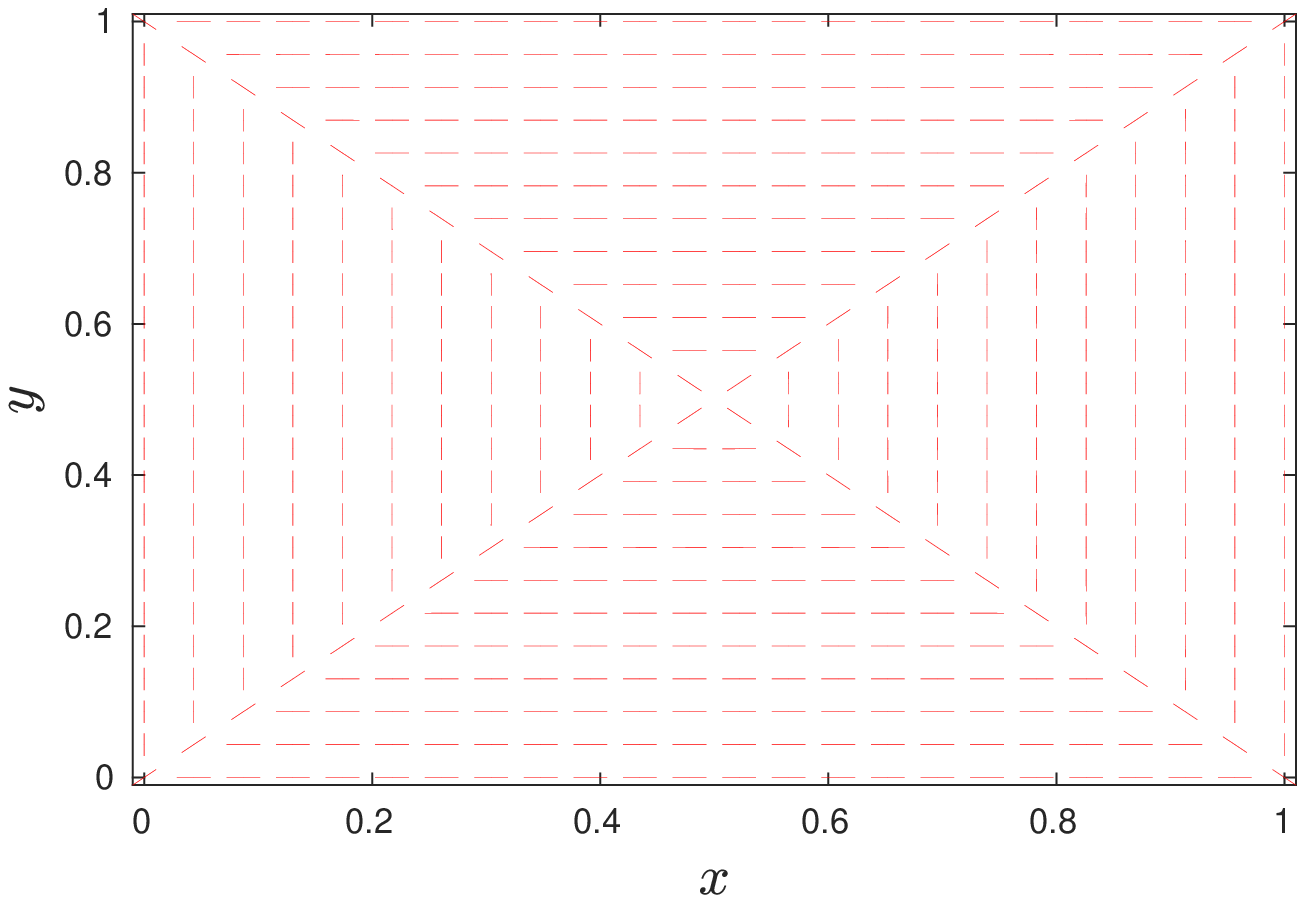}
    \label{fig:ex2_vec}}
\hfill
  \subfloat[Biaxiality]{\includegraphics[width=0.45\textwidth, height=0.35\textwidth]{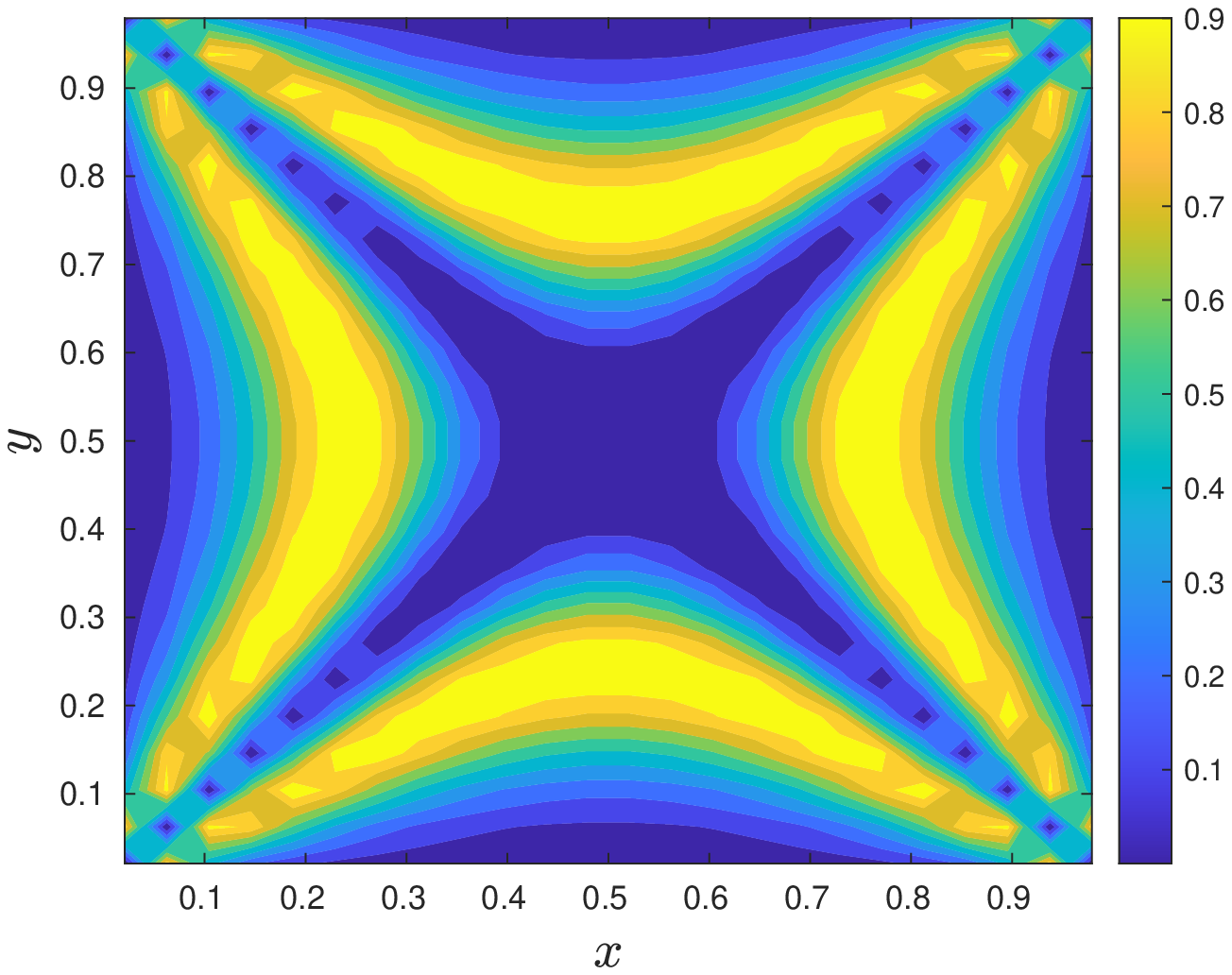}
   \label{fig:ex2_bia}}
  \caption{(Section \ref{Large-c02}) The steady state pattern. 
  } 
  \label{fig:ex2_eig_bia}
\end{figure}

Let us look into other features of our scheme using this example. 
For the energy dissipation, we plot the energy curve versus the time in Figure \ref{fig:ex2_tot_energy}, where we can see that the total energy is decreasing to a steady state. 
For the efficiency of the scheme, let us investigate the number of Newton's iteration, which is given in Figure \ref{fig:ex2_iter}. 
The maximum number of Newton's iteration is six, while for most time steps the number is no greater than four, indicating that our scheme can be implemented very efficiently.

\begin{figure}[!htb]
  \centering
   \subfloat[Total energy]{\includegraphics[width=0.45\textwidth,height=0.3\textwidth]{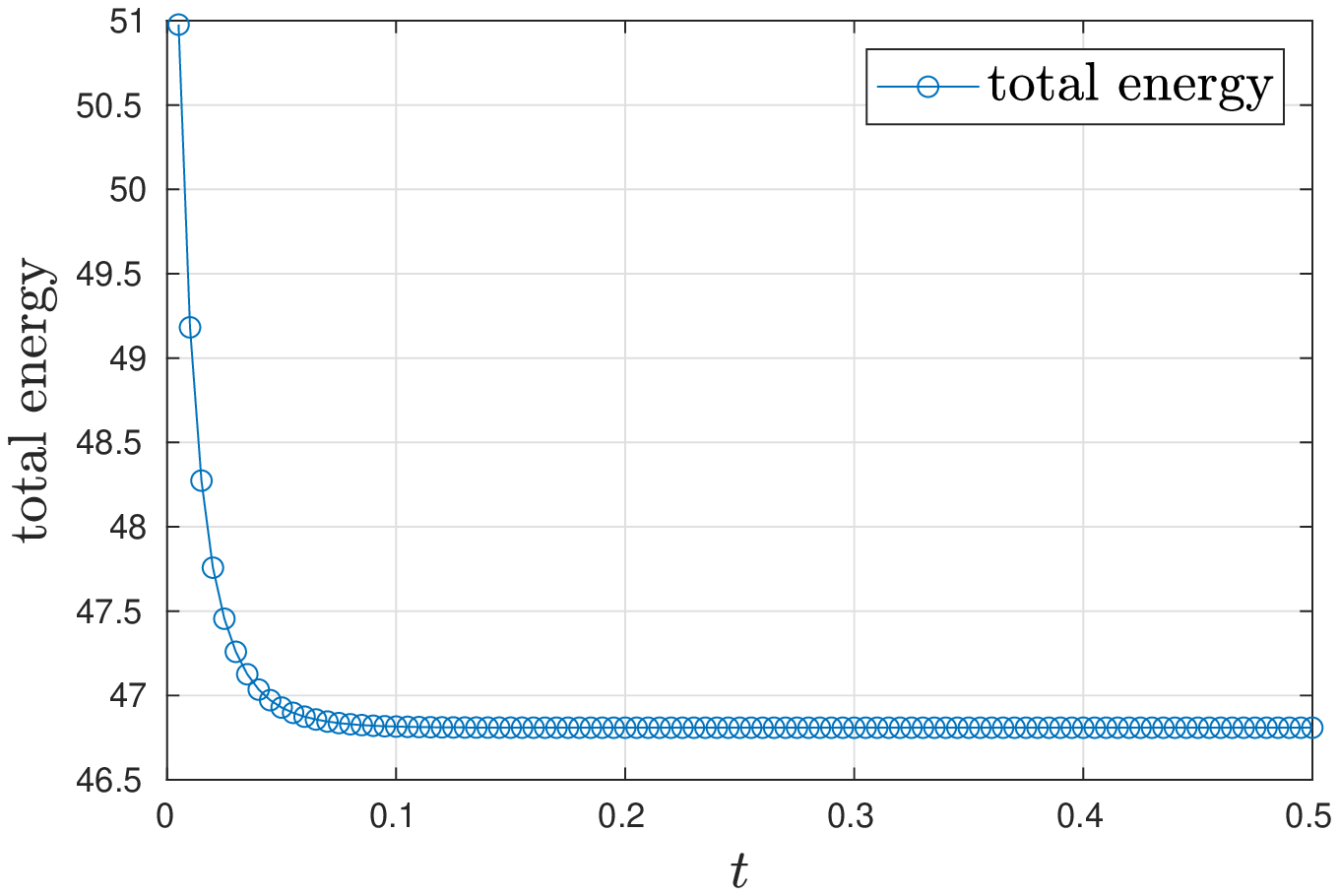}
   \label{fig:ex2_tot_energy}
   }\hfill
  \subfloat[Iteration number]{\includegraphics[width=0.45\textwidth,height=0.3\textwidth]{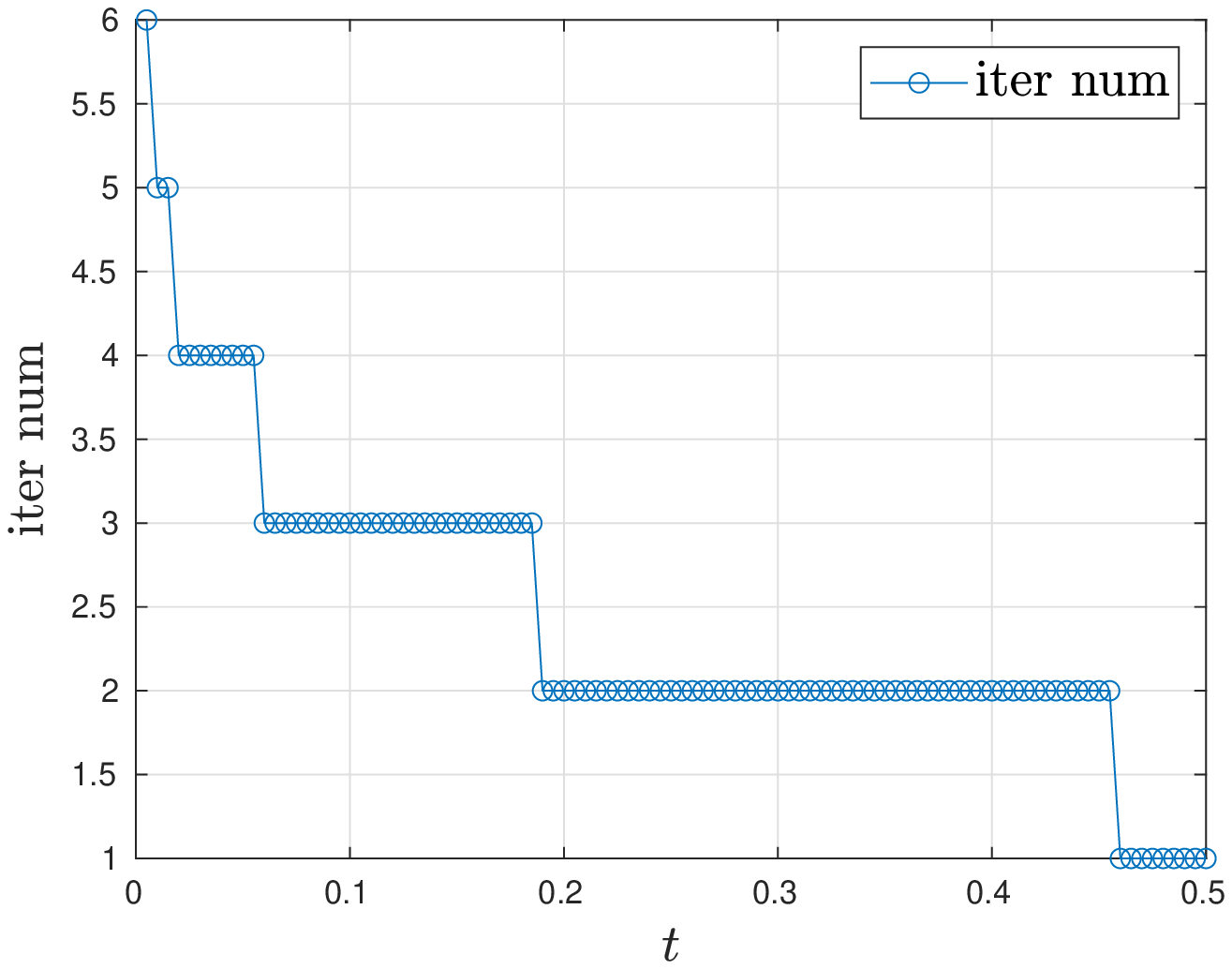}
  \label{fig:ex2_iter}
  }
  \caption{(Section \ref{Large-c02}) Total energy and the number of Newton's iterations versus the time. } 
  \label{fig:ex2_sol}
\end{figure}

\subsection{Effect of elastic coefficients}\label{Elasc}
We fix $c_{02} = 20$, $c_{21}=0.04$ and change the value of $c_{22}$. 
The boundary conditions are identical to those in Section \ref{Large-c02}. 
The initial condition is now chosen in the form \eqref{eq:ini} where $\bm{n}(\bm{x})$ takes a constant vector $(1/\sqrt{2},1/\sqrt{2},0)^t$. 
For the discretization, we still use the second-order scheme with $N = 24$ and $\delta t = 0.005$. 

We let the system evolve to a time long enough to reach a steady state. For six different values of $c_{22}$, we draw the principal eigenvectors in Figure \ref{fig:ex3_c21_vec}, and biaxiality in Figure \ref{fig:ex3_c21_bia}. 
It is observed from the principal eigenvectors that the pattern turns out to be the diagonal state \cite{yin2020construction}. 
As the value of $c_{22}$ increases, the transition of the principal eigenvector from a boundary horizontal/vertical direction to the diagonal direction in the middle becomes smoother, and the region with evident biaxiality also occupies a larger area.

\begin{figure}[!htb]
  \centering
\subfloat[$c_{22} = -0.039$]{\includegraphics[width=0.3\textwidth,height=0.22\textwidth]{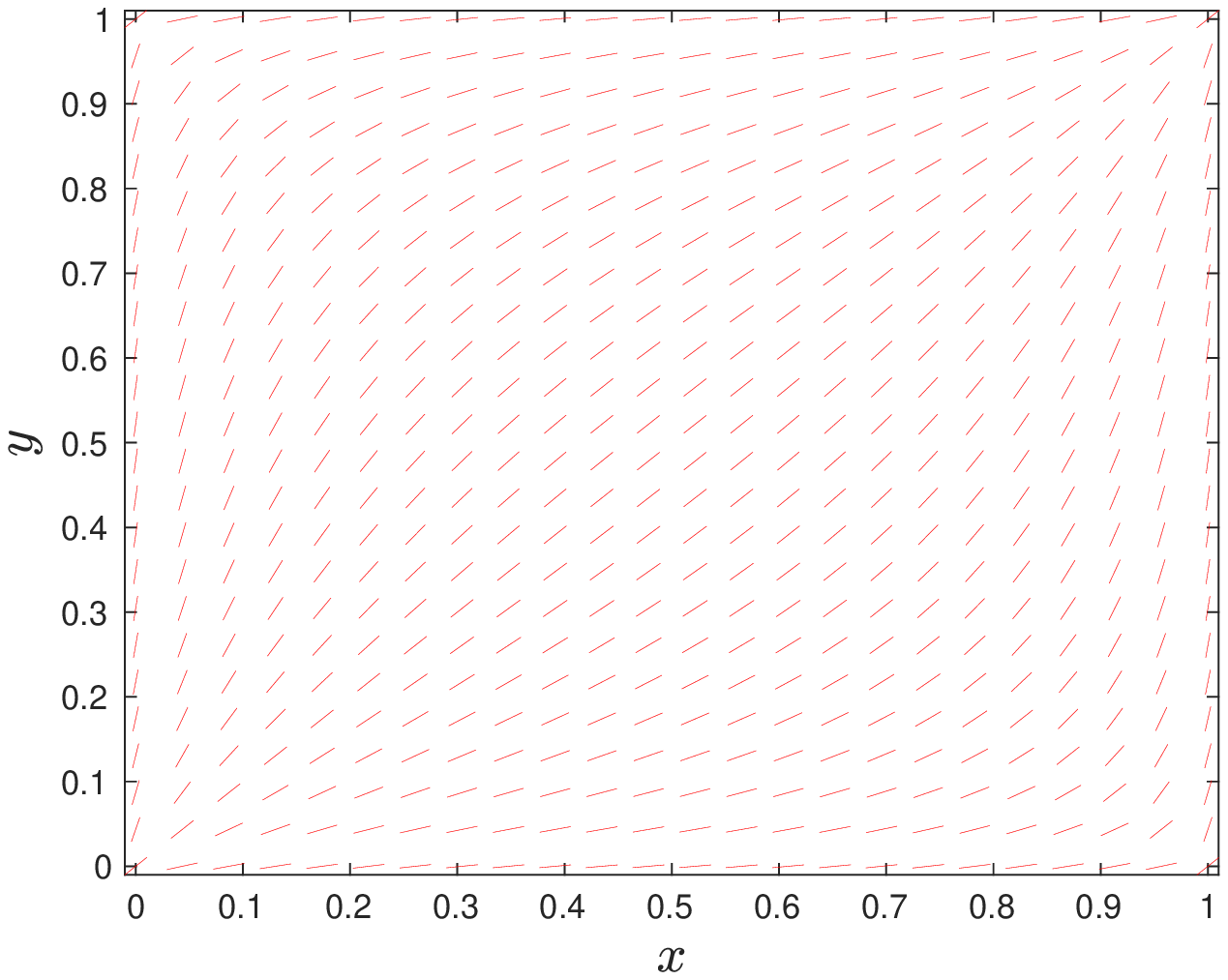}
   \label{fig:ex3_c21_vec_1}}\hfill
\subfloat[$c_{22} = -0.02$]{\includegraphics[width=0.3\textwidth,height=0.22\textwidth]{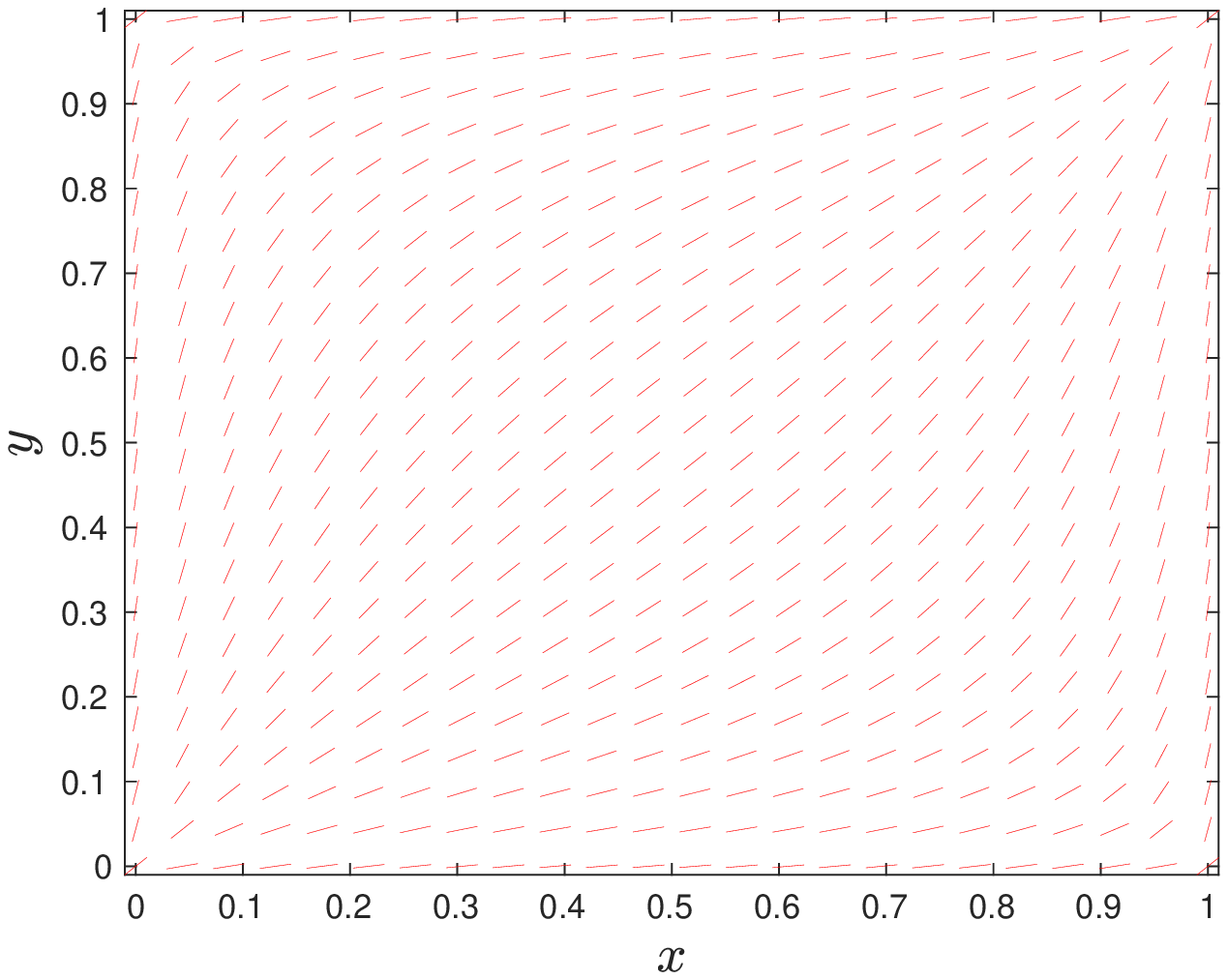}
   \label{fig:ex3_c21_vec_2}}\hfill
   \subfloat[$c_{22} = 0.0$]{\includegraphics[width=0.3\textwidth,height=0.22\textwidth]{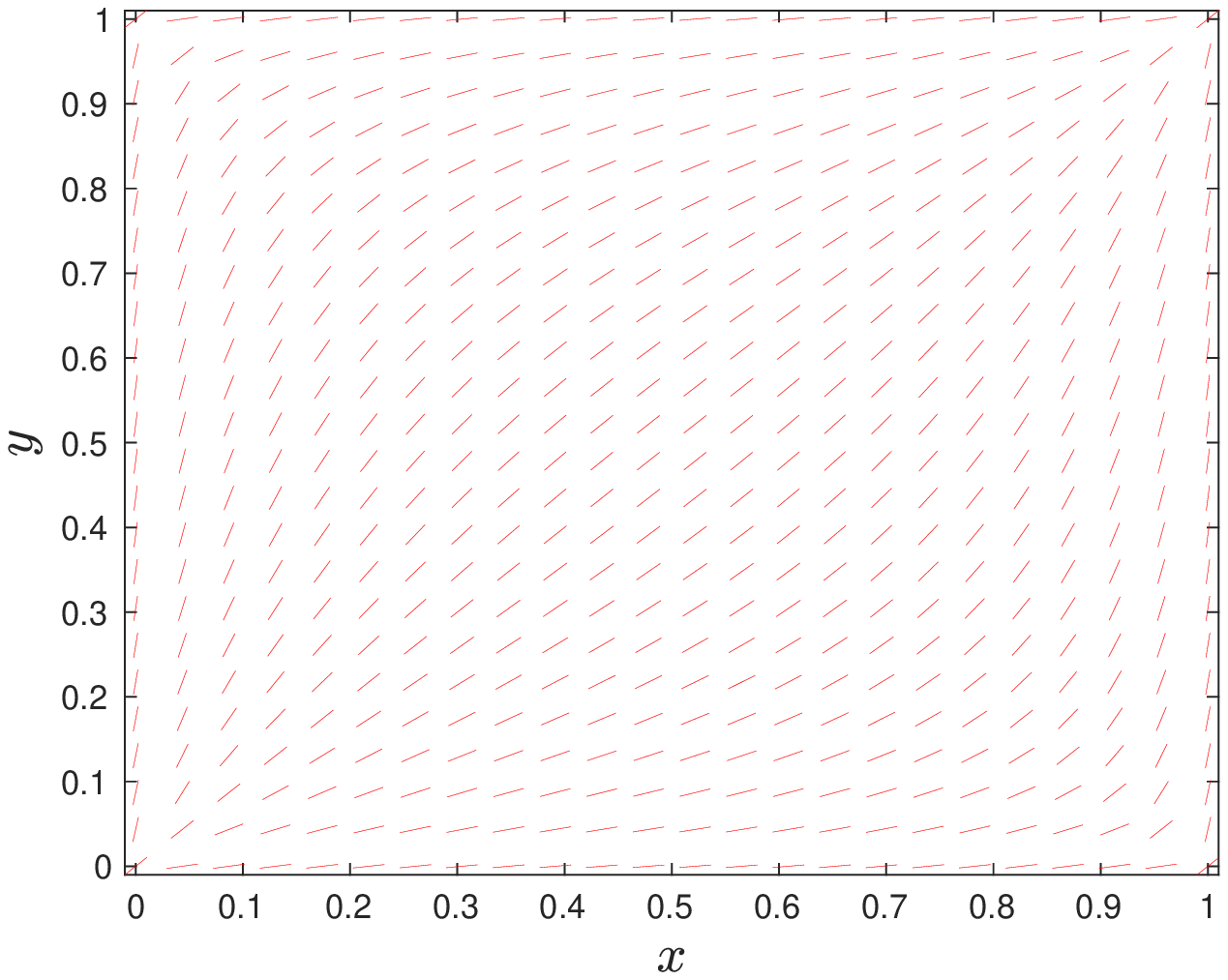}
   \label{fig:ex3_c21_vec_3}}\\
   \subfloat[$c_{22} = 0.04$]{\includegraphics[width=0.3\textwidth, height=0.22\textwidth]{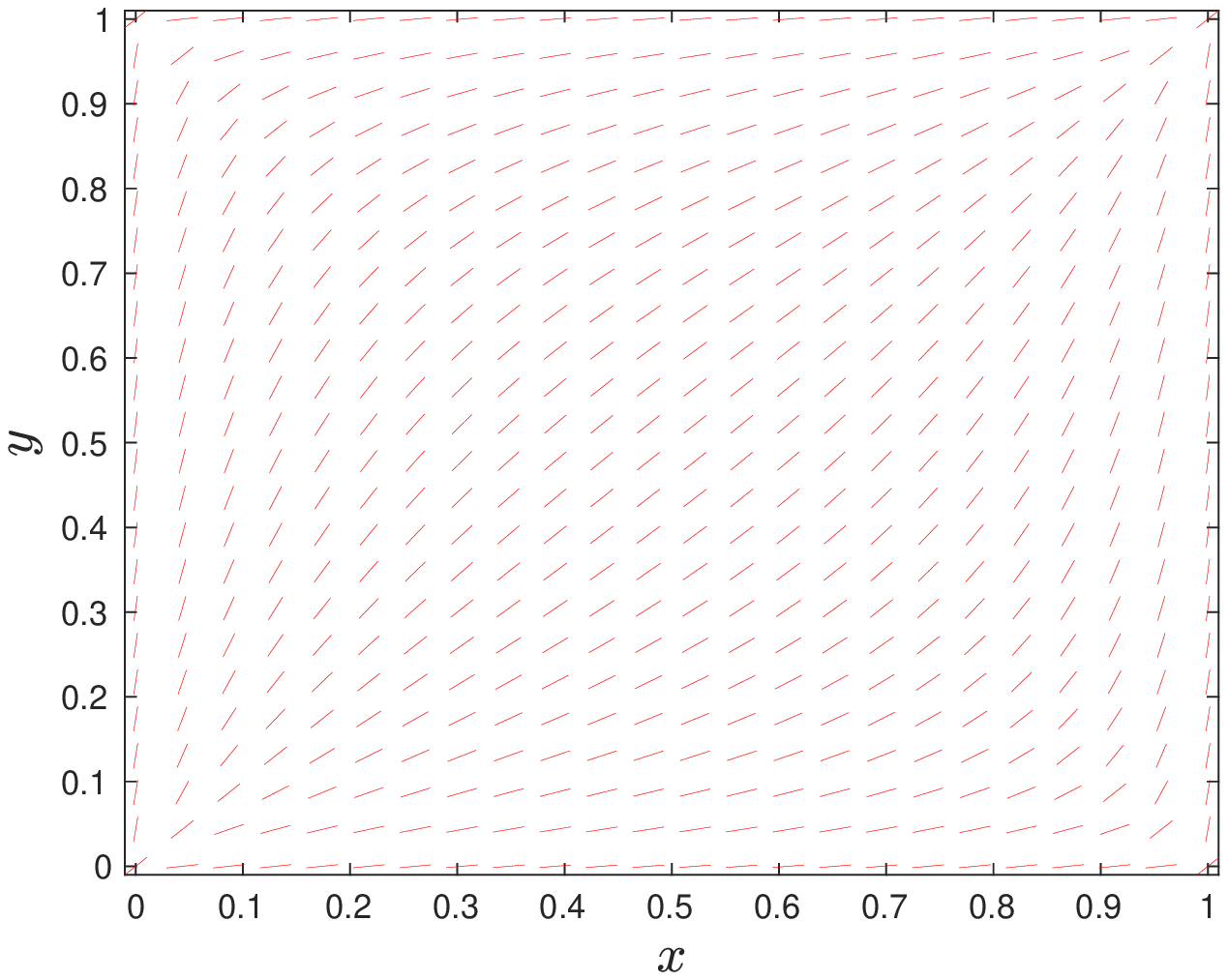}
   \label{fig:ex3_c21_vec_4}}\hfill
   \subfloat[$c_{22} = 0.16$]{\includegraphics[width=0.3\textwidth,height=0.22\textwidth]{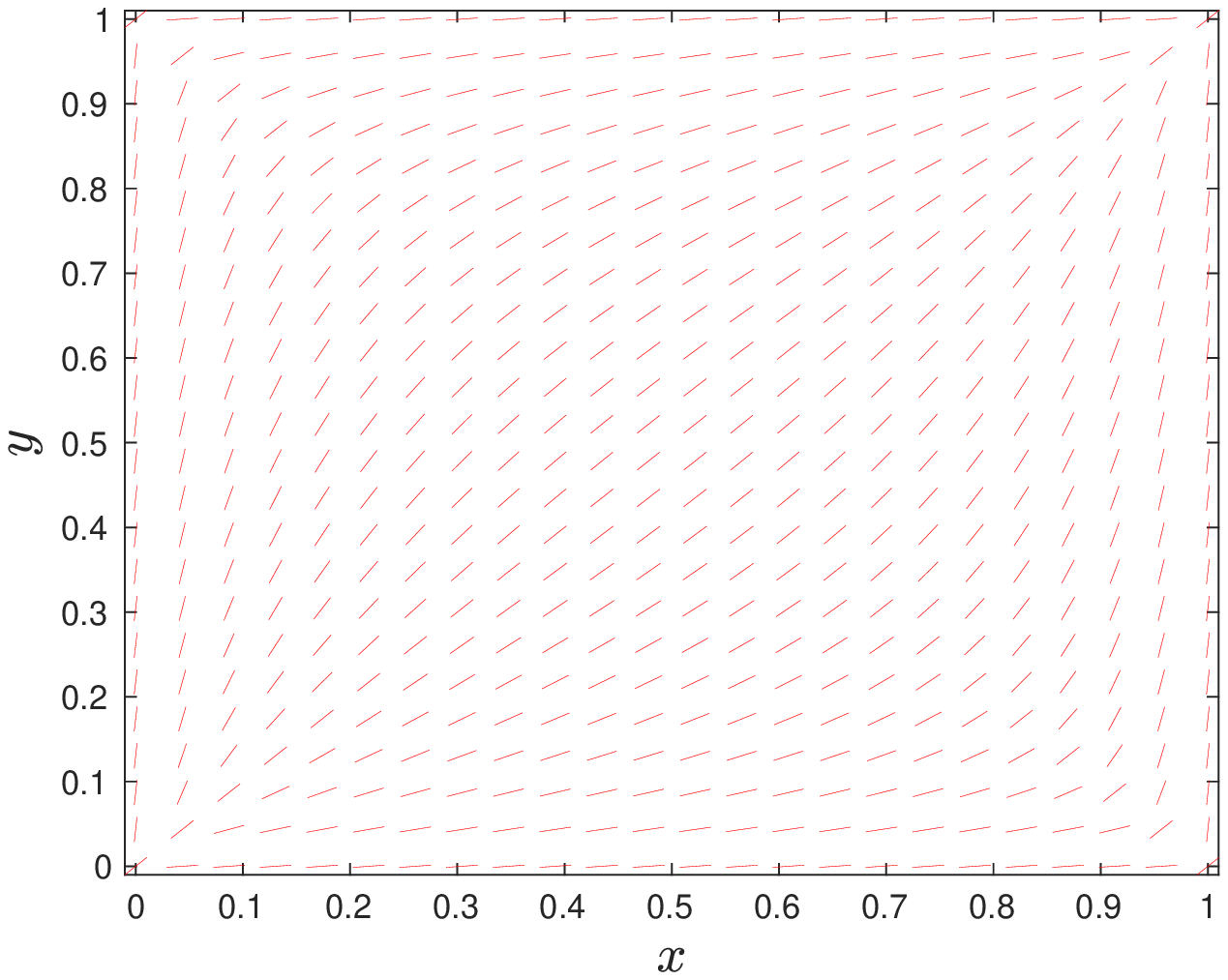}
   \label{fig:ex3_c21_vec_5}}\hfill
   \subfloat[$c_{22} = 0.32$]{\includegraphics[width=0.3\textwidth,height=0.22\textwidth]{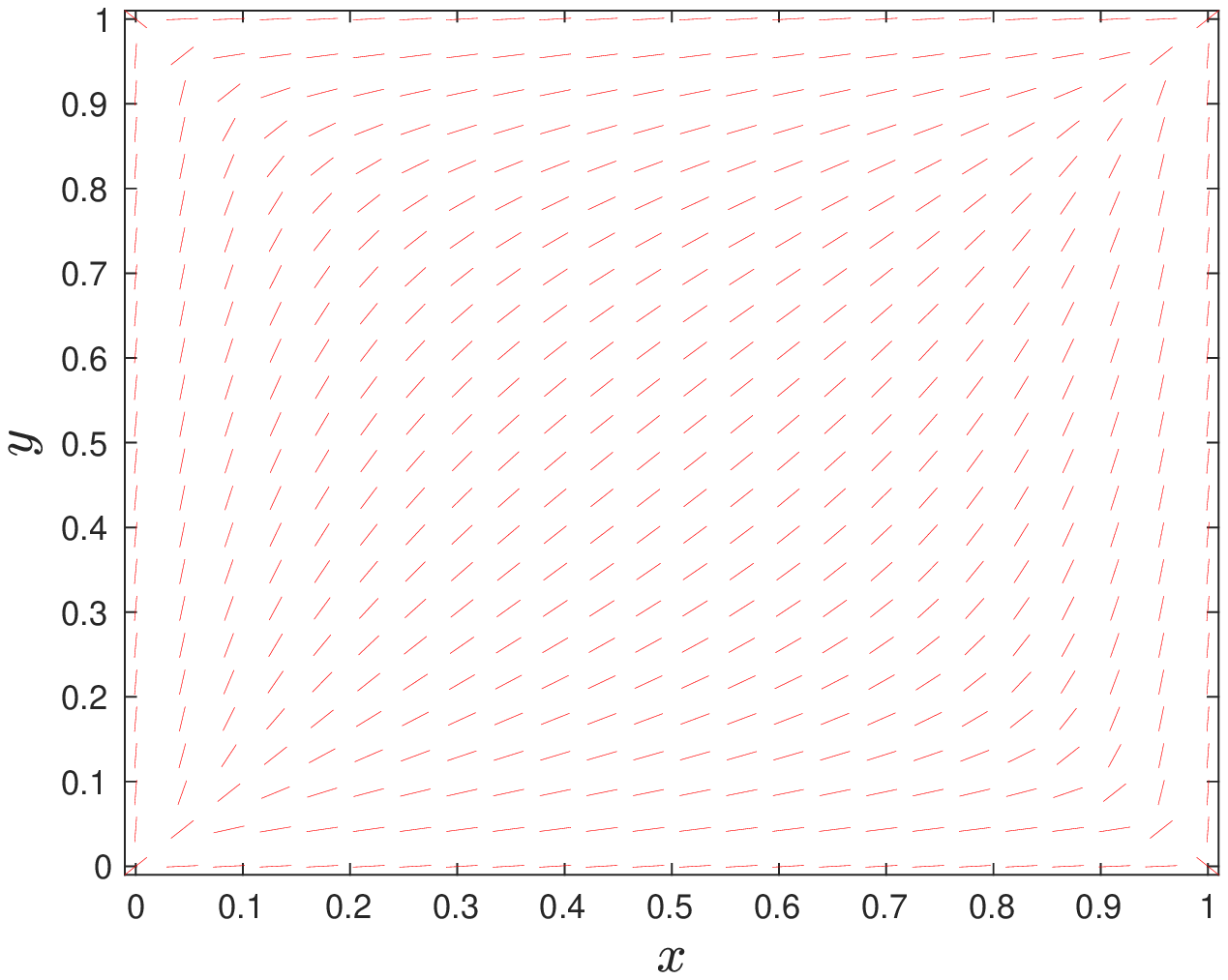}
   \label{fig:ex3_c21_vec_6}}
  \caption{(Section \ref{Elasc}) The principal eigenvector for different $c_{22}$ at the steady state. } 
  \label{fig:ex3_c21_vec}
\end{figure}

\begin{figure}[!htb]
  \centering
\subfloat[$c_{22} = -0.039$]{\includegraphics[width=0.3\textwidth,height=0.22\textwidth]{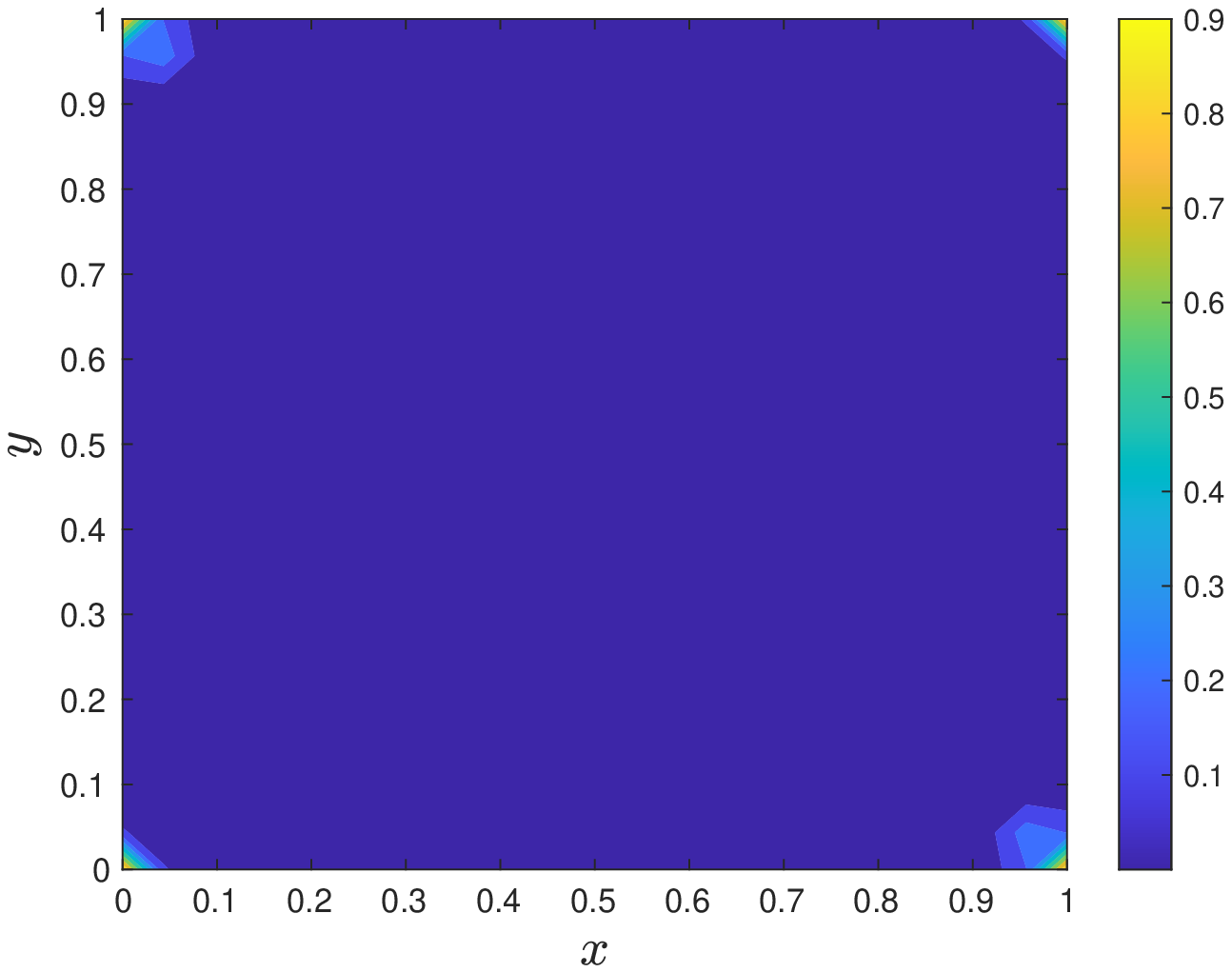}
   \label{fig:ex3_c21_11}}\hfill
\subfloat[$c_{22} = -0.02$]{\includegraphics[width=0.3\textwidth,height=0.22\textwidth]{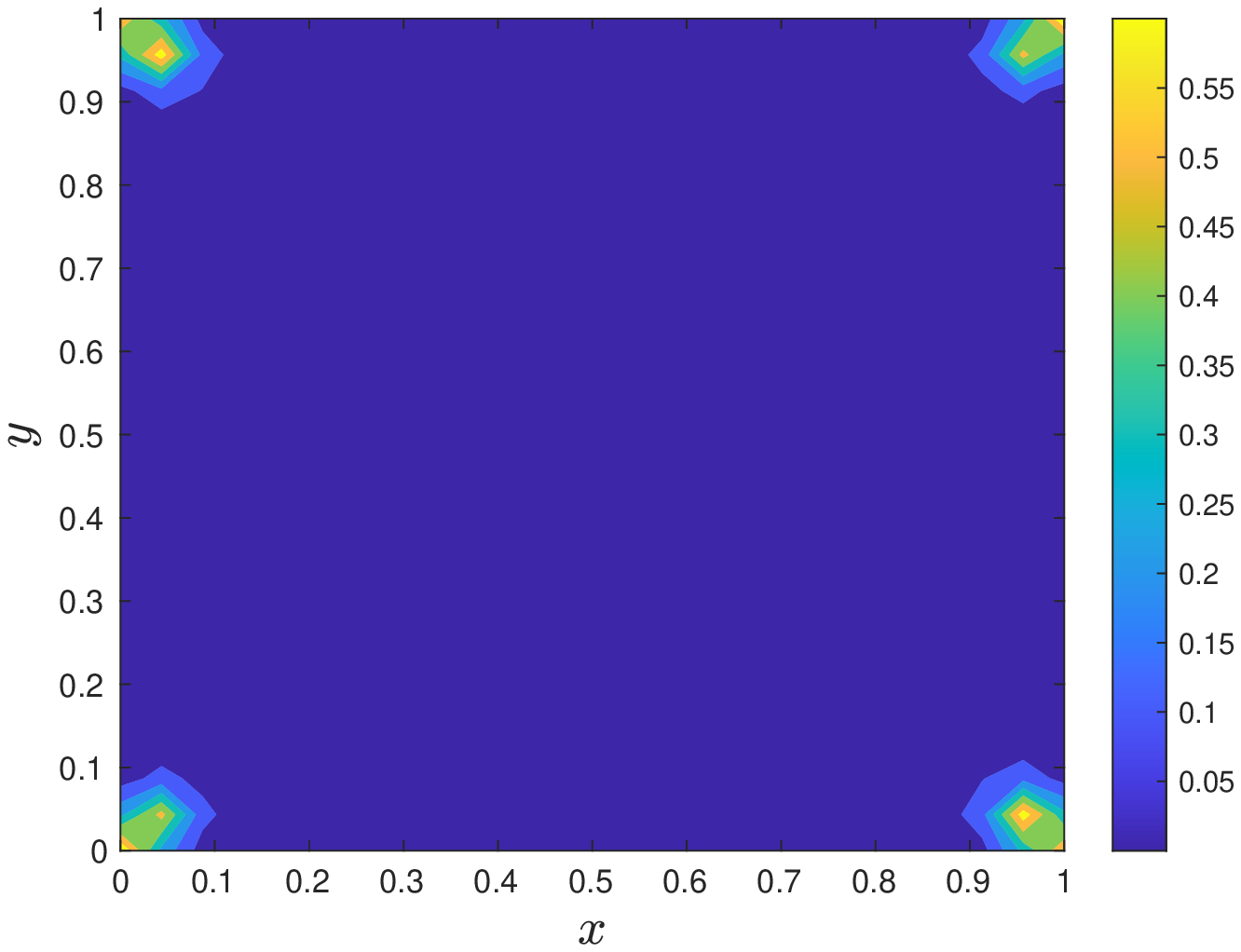}
   \label{fig:ex3_c21_12}}\hfill
   \subfloat[$c_{22} = 0.0$]{\includegraphics[width=0.3\textwidth,height=0.22\textwidth]{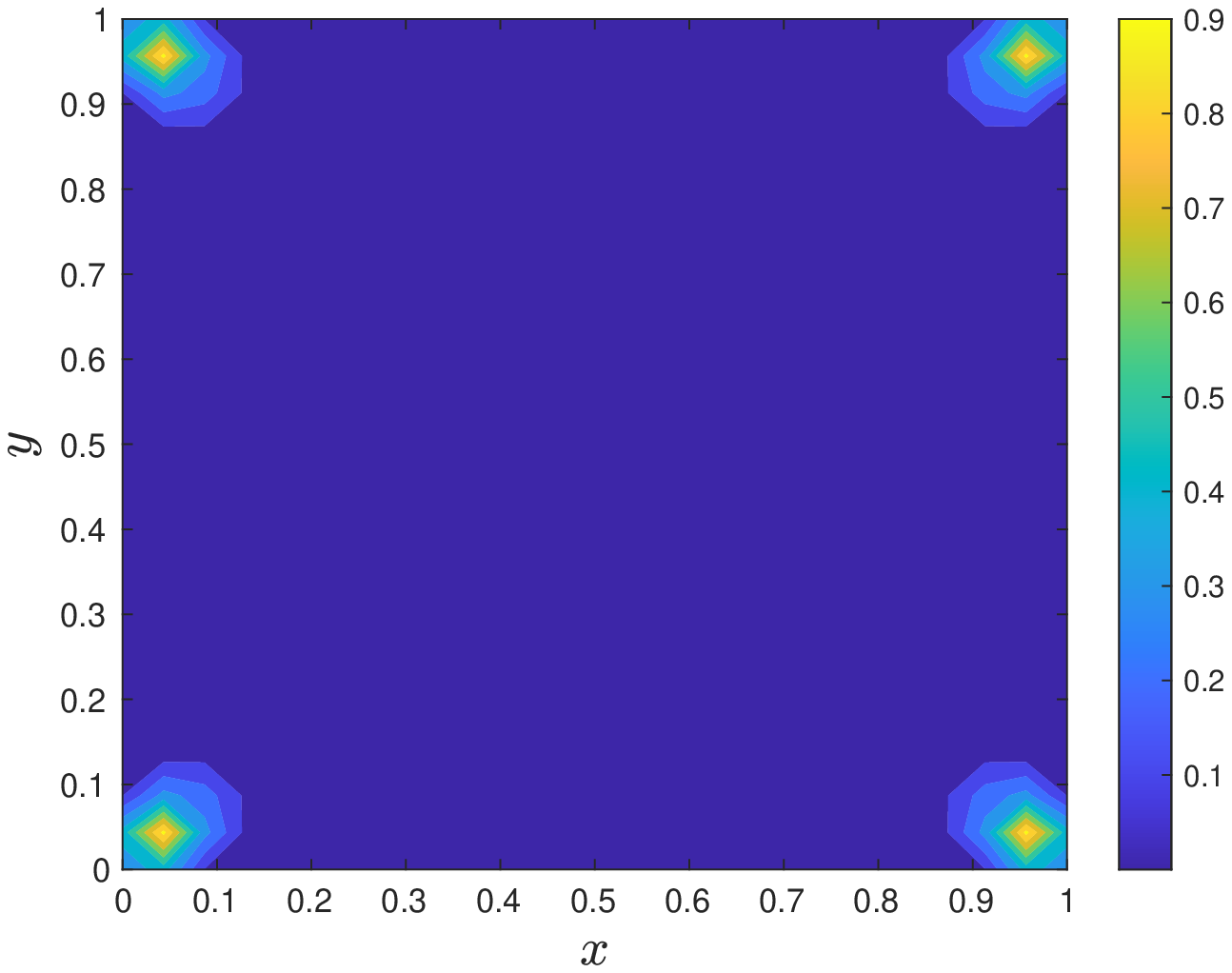}
   \label{fig:ex3_c21_13}}\\
   \subfloat[$c_{22} = 0.04$]{\includegraphics[width=0.3\textwidth,height=0.22\textwidth]{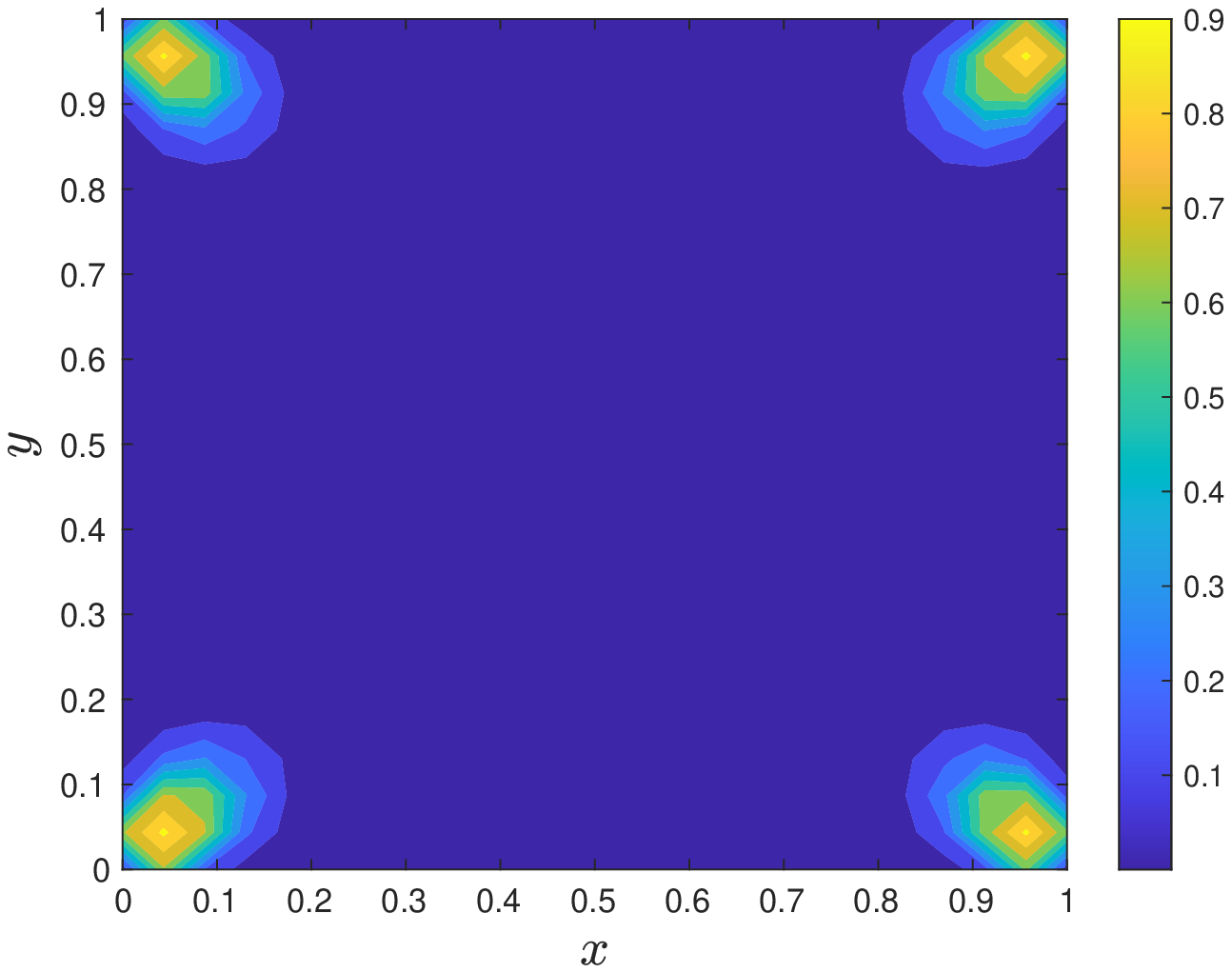}
   \label{fig:ex3_c21_14}}\hfill
   \subfloat[$c_{22} = 0.16$]{\includegraphics[width=0.3\textwidth,height=0.22\textwidth]{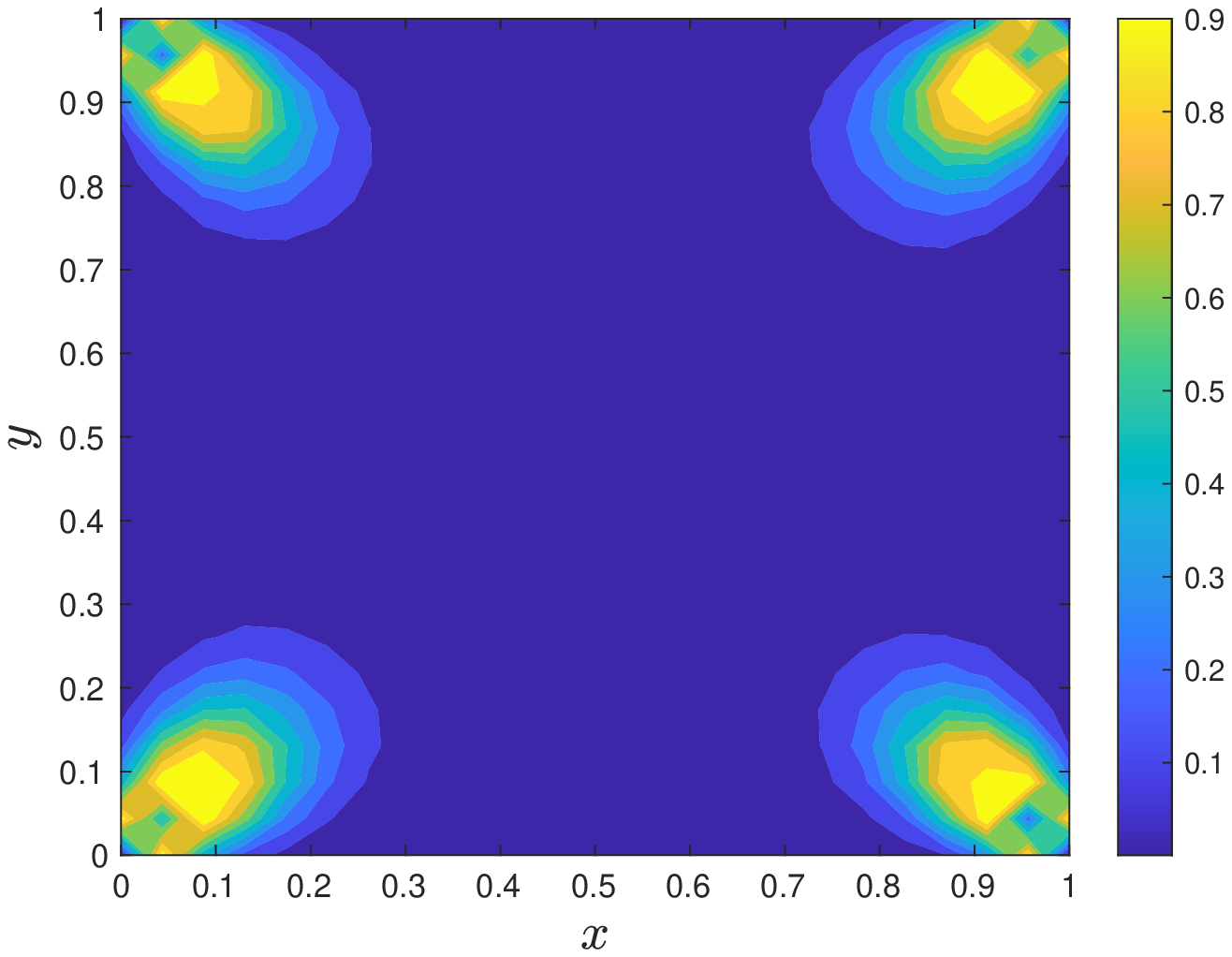}
   \label{fig:ex3_c21_15}}\hfill
   \subfloat[$c_{22} = 0.32$]{\includegraphics[width=0.3\textwidth,height=0.22\textwidth]{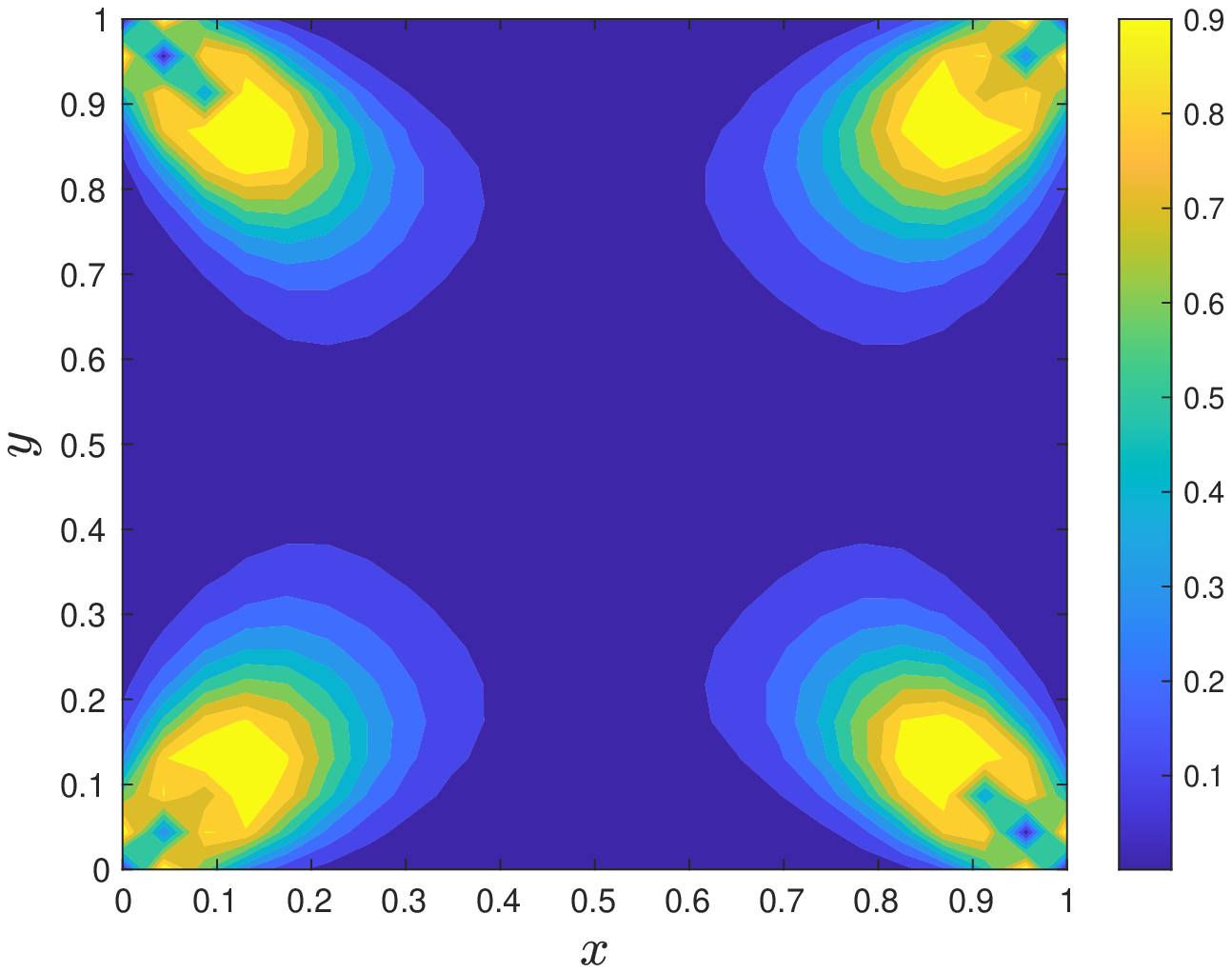}
   \label{fig:ex3_c21_16}}
  \caption{(Section \ref{Elasc}) The biaxiality for different $c_{22}$ at the steady state. } 
  \label{fig:ex3_c21_bia}
\end{figure}

\section{Conclusion}
\label{sec:conclusion}

We discuss the discretization for a gradient flow about $Q$-tensor with the quasi-entropy. 
The properties of the quasi-entropy are presented and utilized for designing the numerical schemes. 
The resulting schemes unconditionally keep physical constraints and are unconditionally uniquely solvable, for both semi discretizations in time and full discretizations. 
For the energy dissipation, the first-order-in-time schemes satisfy it unconditionally, while second-order-in-time schemes require an $O(1)$ restriction on the time step. 
Error estimates for these schemes are established. 
We validate the theoretical results with several numerical examples.

In the future, we expect to apply the approach in this paper to dissipative systems with high-order elastic energy terms and complex dissipative operators. 
They typically involve high-order tensors (such as \cite{iyer2015dynamic, Yu2010}), which have also attracted much attention. 
The approach in this paper is also suitable for the case where multiple tensors are involved and complex relations need to be met between them.

\appendix
\section{Bingham distribution and its properties}
\label{app:bingham}
We briefly introduce the Bingham term mentioned in the main text. 
Detailed discussions could be found in the literature (see, for example, \cite{Ball2010}). 

For $Q\in\mathcal{Q}_{\rm phys}$, consider the minimization problem 
\begin{align*}
    &\min \int_{S^2}\rho(\bm{m})\ln\rho(\bm{m})\dd\bm{m},\\
    &{\rm s.t. }\quad\int_{S^2}\rho(\bm{m})\dd\bm{m}=1,\quad \int_{S^2}\left(\bm{m}\otimes\bm{m}-\frac{1}{3}I\right)\rho(\bm{m})\dd\bm{m}=Q. 
\end{align*}
This problem has a unique solution of the form 
\begin{align}
    \rho(\bm{m})=\frac{1}{Z}\exp\Big(B(Q)\cdot(\bm{m}\otimes\bm{m}-\frac{1}{3}I)\Big),\qquad Z=\int_{S^2}\exp\Big(B(Q)\cdot(\bm{m}\otimes\bm{m}-\frac{1}{3}I)\Big)\dd\bm{m}, \label{bingham-dis}
\end{align}
where $B(Q)$ is a symmetric traceless $3\times 3$ matrix uniquely determined by $Q$. 
Such a density function is the Bingham distribution. 
The Bingham term in the free energy is given by taking \eqref{bingham-dis} into $\int_{S^2}\rho\ln\rho \dd \bm{m}$, giving 
\begin{align}
    \psi(Q)=B(Q)\cdot Q-\ln Z. \label{orig-ent}
\end{align}

It is not difficult to show that $\psi(Q)$ is rotationally invariant and strictly convex. 
As a result, we could focus on the case where $Q$ is diagonal, which implies that $B(Q)$ is also diagonal. 
Let us denote $Q=\mathrm{diag}(\lambda_1,\lambda_2,\lambda_3)$ and $B(Q)=\mathrm{diag}(\mu_1,\mu_2,\mu_3)$. 
If we assume $\lambda_1\geqslant \lambda_2\geqslant \lambda_3$, then it holds $\mu_1\geqslant\mu_2\geqslant\mu_3$. 
The asymptotic behavior of $\min\lambda(Q)\to (-1/3)^+$ is exactly that of $\mu_1-\mu_3\to +\infty$. 
It is shown in \cite{Ball2010} that when $\mu_1-\mu_3\to +\infty$, 
\begin{align*}
    & \lambda_3+\frac 13=\frac{C_1}{\mu_1-\mu_3}+o\left(\frac{1}{\mu_1-\mu_3}\right),\\
    & \psi(Q)=C_2\ln(\mu_1-\mu_3)+o(\mu_1-\mu_3). 
\end{align*}
Therefore, $\psi(Q)\sim -C_3\ln(\lambda_3+1/3)$. 

It is noticed that to compute \eqref{orig-ent}, it is necessary to solve $B(Q)$ first. 
However, the existing numerical approaches \cite{Luo2018, Kent1987, Kume2013, Grosso2000, jiang2021efficient} are still not able to provide a fast and accurate evaluation of $B(Q)$. 

\section{Summation by parts}
\label{app:partial_sum}
We derive the first equality in \eqref{eq:partial_sum2}. 
Substituting \eqref{eq:spatial_xy} into \eqref{eq:partial_sum}, we deduce that that 
\begin{equation}
    \label{eq:partial_sum_1}
    \sum_{P} (u D_{12}v)_{P} = 
    -\frac{1}{4h^2} \sum_{l,m=1}^{N-1} u_{l,m} (-v_{l+1, m+1} + v_{l+1, m-1} + v_{l-1, m+1} - v_{l-1, m-1}). 
\end{equation}
Substituting \eqref{eq:spatial_x1y2} into \eqref{eq:partial_sum_1}, we derive that 
\begin{align}
    \label{eq:partial_sum_2}
    & \sum_{\Gamma}(D_1 u)_{\Gamma}(D_{2}v)_{\Gamma}\nonumber\\
     = & 
    \frac{1}{4h^2} \sum_{l=0}^{N-1}\sum_{m=0}^{N-1}(-u_{l,m} -u_{l,m+1} + u_{l+1, m} + u_{l+1, m+1}) (-v_{l,m}+v_{l,m+1}-v_{l+1, m}+v_{l+1, m+1}) \nonumber\\
    =& \frac{1}{4h^2} \sum_{l=0}^{N-1}\sum_{m=0}^{N-1}u_{l,m}  (v_{l,m}-v_{l,m+1}+v_{l+1,m}-v_{l+1, m+1}) \nonumber\\
    &+\frac{1}{4h^2} \sum_{l=0}^{N-1}\sum_{m=1}^{N}u_{l,m}  (v_{l,m-1}-v_{l,m}+v_{l+1, m-1}-v_{l+1, m}) \nonumber\\
     & + \frac{1}{4h^2} \sum_{l=1}^{N}\sum_{m=0}^{N-1}u_{l,m}  (-v_{l-1,m}+v_{l-1,m+1}-v_{l, m}+v_{l, m+1}) \nonumber\\
      &+  \frac{1}{4h^2} \sum_{l=1}^{N}\sum_{m=1}^{N}u_{l,m}  (-v_{l-1,m-1}+v_{l-1,m}-v_{l, m-1}+v_{l, m})  \nonumber\\
      = &  \frac{1}{4h^2} \sum_{l=1}^{N-1}\sum_{m=1}^{N-1}u_{l,m}  (-v_{l+1, m+1} + v_{l+1, m-1} + v_{l-1, m+1} - v_{l-1, m-1})\nonumber\\
       & + \frac{1}{4h^2}\left( F_1(u,v)_{0,0} + \sum_{m=1}^{N-1}F_1(u,v)_{0,m} + \sum_{l=1}^{N-1}F_1(u,v)_{l,0}\right)  \nonumber\\ 
       & +  \frac{1}{4h^2}\left( F_2(u,v)_{0,N} + \sum_{m=1}^{N-1}F_2(u,v)_{0,m} + \sum_{l=1}^{N-1}F_2(u,v)_{l,N}\right) \nonumber\\
       & +  \frac{1}{4h^2}\left( F_3(u,v)_{N,0} + \sum_{m=1}^{N-1}F_3(u,v)_{N,m} + \sum_{l=1}^{N-1}F_3(u,v)_{l,0}\right) \nonumber\\
       & + \frac{1}{4h^2}\left( F_4(u,v)_{N,N} + \sum_{m=1}^{N-1}F_4(u,v)_{N,m} + \sum_{l=1}^{N-1}F_4(u,v)_{l,N}\right),
\end{align}
with 
\begin{equation}
    \label{eq:F_i}
    \begin{aligned}
    F_1(u, v)_{l,m} & = u_{l,m}(v_{l,m}-v_{l,m+1}+v_{l+1, m}-v_{l+1, m+1}),\\
    F_2(u, v)_{l,m} & = u_{l,m}(v_{l,m-1}-v_{l,m}+v_{l+1, m-1}-v_{l+1, m}), \\
    F_3(u, v)_{l,m} & = u_{l,m}(-v_{l-1,m}+v_{l-1,m+1}-v_{l, m}+v_{l, m+1}), \\
    F_4(u, v)_{l, m} & = u_{l,m}(-v_{l-1,m-1}+v_{l-1,m}-v_{l, m-1}+v_{l, m}). 
    \end{aligned}
\end{equation}
In each $F_i(u,v)$ in \eqref{eq:partial_sum_2}, the indices are located on the boundary. 
Since $u$ is zero on boundary nodes, the first equality in \eqref{eq:partial_sum2} has already been established.

\bibliographystyle{plain}
\bibliography{article}
\end{document}